\tikzstyle{box}=[fill={rgb,255: red,228; green,228; blue,228}, draw=black, shape=rectangle]
\tikzstyle{blankbox}=[shape=rectangle, draw=white, fill=white]
\tikzstyle{pnode}=[fill=black, draw=black, inner sep=2pt, shape=circle]
\tikzstyle{extnode}=[fill=gray, draw=black, inner sep=2pt, shape=circle]
\tikzstyle{reducible}=[->, dashed, line width=1pt]
\tikzstyle{strictreducible}=[->, line width=1pt]
\tikzstyle{nonreducible}=[->, draw=red, line width=1pt]
\tikzstyle{incomp}=[draw=red, <->, line width=1pt]
\tikzstyle{arith_sep}=[-, draw=blue]
\tikzstyle{ngroup}=[ draw=gray, thick, rounded corners, fill=gray!20, minimum height=1em] 
\newcommand\arrowchain[4]{
    \tikzset{nd/.style={minimum size=0pt, inner sep=0pt}}%
    \draw[draw=none] (#1) -- node[nd,pos=#3] (C) {} node[nd,pos=#4] (D) {} (#2);%
    \draw[dotted]  (C) -- (D);%
    \draw[->]        (#1) -- (C);%
    \draw[->]        (D) -- (#2);%
}
\providecommand{\bigsqcap}{%
\mathop{%
\mathpalette\@updown\bigsqcup
}%
}
\newcommand*{\@updown}[2]{%
\rotatebox[origin=c]{180}{$\m@th#1#2$}%
}
\theoremstyle{definition}
\newtheorem{theorem}{Theorem}[section]
\newtheorem{definition}[theorem]{Definition}
\newtheorem{corollary}[theorem]{Corollary}
\newtheorem{proposition}[theorem]{Proposition}
\newtheorem{lemma}[theorem]{Lemma}
\newtheorem{question}[theorem]{Open Question}
\newcommand{\Cantor}{{2^\mathbb{N}}}
\newcommand{\id}{\textrm{id}}
\newcommand{\dom}{\operatorname{dom}}
\newcommand{\ran}{\operatorname{ran}}
\newcommand{\Baire}{\mathbb{N}^\mathbb{N}}
\newcommand{\hide}[1]{}
\newcommand{\C}{\mathsf{C}}
\newcommand{\lpo}{\mathsf{LPO}}
\newcommand{\leqW}{\leq_{\textrm{W}}}
\newcommand{\leW}{<_{\textrm{W}}}
\newcommand{\equivW}{\equiv_{\textrm{W}}}
\newcommand{\geqW}{\geq_{\textrm{W}}}
\newcommand{\nleqW}{\nleq_\textrm{W}}
\newcommand{\leqsW}{\leq_{\textrm{sW}}}
\newcommand{\equivsW}{\equiv_{\textrm{sW}}}
\newcommand{\Sort}{\operatorname{Sort}}
\newcommand{\DS}{\mathsf{DS}}
\newcommand{\BS}{\mathsf{BS}}
\newcommand{\DSfe}{\mathsf{DS}^{\mathsf{FE}}}
\newcommand{\BSfe}{\mathsf{BS}^{\mathsf{FE}}}
\newcommand{\KL}{\mathsf{KL}}
\newcommand{\NON}{\mathsf{NON}}
\newcommand{\ACC}{\mathsf{ACC}}
\newcommand{\BStree}{\BS|_{\operatorname{Tree}}}
\newcommand{\ExtVer}{\operatorname{ExtVer}}
\newcommand{\pitaccN}{\boldsymbol{\Pi}^0_2\mathsf{-ACC}_\mathbb{N}}
\newcommand{\pitacc}[1]{\boldsymbol{\Pi}^0_2\mathsf{-ACC}_{#1}}
\def\function{\@ifstar\@function\@@function}
\newcommand{\@function}[2]{#1 \to #2}
\newcommand{\@@function}[2]{\colon #1 \to #2}
\def\mfunction{\@ifstar\@mfunction\@@mfunction}
\newcommand{\@mfunction}[2]{#1 \rightrightarrows #2}
\newcommand{\@@mfunction}[2]{\colon #1 \rightrightarrows #2}
\newcommand{\pfunction}[2]{:\subseteq #1 \to #2}
\newcommand{\pmfunction}[2]{:\subseteq #1 \rightrightarrows #2}
\newcommand{\st}{{}\,:\,{}}
\newcommand{\defiff}{:\hspace{-1mm}\iff}
\newcommand{\length}[1]{|{#1}|}
\newcommand{\concat}{\smash{\raisebox{.9ex}{\ensuremath\smallfrown} }}
\newcommand{\restrict}[1]{\ensuremath{\left. \hspace{-1mm} \right|_{#1}}}
\newcommand{\parallelization}[1]{\widehat{#1}}
\newcommand{\compproduct}{*}
\newcommand{\sequence}[2]{(#1)_{{#2}}}
\newcommand{\pairing}[1]{\langle #1 \rangle}
\newcommand{\DetPartX }[2]{\mathrm{Det}_{ \mathbf{#1} }(#2)}
\newcommand{\PiBound}{\boldsymbol{\Pi}^1_1\mathsf{-Bound}}
\newcommand{\RT}[2]{\mathsf{RT}^{{#1}}_{{#2}}}
\newcommand{\Choice}[1]{\mathsf{C}_{#1}}
\newcommand{\CNatural}{\mathsf{C}_{\mathbb{N}}}
\newcommand{\UCNatural}{\mathsf{UC}_{\mathbb{N}}}
\newcommand{\turingreducible}{\le_\mathrm{T}}
\newcommand{\weireducible}{\leqW}
\newcommand{\weiequiv}{\equivW}
\newcommand{\strongweireducible}{\leqsW}
\newcommand{\Det}[1]{\operatorname{Det}\!\left(#1\right)}
\newcommand{\fop}[1]{{}^1#1}
\newcommand{\Fin}[2]{\operatorname{Fin}_{#1}\!\left(#2\right)}
\newcommand{\cantor}{2^{<\mathbb{N}}}
\newcommand{\baire}{\mathbb{N}^{<\mathbb{N}}}
\newcommand{\prefix}{\sqsubseteq}
\newcommand{\pprefix}{\sqsubset}
\newcommand{\extend}{\sqsupseteq}
\newcommand{\mflim}{\mathsf{lim}}
\newcommand{\boldfaceDelta}{\boldsymbol{\Delta}}
\newcommand{\boldfaceSigma}{\boldsymbol{\Sigma}}
\newcommand{\boldfacePi}{\boldsymbol{\Pi}}
\newcommand{\boldfaceGamma}{\boldsymbol{\Gamma}}
\newcommand{\SDDCC}{\boldfaceSigma^1_1\mathsf{-DUCC}}
\newcommand{\codedBS}[1]{\ifthenelse{\equal{#1}{}}{\mathsf{BS}}{ {#1}\text{-}\mathsf{BS} }}
\newcommand{\codedDS}[1]{\ifthenelse{\equal{#1}{}}{\mathsf{DS}}{ {#1}\text{-}\mathsf{DS} }}
\def\blfootnote{\xdef\@thefnmark{}\@footnotetext}
\begin{document}

\title{The weakness of finding descending sequences in ill-founded linear orders}

\author{
Jun Le Goh
\institute{National University of Singapore, Singapore}
\email{gohjunle@nus.edu.sg}
\and
Arno Pauly
\institute{Department of Computer Science\\Swansea University, Swansea, UK\\}
\email{Arno.M.Pauly@gmail.com}
\and
Manlio Valenti
\institute{Department of Computer Science\\Swansea University, Swansea, UK\\}
\email{manliovalenti@gmail.com}
}

\def\titlerunning{The weakness of $\DS$}
\def\authorrunning{J.L.~Goh, A.~Pauly, M.~Valenti}
\maketitle

\vspace*{-3ex}

\begin{abstract}
	We explore the Weihrauch degree of the problems ``find a bad sequence in a non-well quasi order'' ($\BS$) and ``find a descending sequence in an ill-founded linear order'' ($\DS$). We prove that $\DS$ is strictly Weihrauch reducible to $\BS$, correcting our mistaken claim in \cite{goh-pauly-valenti}. This is done by separating their respective first-order parts. On the other hand, we show that $\BS$ and $\DS$ have the same finitary and deterministic parts, confirming that $\BS$ and $\DS$ have very similar uniform computational strength. We prove that K\"onig's lemma $\KL$ and the problem $\mathsf{wList}_{\Cantor,\leq\omega}$ of enumerating a given non-empty countable closed subset of $\Cantor$ are not Weihrauch reducible to $\DS$ or $\BS$, resolving two main open questions raised in \cite{goh-pauly-valenti}. We also answer the question, raised in \cite{dghpp20}, on the existence of a ``parallel quotient'' operator, and study the behavior of $\BS$ and $\DS$ under the quotient with some known problems.
\end{abstract}

\blfootnote{\emph{2020 Mathematics Subject Classification.} Primary 03D30; Secondary 03D78, 06A75.}
\blfootnote{\emph{Key words and phrases.} Weihrauch reducibility, linear orders, quasi-orders.} 

\blfootnote{We are grateful to Takayuki Kihara for pointing out the mistake in our previous article. We also thank C\'ecilia Pradic for comments which greatly improved the presentation of some results. We thank the anonymous reviewers for their careful  reading of the paper. This paper extends the conference paper \cite{gohpaulyvalenti2-cie}.}

\vspace*{-3ex}

\tableofcontents
\section{Introduction}

Linear and quasi-orders are ubiquitous structures that play an important role in all areas of mathematics. Computability theory has been successfully applied to highlight the difference between classical and effective properties of orders. A well-known example is that the simple fact that every infinite linear order contains an infinite ascending sequence or an infinite descending sequence is not computably true. A more extreme example is the existence of computable ill-founded linear orders with no hyperarithmetic descending sequence (see e.g.\ \cite[Lemma III.2.1]{SacksHRT}). These orders have been extensively used and studied in reverse mathematics (under the name of pseudo-well-orders) \cite{Simpson09}. We refer the reader to \cite{Downey98} for a more comprehensive presentation of computability-theoretical results on linear and partial orders. 

The natural generalization of well-orders in the context of quasi-orders is the notion of \emph{well quasi-orders}: formally, a quasi-order $(Q,\preceq)$ is called well quasi-order (abbreviated wqo) if, for every infinite sequence $\sequence{q_n}{n\in\mathbb{N}}$ of elements of $Q$, there are $i,j$ with $i<j$ such that $q_i\preceq_Q q_j$. This can be restated by saying that a quasi-order is a wqo if it contains no infinite bad sequences, where a (possibly finite) sequence $\sequence{q_n}{n}$ is called bad if $q_i \not\preceq_Q q_j$ for every $i<j$. Equivalently, wqo's can be defined as quasi-orders that contain no infinite descending sequence and no infinite antichain. There is an extensive literature on the theory of wqo's. For an overview, we refer the reader to \cite{MarconeWQO}.

\smallskip

In this paper, we continue our investigation (started in \cite{goh-pauly-valenti}) of the uniform computational strength of the problems:
\begin{itemize}
	\item given a countable ill-founded linear order, find an infinite Descending Sequence in it ($\DS$), and
	\item given a countable non-well quasi-order, find a Bad Sequence in it ($\BS$).
\end{itemize}
A suitable framework for this is given by Weihrauch reducibility (see \cite{bgp21} for a self-contained introduction). Several results on the Weihrauch degree of $\DS$ were proved in our previous work, where we showed e.g.\ that, despite the fact that $\DS$ is not a hyperarithmetic problem, it is rather weak from the uniform point of view, as it only computes the limit computable functions. In the language of Weihrauch reducibility, this can be stated as $\Det{\DS}\weiequiv\mflim$. We also explored how the uniform computational strength changes when working with $\boldfaceGamma$-presented orders, where $\boldfaceGamma$ is a Borel pointclass, or $\boldfaceDelta^1_1$, $\boldfaceSigma^1_1$, or $\boldfacePi^1_1$. Other results on the Weihrauch degree of principles related to well-orders and wqo are obtained in \cite{carluccimainardizdanowski}. The computational strength of descending sequences was also independently explored in \cite[\S 4]{calvertfranklinturetsky}.

This paper is organized as follows: in Section~\ref{sec:background}, we introduce the notation and provide a brief overview of the necessary background notions on Weihrauch reducibility. 

In Section~\ref{sec:separation}, we refute the claim, falsely stated in \cite[Proposition 4.5]{goh-pauly-valenti}, that $\DS\weiequiv\BS$. In fact, we obtain the separation $\DS\leW\BS$ (\thref{theo:bsdssep}) by proving a separation between the respective first-order parts.

The following is a list of the results in \cite{goh-pauly-valenti} that are affected:
\begin{itemize}
	\item \cite[Proposition 4.5]{goh-pauly-valenti};
	\item \cite[Corollary 5.4]{goh-pauly-valenti} and \cite[Corollary 5.16]{goh-pauly-valenti}: these are one-line relativizations of \cite[Proposition 4.5]{goh-pauly-valenti};
	\item \cite[Corollary 5.13]{goh-pauly-valenti}: the equivalences $\codedDS{\boldfacePi^0_k} \weiequiv \codedDS{\boldfaceDelta^0_{k+1}} \weiequiv \codedDS{\boldfaceSigma^0_{k+1}}$ are unaffected, but the reductions involving $\codedBS{\boldfacePi^0_k}$ and $\codedBS{\boldfaceDelta^0_{k+1}}$ were obtained using \cite[Corollary 5.4]{goh-pauly-valenti} and transitivity.
\end{itemize}

We are not aware whether all the above-mentioned claims admit a counterexample. 

All the other results in \cite{goh-pauly-valenti} do not use \cite[Proposition 4.5]{goh-pauly-valenti}; they either deal only with $\DS$ or are standalone results about $\BS$ which are not affected by the above error. In particular, \cite[Theorem 5.3]{goh-pauly-valenti}, \cite[Theorem 5.14]{goh-pauly-valenti}, \cite[Proposition 5.15]{goh-pauly-valenti}, \cite[Theorem 5.23]{goh-pauly-valenti} and \cite[Corollary 5.24]{goh-pauly-valenti} are correct to the best of our knowledge. This list of errata is also available in the arxiv version of \cite{goh-pauly-valenti}.

The rest of the paper is devoted to better understanding the Weihrauch degrees of $\DS$ and $\BS$. More precisely, in Section~\ref{sec:klbs}, we answer (negatively) two main open questions raised in \cite[Questions 6.1 and 6.2]{goh-pauly-valenti}, namely whether $\KL$ and $\mathsf{wList}_{2^{\mathbb{N}},\leq\omega}$ are Weihrauch reducible to $\DS$ (Corollaries \ref{cor:KL_DS} and \ref{cor:wList_DS}). Both results follow from a more general characterization (\thref{theo:dsproducts}), stating that 
\[  \mflim \equivW \max_{\leqW}\{f \st \parallelization{\ACC_\mathbb{N}} \times f \leqW \DS\}\equivW \max_{\leqW}\{f \st \parallelization{\ACC_\mathbb{N}} \times f \leqW \BS\}. \] 
In other words, even though $\parallelization{\ACC_\mathbb{N}}$ is fairly weak
(in particular it is below $\mflim$, $\KL$ and $\DS$), neither $\DS$ nor 
$\BS$ can compute $\parallelization{\ACC_\mathbb{N}} \times f$ if $f \nleqW \mflim$.

In Section~\ref{sec:fin_BS_DS}, we show that, despite $\DS$ and $\BS$ not being Weihrauch equivalent, their finitary and deterministic parts are in fact the same (\thref{theo:finkbs} and \thref{cor:det(bs)}). In other words, it is necessary to consider a non-finitary, non-deterministic problem (such as $\fop{\BS}$) in order to separate $\BS$ and $\DS$.

Section~\ref{sec:godel} contains a short detour where we answer another question that was left open in our previous paper (\cite[Question 3.5]{goh-pauly-valenti}), namely whether the first-order part and the deterministic part commute. We answer this question by constructing an explicit counterexample for which the two operations do not commute (\thref{thm:fop_Det_below_Det_fop}). 

In Sections~\ref{sec:pitaccn}~and~\ref{sec:antichain}, we analyze the following question: Is the restriction of $\BS$ to trees stronger than $\DS$? This question is very natural when considering that, when proving that $\BS\not\weireducible \DS$, we construct a ``tree-like structure'' (a partial order admitting a tree decomposition, see \thref{def:treedecomposition}). In Section~\ref{sec:pitaccn} we obtain some technical results on the degrees of the problems $\pitacc{k}$ and $\pitaccN$. In Section~\ref{sec:antichain}, we show that the restriction $\BStree$ of $\BS$ to trees with infinite width is much weaker than $\BS$ by proving that $\Fin{}{\BStree} \equivW \Det{\BStree} \equivW \id$ (\thref{cor:fin_det_bstree}) and that $\fop{\BStree}$ is equivalent to the problem that maps a tree with infinite width to some $v\in T$ that belongs to some infinite antichain in $T$ (\thref{thm:fop_bstree}). 

Finally, in Section~\ref{sec:quotient} we observe that one of our main technical tools (\thref{theo:dsproducts}) provides an example of a ``parallel quotient'' \cite[Remark 1]{dghpp20}, namely a specific case where 
\[ \max_{\leqW} \{h \st h \times g \leqW f\} \]
is defined. Whether this $\max$ exists for all choices of $f$ and $g$ was asked in \cite[Remark 3.11]{dghpp20}. We answer their question by showing that this operator is always defined when $g\neq\emptyset$ (\thref{def:quotient} and \thref{thm:quotient_total}). We conclude the paper by analyzing the behavior of $\BS$ and $\DS$ under the quotient with some known problems.

\section{Background}
\label{sec:background}

We now briefly introduce the relevant notions in Weihrauch complexity. For a more thorough presentation, we refer the reader to \cite{bgp21}. We end this section with the observation that $\BS$ is equivalent to its restriction to partial orders (\thref{prop:BS_qo_po}).

A \emph{represented space} $\mathbf{X} = (X,\delta_\mathbf{X})$ consists of a set $X$ and a (possibly partial) surjection $\delta_\mathbf{X}\pfunction{\Baire}{X}$.
Many spaces of interest can be represented in standard ways, such as:
$\Baire$,
$\mathbb{N}$,
$\mathbb{N}^{<\mathbb{N}}$,
initial segments of $\mathbb{N}$,
the set of binary relations on $\mathbb{N}$,
the set of $\boldfaceGamma$-definable subsets of $\mathbb{N}$ where $\boldfaceGamma$ is a pointclass in the arithmetic or projective hierarchy,
countable Cartesian products and countable disjoint unions of represented spaces. For the formal definitions of the representation maps of these spaces, we refer the reader to \cite{bgp21}, or to our previous paper \cite{goh-pauly-valenti}.

A \emph{problem} $f$ is a (possibly partial) multivalued function between represented spaces $\mathbf{X}$ and $\mathbf{Y}$, denoted $f: \subseteq \mathbf{X} \rightrightarrows \mathbf{Y}$.
For each $x \in X$, $f(x)$ denotes the set of possible outputs (i.e., \emph{$f$-solutions}) corresponding to the input $x$.
The \emph{domain} $\dom(f)$ is the set of all $x \in X$ such that $f(x)$ is non-empty.
Such an $x$ is called an \emph{$f$-instance}.
If $f(x)$ is a singleton for all $x \in \dom(f)$,
we say $f$ is \emph{single-valued} and write $f: \subseteq \mathbf{X} \to \mathbf{Y}$.
In this case, if $y$ is the $f$-solution to $x$,
we write $f(x) = y$ instead of (the formally correct) $f(x) = \{y\}$. A function $F: \subseteq \Baire \to \Baire$ is a \emph{realizer} for a problem $f$ if whenever $p$ is a name for some $x \in \dom(f)$, then $F(p)$ is a name for an $f$-solution to $x$. We say a problem is \emph{computable} (resp.\ \emph{continuous}) if it has a computable (resp.\ continuous) realizer.

A problem $f$ is \emph{Weihrauch reducible} to a problem $g$,
written $f \leqW g$,
if there are computable maps $\Phi, \Psi: \subseteq \Baire \to \Baire$ such that if $p$ is a name for some $x \in \dom(f)$, then
\begin{enumerate}
	\item $\Phi(p)$ is a name for some $y \in \dom(g)$, and
	\item if $q$ is a name for some $g$-solution of $y$,
	then $\Psi(p,q)$ is a name for some $f$-solution of $x$.
\end{enumerate}
If $\Phi$ and $\Psi$ satisfy the above, we say that $f \leqW g$ \emph{via $\Phi$, $\Psi$}. We sometimes refer to $\Phi$ and $\Psi$ as the \emph{forward} and \emph{backward} functionals respectively. We say that $f$ is \emph{strongly Weihrauch reducible} to $g$, written $f \leqsW g$, if the above holds but with $\Psi(p,q)$ replaced by $\Psi(q)$ in (2). For us, strong Weihrauch reducibility is only of occasional technical interest.

Weihrauch reducibility forms a preorder on problems.
We say $f$ and $g$ are \emph{Weihrauch equivalent}, written $f \equivW g$,
if $f \leqW g$ and $g \leqW f$.
The $\equivW$-equivalence classes (\emph{Weihrauch degrees}) are partially ordered by $\leqW$.
Among the numerous algebraic operations in the Weihrauch degrees, we consider:
\begin{itemize}
	\item for problems $f_i: \subseteq \mathbf{X}_i \rightrightarrows \mathbf{Y}_i$, the \emph{parallel product}
	\[ f_0 \times f_1: \subseteq \mathbf{X}_0 \times \mathbf{X}_1 \rightrightarrows \mathbf{Y}_0 \times \mathbf{Y}_1 \text{ defined by } (x_0,x_1) \mapsto f_0(x_0) \times f_1(x_1), \]
	i.e., given an $f_0$-instance and an $f_1$-instance, solve both;
	\item for a problem $f: \subseteq \mathbf{X} \rightrightarrows \mathbf{Y}$, the \emph{(infinite) parallelization}
	\[ \parallelization{f}: \subseteq \mathbf{X}^\mathbb{N} \rightrightarrows \mathbf{Y}^\mathbb{N} \text{ defined by } (x_i)_i \mapsto \prod_i\nolimits f(x_i), \]
	i.e., given a countable sequence of $f$-instances, solve all of them.
\end{itemize}
These operations are defined on problems,
but they all lift to the Weihrauch degrees.
Parallelization even forms a closure operator, i.e.,
$f \leqW \parallelization{f}$,
$f \leqW g$ implies $\parallelization{f} \leqW \parallelization{g}$,
and $\parallelization{\parallelization{f}} \equivW \parallelization{f}$. We also briefly mention two further operations on Weihrauch degrees, respectively called \emph{compositional product} and \emph{implication}, characterized by $f \compproduct g = \max_{\leqW} \{F \circ G \st F \leqW f \wedge G \leqW g\}$ and $f \rightarrow g = \min_{\leqW} \{h \st g \leqW f \compproduct h\}$. Both are total operations. The compositional product can be equivalently described as follows: let $\sequence{\Phi_w}{w\in\Baire}$ be an effective enumeration of all partial continuous functionals $\Baire\rightarrow\Baire$ with $G_\delta$ domain. For every $f\pmfunction{\mathbf{X}}{\mathbf{Y}}$, $g\pmfunction{\mathbf{U}}{\mathbf{V}}$, the domain of $f\compproduct g$ is the set
\[ \{ (w,u) \in \Baire \times \dom(g) \st (\forall v \in g(u))(\forall p_v\in \delta_{\mathbf{V}}^{-1}(v))(\delta_{\mathbf{X}}\Phi_w(p_v)\in \dom(f))\}.\]
Given as input a pair $(w,u)$, $f\compproduct g$ produces a pair $(y,v)$ with $v\in g(u)$ and $y\in f\delta_{\mathbf{X}}\Phi_w(p_v) $ for some $p_v\in\delta_{\mathbf{V}}^{-1}(v)$. If $f,g$ are problems on the Baire space, $f\compproduct g$ is simply the problem that maps $(w,u)$ to a pair $(y,v)$ with $v\in g(u)$ and $y\in f(v)$. For details, we refer to \cite{paulybrattka4, Westrick20diamond}.

The Weihrauch degrees also support a number of interior operators,
which have been used to separate degrees of interest (see e.g.\ \cite[\S 3.1]{soldavalenti}).
For any problem $f$ and any represented space $\mathbf{X}$, the problem
\[ \DetPartX{X}{f} := \max_{\leqW}\nolimits \{g \leqW f \st g \text{ has codomain }\mathbf{X}\text{ and is single-valued}\} \]
exists \cite[Theorem 3.2]{goh-pauly-valenti}.
We call $\DetPartX{\Baire}{f}$ the \emph{deterministic part of $f$} and denote it by $\Det{f}$ for short. We say that a problem is \emph{deterministic} if $f\weiequiv \Det{f}$, i.e.~if $f$ is Weihrauch equivalent to a single-valued function on Baire space. This terminology is motivated by the observation that the treatment of multi-valued functions in computable analysis inherently corresponds to a non-deterministic computation point of view.

Observe that \cite[Proposition 3.6]{goh-pauly-valenti} can be generalized slightly:

\begin{proposition} \thlabel{prop:Det_below_parallel_Det_2}
$\Det{f} \leqW \parallelization{\DetPartX{2}{f}}$. ($\mathbf{2}$ is the two-point space with the discrete topology.)
\end{proposition}
\begin{proof}
Suppose $g$ is single-valued, has codomain $\Baire$, and $g \leqW f$.
Define a single-valued problem $h$ with codomain $\textbf{2}$ as follows:
Given $n,m \in \mathbb{N}$ and a $g$-instance $x$,
produce $1$ if $g(x)(n) \geq m$, otherwise produce $0$.
It is easy to see that $g \leqW \parallelization{h}$ and $h \leqW f$.
The latter implies $h \leqW \DetPartX{2}{f}$ and so $g \leqW \parallelization{h} \leqW \parallelization{\DetPartX{2}{f}}$.
\end{proof}

For any problem $f$ and $\mathbf{X} = \mathbb{N}$ or $\mathbf{k}$,
it is also known that
\[ \max_{\leqW}\nolimits \{g \leqW f \st g \text{ has codomain }\mathbf{X}\} \]
exists. For $\mathbf{X} = \mathbb{N}$ we call it the \emph{first-order part of $f$} \cite[Theorem 2.2]{DSY23}, denoted by $\fop{f}$,
while for $\mathbf{X} = \mathbf{k}$ we call it the \emph{$\mathbf{k}$-finitary part of $f$} \cite[Proposition 2.9]{paulycipriani1},
denoted by $\Fin{k}{f}$. We say that a problem is \emph{first-order} if $f\weiequiv \fop{f}$. 
It is immediate from the definitions that $\DetPartX{\mathbb{N}}{f} \leqW \fop{f}$ and $\DetPartX{k}{f} \leqW \Fin{k}{f} \leqW \fop{f}$.

Before providing examples of problems of interest, we shall specify our notation for sequences and trees. A string or sequence is a function $\sigma$ from an initial segment of $\mathbb{N}$ to $\mathbb{N}$, with length denoted by $|\sigma|$. The set of finite (resp.\ infinite) strings of natural numbers is denoted by $\baire$ (resp.\ $\Baire$). Likewise, we use $n^{<\mathbb{N}}$ and $n^\mathbb{N}$ respectively for the sets of finite or infinite strings with range in $\{0,\hdots,n-1\}$. In particular, the sets of finite and infinite binary strings are denoted $\cantor$ and $\Cantor$ respectively.

The constant sequence with value $n$ and length $i$ is denoted by $n^i$. For $i = 0$, this is the empty string $\varepsilon$. We use $n^\omega$ for the constant infinite string with value $n$. The concatenation of strings $\sigma$ and $\tau$ is usually denoted by $\sigma\concat\tau$. For the sake of readability, we omit the concatenation symbol whenever we are concatenating constant strings (e.g.\ we use $0^i1^j$ in place of $0^i\concat 1^j$). The prefix relation on strings is denoted by $\sqsubseteq$.  

For each $p \in \Baire$ and $i \in \mathbb{N}$, $p[i]$ denotes the string obtained by restricting $p$ to $i$. A subtree of $\baire$ ($\cantor$ resp.) is a subset $T \subseteq \baire$ ($\cantor$ resp.) which is closed downwards under prefix. We picture trees growing upwards, i.e., the root $\varepsilon$ lies at the bottom. 

We now list some well-studied computational problems that will be useful.

\begin{itemize}
    \item $\lpo$: Given $p \in \Cantor$, produce $1$ if there is $k\in\mathbb{N}$ such that $p(k) = 1$, otherwise produce $0$.
    \item $\Choice{k}$: Given $p\in (k+1)^\mathbb{N}$ such that $(\exists n<k)(n+1\notin\ran(p))$, find $n<k$ such that $n+1\notin\ran(p)$.
    \item $\CNatural$: Given $p\in\Baire$ such that $(\exists n)(n+1\notin\ran(p))$, find~$n$ such that $n+1\notin\ran(p)$.
    \item $\mflim$: Given a convergent sequence $\sequence{p_n}{n\in\mathbb{N}}$ in $\Baire$, find its limit.
    \item $\KL$: Given an infinite finitely branching subtree of $\mathbb{N}^{<\mathbb{N}}$, find an infinite path through it.
\end{itemize}

The problems $\Choice{k}$ and $\CNatural$ are examples of \emph{choice problems}, as they can be rephrased as ``given a non-empty co-c.e.\ closed subset of $\mathbf{k}$ (resp.\ $\mathbb{N}$), find a point in it''. Choice problems are pivotal in the study of Weihrauch reducibility, as they provide a useful benchmark to describe the Weihrauch degrees of other problems. For this reason, many variations of choice problems have been introduced in the literature. For example, we denote with $\UCNatural$ the restriction of $\CNatural$ to co-c.e.\ closed singletons (or, equivalently, to sequences $p$ such that there is exactly one $n$ such that $n+1\notin\ran(p)$). It is known that $\CNatural\weiequiv \UCNatural$ (\cite[Theorem 3.8]{BolWei11}), hence $\CNatural$ is a deterministic, first-order problem. Another variant that plays an important role in this paper is \emph{all or co-unique choice}: if $\mathbf{X}$ is $\mathbf{k}$ or $\mathbb{N}$, $\ACC_\mathbf{X}$ is the problem ``Given an enumeration of a set $A \subseteq \mathbf{X}$ of size at most $1$, find a number not in $A$''. It is known that $\parallelization{\ACC_\mathbb{N}} \leW \mflim \leW \KL$ (see e.g.\ \cite{bgp21}).  

Another important family of problems comes from Ramsey's theorem for $n$-tuples and $k$-colors. In this paper, we only deal with colorings of the natural numbers. We define $\RT{1}{k}$, a.k.a.\ the pigeonhole principle, as the total multi-valued function that maps a coloring $c\function{\mathbb{N}}{k}$ to the set of all infinite $c$-homogeneous sets, i.e.\ the set of all infinite $H\subseteq \mathbb{N}$ such that $c$ is constant on $H$.  Many well-known facts about the Weihrauch degrees of Ramsey principles can be found in \cite{BRramsey17}. We only mention that (in terms of Weihrauch reducibility) we can equivalently think of $\RT{1}{k}$ as the problem that, given $c$, produces \emph{the color} of a $c$-homogeneous solution. It is known that $j<k$ implies $\RT{1}{j}\leW \RT{1}{k}$ and that $\parallelization{\RT{1}{2}}\weiequiv \KL$. In particular, $\KL$ is parallelizable (as the parallelization is a closure operator).

To study the problems $\DS$ and $\BS$ from the point of view of Weihrauch reducibility, we need to introduce the represented spaces of linear orders and quasi-orders. We only work with countable linear orders/quasi-orders with domain contained in $\mathbb{N}$. We represent a linear order $(L,\le_L)$ with the characteristic function of the set $\{ \pairing{n,m} \st n\le_L m\}$. Likewise, we represent a quasi-order $(Q,\preceq_Q)$ with the characteristic function of the set $\{ \pairing{n,m} \st n\preceq_Q m\}$. 

We conclude this section by observing a fact about $\BS$ which was implicit in \cite{goh-pauly-valenti}.

\begin{proposition} \thlabel{prop:BS_qo_po}
$\BS$ is Weihrauch equivalent to its restriction to partial orders.
\end{proposition}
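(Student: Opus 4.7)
My plan is to prove the non-trivial direction, namely that $\BS$ Weihrauch-reduces to its restriction to partial orders; the reverse reduction is immediate, since every partial order is a quasi-order and the notion of bad sequence coincides in both cases.

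Given a quasi-order $(Q, \preceq_Q)$ with domain contained in $\mathbb{N}$, I would let the forward functional produce the binary relation $\leq_P$ on $Q$ defined by
\[ n \leq_P m \defiff n \preceq_Q m \text{ and } (m \not\preceq_Q n \text{ or } n \leq m), \]
where the final ``$\leq$'' is the usual order on $\mathbb{N}$. The idea is to use $\mathbb{N}$'s natural order to break ties within each $\sim_Q$-equivalence class, thus forcing antisymmetry while keeping the relation uniformly computable from $\preceq_Q$. I would verify that $\leq_P$ is a partial order: reflexivity is immediate; antisymmetry follows because $n \leq_P m$ together with $m \leq_P n$ forces $n \sim_Q m$ and then $n \leq m \leq n$ in $\mathbb{N}$; for transitivity, given $n \leq_P m \leq_P k$, either $k \not\preceq_Q n$ (the right disjunct of the definition is then immediately satisfied) or $k \preceq_Q n$, in which case all three of $n, m, k$ are $\sim_Q$-equivalent and $\leq_P$ among them reduces to the standard order on $\mathbb{N}$, so $n \leq m \leq k$ as required. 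Any bad sequence in $(Q, \preceq_Q)$ remains bad under $\leq_P$ because $n \leq_P m$ implies $n \preceq_Q m$; hence whenever $Q$ is a non-wqo, $(Q, \leq_P)$ is a non-well partial order and a valid instance.

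For the backward functional, given a bad sequence $(p_n)_n$ in $(Q, \leq_P)$, I would greedily extract a subsequence $(p_{n_k})_k$ that is strictly increasing in $\mathbb{N}$: reflexivity of $\leq_P$ forces the $p_n$ to be pairwise distinct, so $\{p_n : n \in \mathbb{N}\}$ is unbounded, and at each stage the procedure computably waits for the least $n_{k+1} > n_k$ with $p_{n_{k+1}} > p_{n_k}$. For $k < l$, badness yields $p_{n_k} \not\leq_P p_{n_l}$, and combined with $p_{n_k} < p_{n_l}$ in $\mathbb{N}$ the defining clause of $\leq_P$ rules out $p_{n_k} \preceq_Q p_{n_l}$; hence $(p_{n_k})_k$ is a bad sequence in $(Q, \preceq_Q)$.

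The only subtle observation is that tying the tie-breaking within $\sim_Q$-classes to the ambient order of $\mathbb{N}$ simultaneously secures a uniformly computable partial order and enables a computable extraction of a bad $Q$-sequence from a bad $P$-sequence, sidestepping the general (non-computable) task of finding a monotone subsequence in an arbitrary sequence of naturals.
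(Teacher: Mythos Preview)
Your proof is correct, but it takes a different route from the paper's. The paper instead computes the set $S := \{a \in Q \st (\forall b <_\mathbb{N} a)(a \not\preceq_Q b \text{ or } b \not\preceq_Q a)\}$ of $\mathbb{N}$-least representatives of the $\sim_Q$-classes and observes that $(S,\preceq_Q)$ is already a partial order (isomorphic to $Q/\!\sim_Q$), so the backward functional is simply the identity: a bad sequence in $S$ is literally a bad sequence in $Q$. Your construction keeps the full domain and refines the order inside each $\sim_Q$-class using the ambient order on $\mathbb{N}$, which then forces a non-trivial backward step (extracting an $\mathbb{N}$-increasing subsequence). Both arguments hinge on the same idea of using $\mathbb{N}$ to resolve the failure of antisymmetry; the paper's version just shifts the work to the forward side and gets a cleaner backward map, whereas yours avoids shrinking the domain at the cost of a short extra verification for transitivity and the subsequence extraction. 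Neither approach yields anything the other does not.
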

\begin{proof}
Given a non-well quasi-order $(Q,\preceq_Q)$ where $Q \subseteq \mathbb{N}$,
compute the set $S := \{a \in Q \st (\forall b <_\mathbb{N} a)(a \not\preceq_Q b \text{ or }b \not\preceq_Q a)\}$.
The restriction $(S,\preceq_Q)$ is a non-well partial order because it is isomorphic to the partial order of $\preceq_Q$-equivalence classes.
\end{proof}

Henceforth we will use \thref{prop:BS_qo_po} without mention.

\section{Separating $\BS$ and $\DS$}
\label{sec:separation}
We shall separate $\BS$ and $\DS$ by separating their first-order parts.

\begin{theorem}
\thlabel{theo:bsdssep}
$\fop{\BS} \nleqW \fop{\DS}$ and so $\DS \leW \BS$.
\end{theorem}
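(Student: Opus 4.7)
The second assertion follows from the first: the reduction $\DS\leqW\BS$ is immediate (an ill-founded linear order is in particular a non-wqo, and a descending sequence is a bad sequence), and if additionally $\BS\leqW\DS$ held, then by monotonicity of the first-order part one would get $\fop{\BS}\leqW\fop{\DS}$, contradicting the first assertion. So the whole statement reduces to establishing $\fop{\BS}\nleqW\fop{\DS}$.

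The plan for $\fop{\BS}\nleqW\fop{\DS}$ is to exhibit a single problem $f\pmfunction{\mathbf{X}}{\mathbb{N}}$ with $f\leqW\BS$ but $f\nleqW\fop{\DS}$. Since $\fop{\BS}$ is the $\leqW$-maximum among problems with codomain $\mathbb{N}$ that reduce to $\BS$, any such $f$ satisfies $f\leqW\fop{\BS}$, and a reduction $\fop{\BS}\leqW\fop{\DS}$ would force $f\leqW\fop{\DS}$. The choice of $f$ should exploit the key structural asymmetry between linear and partial orders: only the latter admit infinite antichains, so bad sequences in a quasi-order can arise either from a descending chain or from an antichain, whereas an infinite bad sequence in a linear order is necessarily descending. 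This extra \emph{antichain route} is exactly what we want to hide from $\fop{\DS}$.

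Concretely, I would take $f$ to be, or to reduce to, a principle close to $\pitaccN$: given an effective presentation of a non-wqo that is guaranteed to live in a specific "antichain-shaped" family, output a natural number marking the antichain. The upper bound $f\leqW\BS$ would be established by a direct construction: given the $f$-instance, build a partial order whose bad sequences are forced to expose an element of the relevant antichain, and read the required natural number from the first entry of any bad sequence. For the lower bound $f\nleqW\fop{\DS}$, I would argue by stage-by-stage diagonalization against a hypothetical reduction $(\Phi,\Psi)$: at each stage, one watches the finite portion of $L = \Phi(x)$ enumerated so far together with $\Psi$'s responses on short candidate descending sequences, then extends the $f$-instance $x$ and the adversarial descending sequence of $L$ to pin $\Psi$ to a wrong natural-number commitment. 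The rigid nested structure of descending sequences in a linear order (they form a chain we can reveal top-down) gives enough adversarial grip on $\Psi$.

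The main obstacle I anticipate is running this diagonalization while keeping $x$ a valid $\BS$-instance (a genuine non-wqo with the antichain structure required by $f$): $\Phi$ is only obliged to produce an ill-founded linear order on valid inputs, so we must maintain the promise throughout the back-and-forth. The creative step is to pin down $f$ concretely enough that the encoding into $\BS$ and the diagonalization against $\DS$ cohere; I would expect the analysis to draw on the degree-theoretic machinery around $\pitaccN$ and first-order parts that is developed later in the paper, specifically using that linear orders cannot mimic the $\boldsymbol{\Pi}^0_2$-style "guess and correct" behavior that antichains in partial orders naturally support.
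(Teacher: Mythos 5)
Your reduction of the second assertion to the first is fine, and the overall plan --- exhibit a first-order $g$ with $g \leqW \BS$ but $g \nleqW \fop{\DS}$ --- matches the paper. But both ingredients you sketch have gaps. The choice of separating problem is off target: $\pitaccN$ in fact reduces to $\fop{\DS} \equivW \PiBound$. Concretely, the guess $\max\{p(0),\ldots,p(s)\}+1$ at stage $s$ is correct cofinitely often on every $\pitaccN$-instance $p$, the finite set of wrong stages is uniformly $\Sigma^0_2(p)$ (hence $\Pi^1_1(p)$), and $\PiBound$ can bound that set and thereby recover a correct guess. The paper itself records at the end of Section~\ref{sec:antichain} that even $\fop{\BStree} \equivW \ExtVer$, which strictly dominates $\pitaccN$, lies below $\DS$. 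So any $f$ that reduces to $\pitaccN$, or even to $\ExtVer$, cannot separate. What the paper actually diagonalizes against is the problem of finding an extendible element in a non-well partial order that \emph{admits} a tree decomposition \emph{which is withheld from the algorithm}; that hiding is essential, and your ``specific antichain-shaped family'' framing does not capture it.

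The lower-bound strategy is also missing the decisive mechanism. A bare diagonalization against a pair $(\Phi,\Psi)$ witnessing $f \leqW \DS$ forces you to reason directly about the ill-founded linear order $\Phi(x)$, whose descending sequences may be hyperarithmetically complex and thus invisible to a finite-stage adversary. The paper sidesteps this with two devices you do not invoke: the known characterization $\fop{\DS} \equivW \PiBound$, and the observation that $\PiBound$ is \emph{upwards closed}, whence (\thref{lemma:upwardsclosed}) reducibility to it is equivalent to the existence of a computable guessing procedure correct cofinitely often. This converts the task into a purely combinatorial claim --- no such guesser exists for extendibility in tree-decomposable partial orders --- which the paper establishes with a careful construction (\thref{lemma:seperationwork}) producing a tree with exactly one non-isolated infinite path, on which the guesser is forced to bet infinitely often and infinitely often wrongly. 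You correctly flag ``the creative step'' as the missing piece; that step, together with the $\PiBound$/upwards-closed reformulation, is essentially the entire proof, and the $\pitaccN$-flavoured intuition you lean on points in a direction that the paper's own results show cannot work.
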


Recall from \cite[Theorem 4.10]{goh-pauly-valenti} that $\fop{\DS} \equivW \PiBound$,
which is the problem of producing an upper bound for a finite subset of $\mathbb{N}$ (given via a $\boldfacePi^1_1$-code).
Observe that $\PiBound$ is \emph{upwards closed}, i.e., if $n \in g(x)$ then $m \in g(x)$ for all $m > n$.

\begin{lemma}
\thlabel{lemma:upwardsclosed}
Let $f$ be a problem with codomain $\mathbb{N}$. The following are equivalent:
\begin{enumerate}
\item there exists an upwards closed problem $g$ with codomain $\mathbb{N}$ such that $f\leqW g$;
\item there is a computable procedure which takes as input any $x \in \dom(f)$ and produces a sequence $p_x\in\Baire $ of guesses for $f$-solutions to $x$ which is correct cofinitely often.
\end{enumerate}
\end{lemma}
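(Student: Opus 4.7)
The plan is to establish both implications by direct construction, using the cofiniteness of non-empty upwards-closed subsets of $\mathbb{N}$ as the bridge between the two formulations.

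For (1) $\Rightarrow$ (2), I fix witnesses $\Phi, \Psi$ of $f \leqW g$ with $g$ upwards closed on $\mathbb{N}$. Given $x \in \dom(f)$ with name $p$, the set $g(\Phi(p))$ is a non-empty upwards closed subset of $\mathbb{N}$, hence cofinite. Consequently, for all but finitely many $n$, the natural number $n$ is a genuine $g$-solution to $\Phi(p)$. Letting $\nu(n)$ denote a standard name of $n \in \mathbb{N}$ (uniformly computable from $n$), I define
\[ p_x(n) := \Psi(p, \nu(n)), \]
which gives a sequence of guesses for $f$-solutions, computable uniformly in $p$. Whenever $n \in g(\Phi(p))$ the output $\Psi(p, \nu(n))$ is a valid $f$-solution, and this happens cofinitely often, as required.

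For (2) $\Rightarrow$ (1), I let $P$ denote the computable procedure that sends (a name of) $x \in \dom(f)$ to $p_x \in \Baire$, and define a problem $g \pmfunction{\mathbf{X}}{\mathbb{N}}$ with $\dom(g) := \dom(f)$ by
\[ g(x) := \{ n \in \mathbb{N} \st (\forall m \geq n)\ p_x(m) \in f(x)\}. \]
This set is non-empty by the cofinite-correctness hypothesis (any threshold past which the guesses are correct belongs to it), and it is upwards closed by construction. To exhibit $f \leqW g$, I take $\Phi$ to be the identity on names and define $\Psi(p, q)$ to read off the natural number $n$ coded by $q$, compute $p_x = P(p)$, and return $p_x(n)$. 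Membership of $n$ in $g(x)$ guarantees $p_x(n) \in f(x)$, so this is an $f$-solution.

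The only subtlety will be represented-space bookkeeping: in the first direction, that $\nu$ is a uniformly computable map from $\mathbb{N}$ into $\Baire$-names; in the second, that $P$ really yields a computable map from names of $x$ into points of $\Baire$ so that the $n$-th entry can be read off uniformly. Both are routine consequences of the standard representations of $\mathbb{N}$ and $\Baire$, so no further difficulty arises.
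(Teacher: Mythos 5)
Your direction (2) $\Rightarrow$ (1) is essentially the paper's argument: you define $g(x)$ as the set of thresholds past which all guesses are correct, which is exactly the paper's $\{n : n > M_x\}$, and you reduce $f$ to $g$ by reading off the $n$-th guess. That part is fine.

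In direction (1) $\Rightarrow$ (2) there is a genuine gap. You set $p_x(n) := \Psi(p, \nu(n))$ and assert that this ``gives a sequence of guesses \dots computable uniformly in $p$.'' But a Weihrauch reduction only guarantees that $\Psi(p,q)$ \emph{converges} when $q$ is a name for a $g$-solution of the instance $\Phi(p)$; for $n \notin g(\Phi(p))$ the computation $\Psi(p,\nu(n))$ may diverge, and there may well be such $n$ (only cofinitely many $n$ lie in $g(\Phi(p))$). If any $\Psi(p,\nu(n))$ diverges, your $p_x$ is not a total element of $\Baire$, so condition (2) — which demands an actual point of $\Baire$ — is not met, and the claimed computability of $x \mapsto p_x$ fails. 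The paper avoids this by dovetailing: run all $\Psi(p,\nu(m))$ in parallel, and whenever some $\Psi(p,\nu(m))$ halts, append its result to the output and cancel every still-running $\Psi(p,\nu(n))$ with $n < m$. This guarantees totality of the output, and it guarantees eventual correctness because the indices contributing outputs form a strictly increasing sequence, hence eventually exceed the finite set $\mathbb{N} \setminus g(\Phi(p))$; upward closure of $g$ then does the rest. Your proof needs this (or an equivalent device) to be correct.
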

\begin{proof}
For $(1) \Rightarrow (2)$, let $g$ be upwards closed with codomain $\mathbb{N}$ and assume $f \leqW g$ via $\Phi$ and $\Psi$. Given $x \in \dom(f)$, run the computations $(\Psi(x,m))_{m \in \mathbb{N}}$ in parallel. Once some $\Psi(x,m)$ halts, we output its result and cancel $\Psi(x,n)$ for all $n < m$. This produces a sequence of numbers. The fact that $g$ is upwards closed guarantees that cofinitely many elements of this sequence are elements of $f(x)$.

For the converse direction, for every $x\in \dom(f)$, let $p_x\in\Baire$ be as in the hypothesis. Define  $M_x:=\max \{m \st p_x(m)\notin f(x)\}$ and let $g(x):=\{n \st n>M_x\}$. Clearly $g$ is upwards closed. The fact that $f\leqW g$ follows from the fact that $x\mapsto p_x$ is computable.
\end{proof}

Given a non-well quasi-order $(Q,\preceq_Q)$, we say that a finite sequence $\sigma$ is \emph{extendible} to an infinite $\preceq_Q$-bad sequence (or, more compactly, $\sigma$ is $\preceq_Q$-extendible) if there is a $\preceq_Q$-bad sequence $\sequence{q_n}{n\in\mathbb{N}}$ such that $(\forall i<\length{\sigma})(\sigma(i)=q_i)$. We omit the subscript whenever there is no ambiguity. 

Observe that $\fop{\BS}$ can compute the problem ``given a non-well partial order $(P,\le_P)$, produce an element of $P$ that is extendible to an infinite bad sequence''. In light of \thref{lemma:upwardsclosed}, to prove \thref{theo:bsdssep} it suffices to show that one cannot computably ``guess'' solutions for $\BS$. In other words, given a computable procedure which tries to guess extendible elements in a non-wqo, we want to construct a non-wqo $P$ on which the procedure outputs a non-extendible element infinitely often. This would imply that $\fop{\BS} \not\weireducible \PiBound$. The non-wqos $P$ we construct will be ``tree-like'' in the following sense:

\begin{definition} \thlabel{def:treedecomposition}
A \emph{tree decomposition} of a partial order $(P,\le_P)$ consists of a tree $T \subseteq \cantor$ and a function $\iota\function{T}{P}$ such that:
\begin{enumerate}
	\item If $w_1,w_2 \in T$ and $w_1$ is a proper prefix of $w_2$ (written $w_1 \pprefix w_2$), then $\iota(w_1) <_P \iota(w_2)$.
	\item $P$ is partitioned into finite $P$-intervals, where each interval has the form
	\[ (w\concat b] = \{v \in P \st \iota(w) <_P v \leq_P \iota(w\concat b)\} \]
	for some vertex $w\concat b \in T$ (with final entry $b$), or $(\varepsilon] = \{\iota(\varepsilon)\}$ (where $\varepsilon$ denotes the root of $\cantor$). For $v \in P$ let $\lceil v \rceil \in T$ be uniquely defined by $v \in (\lceil v \rceil]$.
	\item If $w_1, w_2 \in T$ are incompatible, so are $\iota(w_1)$ and $\iota(w_2)$ (i.e.\ they have no common upper bound in $P$).
\end{enumerate}
\end{definition}

The following lemma is straightforward.

\begin{lemma}
\thlabel{lemma:treedecomposition}
If $\iota\function{T}{P}$ is a tree decomposition, then $P$ has no infinite descending sequences. Moreover, $T$ is wqo (i.e.\ it has finite width) if and only if $P$ is wqo. In other words, $T$ has an infinite antichain iff so does $P$.
\end{lemma}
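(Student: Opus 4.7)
The plan is to leverage the three conditions of \thref{def:treedecomposition} in order to transport structural information between $T$ and $P$. The coordinate map $v \mapsto \lceil v \rceil$ respects the order in the sense that the contrapositive of condition~(3) says: whenever two $\iota$-values admit a common upper bound in $P$, the corresponding tree nodes must be $T$-compatible.

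For the first claim, I would suppose towards a contradiction that $(p_n)_{n \in \mathbb{N}}$ is an infinite strictly descending sequence in $P$, and set $w_n := \lceil p_n \rceil$. The key step is to show that the depths $(|w_n|)_n$ are non-increasing. Given $i<j$ (so $p_j <_P p_i$), if $w_j = \varepsilon$ the inequality $|w_j|\le |w_i|$ is trivial; otherwise let $w_j^-$ be the parent of $w_j$ in $T$. From $\iota(w_j^-) <_P p_j <_P p_i \le_P \iota(w_i)$ we read off that $\iota(w_i)$ is a common upper bound of $\iota(w_j^-)$ and $\iota(w_i)$, so (3) forces $w_j^-$ and $w_i$ to be $T$-compatible. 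Condition~(1) excludes $w_i \pprefix w_j^-$, and the case $w_i = w_j^-$ is ruled out directly (it would give $\iota(w_i) = \iota(w_j^-) <_P p_j$, against $p_j <_P p_i \le_P \iota(w_i)$); hence $w_j^- \pprefix w_i$, yielding $|w_j| \le |w_i|$. A non-increasing sequence of naturals must stabilize at some $d$, but only finitely many $w \in T$ have $|w| = d$, and each associated interval is finite by~(2), so cofinitely many $p_n$ would be forced to repeat --- contradicting strict descent.

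For the second claim, the direction $T \Rightarrow P$ is immediate from~(3): an infinite antichain $(w_n)_n$ in $T$ is mapped by $\iota$ to a sequence of pairwise $P$-incompatible, hence pairwise $P$-incomparable, elements. Conversely, given an infinite $P$-antichain $(p_n)_n$, the finiteness of intervals forces $\{\lceil p_n \rceil : n \in \mathbb{N}\}$ to be infinite, so after thinning I may assume the $w_n := \lceil p_n \rceil$ are pairwise distinct. Applying Ramsey's theorem to the three-color pair-coloring ``$w_i \pprefix w_j$ / $w_j \pprefix w_i$ / $w_i, w_j$ incompatible'', together with the fact that $T \subseteq \cantor$ admits no infinite descending chain, yields a subsequence which is either an infinite antichain in $T$ (in which case we are done) or a strict ascending chain $w_{n_1} \pprefix w_{n_2} \pprefix \ldots$. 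In the latter case, condition~(1) gives $\iota(w_{n_1}) \le_P \iota(w_{n_k}^-) <_P p_{n_k}$ for each $k \ge 2$, hence $p_{n_1} \le_P \iota(w_{n_1}) <_P p_{n_k}$, contradicting that $(p_n)_n$ is a $P$-antichain.

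The main subtlety lies in the descending-sequence argument: one cannot hope to prove the stronger statement $w_j \prefix w_i$ whenever $p_i >_P p_j$, because a tree decomposition may genuinely have $w_i$ and $w_j$ $T$-incompatible (when $p_i$ and $p_j$ sit below $T$-incompatible maxima with intervals stretching down far enough to cross). The right invariant is the depth, and the decisive move is to step up to the parent $w_j^-$, which recovers a common upper bound with $\iota(w_i)$ and brings condition~(3) into play. Once the depth bound is in hand, the rest of both parts reduces to routine uses of (1)--(3) and the finiteness of intervals.
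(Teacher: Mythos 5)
Your proof is correct, and it takes a route that is genuinely different from (and, for the first half, more careful than) the paper's.

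For the claim that $P$ has no infinite descending sequence, the paper simply asserts that $(\lceil v_n\rceil)_n$ would, up to duplicates, be an infinite descending sequence in $T$. As you observe, a single pair $v' <_P v$ does \emph{not} in general give $\lceil v'\rceil \prefix \lceil v\rceil$; one can build small tree decompositions where $\lceil v'\rceil$ and $\lceil v\rceil$ are $T$-incompatible even though $v' <_P v$. Your replacement --- proving instead that the depths $|\lceil p_n\rceil|$ are non-increasing by stepping up to the parent $w_j^-$, using (3) via the common upper bound $\iota(w_i)$ of $\iota(w_j^-)$ and $\iota(w_i)$, then ruling out $w_i \prefix w_j^-$ via (1) --- is exactly what makes the argument airtight, and the finiteness of level $d$ of $\cantor$ together with (2) then delivers the contradiction. (One can push a little further and show that all $w_j$ at the limiting depth share the same parent, so they lie among at most two siblings; but the weaker depth-stabilization already suffices, as you note.)

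For the antichain equivalence, the forward direction coincides with the paper's. For the converse, the paper argues directly that for each $n$, all but finitely many $m$ have $\lceil v_m\rceil$ incomparable with $\lceil v_n\rceil$ (finitely many lie below by finiteness of intervals; none lie strictly above, since $w \pprefix \lceil v_m\rceil$ would force $v_n \le_P \iota(w) \le_P \iota(\lceil v_m\rceil^-) <_P v_m$), and then thins greedily. You instead pass through Ramsey's theorem for pairs with three colors, discard the ``descending'' color because $\cantor$ has no infinite descending chains, and discard the ``ascending'' color by the same interval inequality the paper uses in its ``none lie strictly above'' step. Both arguments are sound; the paper's is a touch more elementary (no appeal to Ramsey), while yours is more modular. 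Overall the proposal is a valid and, in the first part, a more scrupulous proof of the lemma.
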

\begin{proof}
The fact that every partial order that admits a tree decomposition does not have an infinite descending sequence follows from the fact that if ($v_n)_{n \in \mathbb{N}}$ is an infinite descending sequence in $P$, then since every interval $(\lceil v_n \rceil ]$ is finite, up to removing duplicates, the sequence $(\lceil v_n\rceil)_{n\in\mathbb{N}}$ would be an infinite descending sequence in $T$. 

If $(w_n)_{n \in \mathbb{N}}$ is an infinite antichain in $T$ then, by definition of tree decomposition, $(\iota(w_n))_{n \in \mathbb{N}}$ is an infinite antichain in $P$. Conversely, if $(v_n)_{n \in \mathbb{N}}$ is an infinite antichain in $P$, then for every $n$, for all but finitely many $m$, $\lceil v_n \rceil$ is $\prefix$-incomparable with $\lceil v_m \rceil$. In particular, we can obtain an infinite antichain in $T$ by choosing a subsequence $(v_{n_i})_{i \in \mathbb{N}}$ such that, for every $i\neq j$, $\lceil v_{n_i} \rceil$ and $\lceil v_{n_j} \rceil$ are $\prefix$-incomparable.
\end{proof}  

\begin{lemma}
\thlabel{lemma:seperationwork}
There is no computable procedure that, given as input a non-well partial order which admits a tree decomposition, outputs an infinite sequence of elements of that partial order such that cofinitely many elements in the output are extendible to a bad sequence.
\end{lemma}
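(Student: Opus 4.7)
The plan is an adversarial stage-by-stage construction of a non-well partial order $P$ with a tree decomposition on which a putative $\Phi$ must fail. I represent $P$ as a subtree $T \subseteq \cantor$ together with a bijection $\iota\function{T}{P}$, using singleton buckets, so that $P$ is isomorphic (as a partial order) to $(T, \prefix)$. Under this identification, $w \in T$ is non-extendible exactly when cofinitely many elements of the infinite antichain in $T$ extend $w$ under $\prefix$. Throughout, I simulate $\Phi$ on the committed portion of the name of $P$ and react to its outputs.

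At each stage $s$, maintain a distinguished \emph{focus} $\pi_s \in T_s$, below which all future antichain elements will live. A new antichain element $a_s$ is added as a leaf extending $\pi_s$, pairwise $\prefix$-incompatible with every previously added $a_{s'}$ (always possible because every subtree of $\cantor$ has continuum-many pairwise incompatible nodes of arbitrary depth). When $\Phi$ emits $v_s$, set $w_s := \lceil v_s \rceil \in T_{s-1}$ and update so that $w_s \prefix \pi_s$: keep $\pi_s := \pi_{s-1}$ if $w_s \prefix \pi_{s-1}$, deepen to $\pi_s := w_s$ if $\pi_{s-1} \prefix w_s$, or else \emph{pivot} to $\pi_s := w_s$. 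Provided the focus stays within the subtree of $\pi_s$ at every later stage, the antichain elements added afterwards extend $w_s$, witnessing the non-extendibility of $v_s$ in the final $T$.

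The main obstacle is the pivot case, since a pivot can retroactively move the focus out of the subtree of some past $\pi_{s'}$ and hence make $v_{s'}$ extendible in the limit. I plan to handle this with a case analysis on how often pivots occur: if pivots eventually cease, the focus stabilizes to an infinite path in $\cantor$ and every sufficiently late $v_s$ is non-extendible; if pivots occur infinitely often, one has to argue more delicately that the construction can be refined (for instance, by postponing pivots until enough structure is in place below the current focus, or by interleaving additional deepening steps) so that an infinite subsequence of pivot targets forms a $\prefix$-chain, in which case the corresponding $v_s$'s remain non-extendible in the final $T$. Either way, the hypothesized $\Phi$ is contradicted. By construction $T$ admits the infinite antichain $\{a_s\}_{s\geq 1}$, so $P$ is non-well, and $(T, \iota)$ witnesses a tree decomposition in the sense of \thref{def:treedecomposition}.
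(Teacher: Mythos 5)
Your construction is purely \emph{reactive}: you wait for $\Phi$ to emit $v_s$, form $w_s = \lceil v_s \rceil$, and then try to drag the focus $\pi_s$ into the cone below $w_s$. The pivot case is not a side issue you can defer---it is exactly where an adversarial $\Phi$ lives. Concretely, $\Phi$ can alternate between guessing in two $\prefix$-incomparable subtrees of $T$: each time you pivot to follow its latest guess, the previous focus (and the antichain witnesses you accumulated below it) are abandoned, and the $v_{s'}$ you were diagonalizing against becomes extendible again. Your case (``pivots eventually cease'') works out, but you give no mechanism that forces it; the proposed remedies (``postpone pivots,'' ``interleave deepening steps,'' ``extract a $\prefix$-chain of pivot targets'') are not constructions, and the claim that an infinite chain of pivot targets can always be extracted is precisely the convergence statement you would need to prove. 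As written, the argument does not close.

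The paper resolves this with a \emph{proactive} search rather than a reaction. At each stage it considers, for each leaf $\sigma$ of the current tree, whether some ``suitable'' finite extension of $P_s$ (an infinite-comb test placed over a single leaf) causes $\Phi$ to emit a guess comparable with $\iota_s(\sigma)$, and then acts on the \emph{leftmost} such $\sigma$. Because suitable extensions of $P_s$ are also suitable extensions of every $P_{s'}$ with $s' < s$, and the leftmost leaf was chosen at each earlier stage, the chosen bifurcation points $\sigma_s$ can only move upward or to the right. This monotonicity is what your construction is missing: it forces $\lim_s \sigma_s$ to exist as a unique non-isolated path $p$, and every vertex on $p$ is non-extendible, so every guess that $\Phi$ was ``caught'' making comparable with a leaf on $p$ is wrong. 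Relatedly, the paper deliberately does not use singleton buckets: the non-trivial intervals in the tree decomposition are what let it feed $\Phi$ a rich partial order (the comb) while committing only one new bifurcation to $T$ per stage, which keeps the bookkeeping controlled. If you want to salvage a reactive argument, you would have to replace the pivot heuristic with something that provably forces the focus to converge against every $\Phi$; the leftmost-leaf rule plus nested suitable extensions is the paper's answer to exactly that problem.
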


We point out a subtle yet important aspect regarding \thref{lemma:seperationwork}: The procedure only has access to the partial order, not to a tree decomposition of it.

\begin{proof}
Fix a computable ``guessing'' procedure $g$ that receives as input a partial order (admitting a tree decomposition) and outputs an infinite sequence of elements in that partial order. We shall build a partial order $P$ together with a tree decomposition $\iota \function{T}{P}$ in stages such that, infinitely often, $g$ outputs an element of $P$ that does not extend to an infinite bad sequence. 

Start with $T_0 = \{\varepsilon\}$ and $P_0$ having a single element $v_\varepsilon$, with $\iota_0(\varepsilon) = v_\varepsilon$. In stage $s$, we have built a finite tree decomposition $\iota_s\function{T_s}{P_s}$ and wish to extend it to some $\iota_{s+1}\function{T_{s+1}}{P_{s+1}}$. The tree $T_{s+1}$ will always be obtained by giving each leaf in $T_s$ a single child, and then adding two children to exactly one of the new leaves. To decide which leaf gets two children, say a finite extension $Q$ of $P_s$ is \emph{suitable} for $\iota_s\function{T_s}{P_s}$ if for every $v \in Q \setminus P_s$, there is exactly one leaf $w \in T_s$ such that $\iota_s(w) <_Q v$. Pick the left-most leaf $\sigma$ of $T_s$ with the following property:
\begin{quote}
There is some suitable extension $Q$ of $P_s$ such that, when given $Q$, the guessing procedure $g$ would guess an element of $Q$ which is comparable with $\iota_s(\sigma)$.
\end{quote}
To see that such $\sigma$ must exist, consider extending $P_s$ by adding an ``infinite comb'' (i.e.\ a copy of $\{0^n1^i \st n \in \mathbb{N}, i \in \{0,1\}\}$) above the $\iota_s$-image of a single leaf in $T_s$. The resulting partial order $\overline{Q}$ is non-wqo, admits a tree decomposition (obtained by extending $T_s$ and $\iota_s$ in the obvious way), and its finite approximations (extending $P_s$) are suitable for $\iota_s$. Hence, by hypothesis, $g$ eventually guesses some element, which must be comparable with $\iota_s(\sigma)$ for some leaf $\sigma \in T_s$ (because all elements of $\overline{Q}$ are).

Having identified $\sigma$, we fix any corresponding suitable extension $Q$ of $P_s$. In order to extend $\iota_s$, we further extend $Q$ to $Q'$ by adding a new maximal element $v_w$ to $Q$ for each leaf $w \in T_s$ as follows: $v_w$ lies above all $v \in Q\setminus P_s$ such that $\iota_s(w) <_P v$, and is incomparable with all other elements (including the other new maximal elements $v_{w'}$). To extend $T_s$, we add a new leaf $\tau \concat 0$ to $T_s$ for each leaf $\tau$, obtaining a tree $T'$. We extend $\iota_s$ to yield a tree decomposition $\iota'\function{T'}{Q'}$ in the obvious way.

Finally, we add two children to $\sigma\concat 0$ in $T'$, i.e., define $T_{s+1} := T' \cup \{\sigma \concat 00, \sigma\concat  01\}$. We also add two children $v_1$, $v_2$ to $\iota'(\sigma\concat 0)$ in $Q'$ to obtain $P_{s+1}$, and extend $\iota'$ to $\iota_{s+1}$ by setting $\iota_{s+1}(\sigma\concat 0i) := v_i$. This concludes stage $s$.

It is clear from the construction that $\iota: T \to P$ is a tree decomposition. Let us discuss the shape of the tree $T$. In stage $s$, we introduced a bifurcation above a leaf $\sigma_s$ of $T_s$. These are the only bifurcations in $T$. Observe that, whenever $s' < s$, $\sigma_s$ is either above or to the right of $\sigma_{s'}$, because every suitable extension of $P_s$ is also a suitable extension of $P_{s'}$ and, at stage $s'$, the chosen leaf was the left-most. This implies that the sequence $(\sigma_s)_s$ converges to a path $p\in [T]$, i.e.\ for every $n$ there is a stage $s$ such that for every $s'>s$, $p[n]\prefix \sigma_{s'}$. This also implies that $p$ is the unique non-isolated path (if $\tau \not \prefix p$ then there are only finitely many bifurcations above $\tau$). Observe also that a vertex $w$ in $T$ is extendible to an infinite antichain in $T$ if and only if it does not belong to $p$.

\begin{figure}[htb]
	\centering
	\begin{tikzpicture}[scale=0.7]
		\begin{pgfonlayer}{nodelayer}
			\node [style=pnode] (0) at (0, 0) {};
			\node [style=pnode] (1) at (-3, 2) {};
			\node [style=pnode] (2) at (3, 2) {};
			\node [style=pnode] (3) at (-4.5, 4) {};
			\node [style=pnode] (4) at (-1.5, 4) {};

			\node (4l) at (-0.5, 4) {$\iota(\sigma)$};
			\node [style=extnode] (5) at (-5.5, 6) {};
			\node [style=extnode] (6) at (-4.5, 6) {};
			\node [style=extnode] (7) at (-3.5, 6) {};
			\node [style=extnode] (8) at (-4, 7) {};
			\node [style=extnode] (9) at (-1.5, 5.5) {};
			\node [fill=gray, draw=black, inner sep=3pt, shape=rectangle] (10) at (-1.5, 6.25) {};
			\node [style=extnode] (11) at (-1.5, 7) {};

			\node [style=extnode] (13) at (2.5, 5.5) {};
			\node [style=extnode] (14) at (3.5, 5.5) {};
			\node [style=extnode] (16) at (2, 6.5) {};
			\node [style=extnode] (15) at (3, 6.5) {};
			\node [style=extnode] (17) at (4, 6.5) {};

			\node [style=extnode] (18) at (-5, 7) {};

			\node [style=pnode] (19) at (-4.5, 8) {};
				\node [style=pnode] (20) at (3, 7.5) {};
				\node [style=pnode] (11b) at (-1.5, 7) {};

				\node (4l) at (-0.1, 7) {$\iota(\sigma\concat 0)$};
				\node (g) at (0, 6.3) {$g(s+1)$};
			\node [style=pnode] (21) at (-3, 9.5) {};
			\node [style=pnode] (22) at (0, 9.5) {};
		\end{pgfonlayer}
		\begin{pgfonlayer}{edgelayer}
				\draw (1) to (0);
				\draw (0) to (2);
				\draw (1) to (3);
				\draw (1) to (4);

				\draw (13) to (15);
				\draw (14) to (17);
				\draw (15) to (14);
				\draw (13) to (16);
				\draw (4) to (9);
				\draw (3) to (5);
				\draw (3) to (6);
				\draw (3) to (7);
				\draw (5) to (18);
				\draw (6) to (8);
				\draw (18) to (6);
				\draw (8) to (7);
				\draw (9) to (10);
				\draw (10) to (11);
				\draw (2) to (13);
				\draw (2) to (14);

				\draw (8) to (19);
				\draw (18) to (19);
				\draw (16) to (20);
				\draw (15) to (20);
				\draw (17) to (20);

				\draw (11) to (21);
				\draw (11) to (22);

			\end{pgfonlayer}
		\begin{pgfonlayer}{background}

			\node[ngroup, fit=(0)] {};
			\node[ngroup, fit=(1)] {};
			\node[ngroup, fit=(2)] {};
			\node[ngroup, fit=(3)] {};
			\node[ngroup, fit=(4)] {};
			
			\node[ngroup, fit=(5) (6) (7) (8) (18) (19)] {};
			\node[ngroup, fit=(9) (10) (11)] {};
			\node[ngroup, fit=(13) (14) (15) (16) (17) (20)] {};

			\node[ngroup, fit=(21)] {};
			\node[ngroup, fit=(22)] {};

			\node[draw=gray, dashed, rounded corners, inner sep=1em, fit=(0) (1) (2) (3) (4)] {};

		\end{pgfonlayer}
	\end{tikzpicture}
	\caption{Schematic representation of the construction used in the proof of \thref{lemma:seperationwork}. The dashed box contains the partial construction up to stage $s$. The gray boxes contain intervals in the partial order $P$. For simplicity, each interval up to stage $s$ only contains one point (i.e.\ the partial order $P_s$ is isomorphic to a tree), but this need not be the case in general. The black nodes are those in the range of the (partial) tree decomposition $\iota_s$. The square gray node is the node guessed by $g$ at stage $s+1$ (which identifies $\iota(\sigma)$ and $\iota(\sigma \concat 0)$). }
\end{figure}

We may now apply \thref{lemma:treedecomposition} to analyze $P$. First, since $T$ is not wqo, neither is $P$. Second, we claim that if $v <_P \iota(\sigma)$ for some vertex $\sigma$ on $p$, then $v$ is not extendible to an infinite bad sequence. To prove this, suppose $v$ is extendible. Then so is $\iota(\sigma)$. The proof of \thref{lemma:treedecomposition} implies that $\lceil \iota(\sigma) \rceil = \sigma$ is extendible to an infinite antichain in $T$. So $\sigma$ cannot lie on $p$, proving our claim.

To complete the proof, observe that our construction of $\iota$ ensures that for each $s$, $g(P)$ eventually outputs a guess which is below $\iota(\sigma_s \concat 0)$. Whenever $\sigma_s \concat 0$ lies along $p$ (which holds for infinitely many $s$), this guess is wrong by the above claim.
\end{proof}

We may now complete the proof of \thref{theo:bsdssep}.

\begin{proof}[Proof of \thref{theo:bsdssep}]
Suppose towards a contradiction that $\fop{\BS} \leqW \PiBound$. Since the problem of finding an element in a non-wqo which extends to an infinite bad sequence is first-order, it is Weihrauch reducible to $\PiBound$ as well. Now, $\PiBound$ is upwards closed, so there is a computable guessing procedure for this problem (\thref{lemma:upwardsclosed}). However such a procedure cannot exist, even for partial orders which admit a tree decomposition (\thref{lemma:seperationwork}).
\end{proof}

\section{Separating $\KL$ and $\BS$}
\label{sec:klbs}

In this section, we answer two of the main questions that were left open in \cite[Question 6.1 and 6.2]{goh-pauly-valenti}, namely whether the problems $\KL$ and $\mathsf{wList}_{2^{\mathbb{N}},\leq\omega}$ are Weihrauch reducible to $\DS$. We already introduced $\KL$ in Section~\ref{sec:background}, while $\mathsf{wList}_{2^{\mathbb{N}},\leq\omega}$ is the problem of enumerating all elements (possibly with repetition) of a given non-empty countable closed subset of $2^\mathbb{N}$. This problem was introduced in \cite[\S 6]{kmp20} and studied also in \cite{CMVCantorBendixson}. 

In fact, we prove something stronger, namely that neither $\KL$ nor $\mathsf{wList}_{2^{\mathbb{N}},\leq\omega}$ are Weihrauch reducible to $\BS$. The core of the proofs rests on the following two technical results.

\begin{definition} \thlabel{def:bad_seq_quasiorder}
Given a fixed partial order $(P,\leq_P)$, we define the following quasi-order on the (finite or infinite) $\leq_P$-bad sequences:
\[ \alpha \trianglelefteq^P \beta \defiff \alpha = \beta \text{ or } (\exists i<\length{\alpha})(\forall j<\length{\beta})(\alpha(i) \leq_P \beta(j)). \]
We just write $\trianglelefteq$ when the partial order is clear from the context. 
\end{definition}

\begin{lemma}
	\thlabel{lem}
	Let $(P,\le_P)$ be a non-well partial order and let $\alpha,\beta$ be finite $\le_P$-bad sequences. If $\alpha\trianglelefteq \beta$ and $\alpha$ is extendible to an infinite $\le_P$-bad sequence, then so is $\beta$. If $\alpha$ is not extendible then there is an infinite $\le_P$-bad sequence $B\in \Baire$ such that $\alpha\trianglelefteq B$. (Hence $\alpha \trianglelefteq \beta$ for every initial segment $\beta$ of $B$.)
\end{lemma}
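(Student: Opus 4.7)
The plan is to prove the two parts separately. For the first part, suppose $\alpha \trianglelefteq \beta$ with $\alpha \neq \beta$, and fix a witness $i < \length{\alpha}$ so that $\alpha(i) \le_P \beta(j)$ for every $j < \length{\beta}$. Let $\gamma \in \Baire$ be an infinite tail making $\alpha \concat \gamma$ a bad sequence, which exists by extendibility of $\alpha$. I would then verify that $\beta \concat \gamma$ is itself bad: both blocks are bad by hypothesis, so the only thing to check are the cross comparisons $\beta(j) \not\le_P \gamma(k)$. If $\beta(j) \le_P \gamma(k)$ held for some pair, then by transitivity $\alpha(i) \le_P \beta(j) \le_P \gamma(k)$, contradicting badness of $\alpha \concat \gamma$ at positions $i < \length{\alpha} + k$.

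For the second part, I would start from any infinite $\le_P$-bad sequence $C$ in $P$, which exists because $P$ is non-well. Set $U_i := \{p \in P \st \alpha(i) \le_P p\}$ for each $i < \length{\alpha}$, and argue by pigeonhole on whether some $U_i$ contains infinitely many terms of $C$. In the positive case, the corresponding subsequence $B$ of $C$ is still bad (every subsequence of a bad sequence is bad) and satisfies $\alpha(i) \le_P B(j)$ for every $j$, yielding $\alpha \trianglelefteq B$ with witness $i$. In the negative case, only finitely many terms of $C$ lie in $\bigcup_i U_i$; I would pick $K$ greater than all of their indices and consider the concatenation $\alpha \concat (C(K), C(K+1), \ldots)$. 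The two halves are individually bad, and for every $i < \length{\alpha}$ and $k \ge K$ we have $C(k) \notin U_i$, i.e.\ $\alpha(i) \not\le_P C(k)$, so the concatenation is an infinite bad extension of $\alpha$, contradicting non-extendibility. Hence the positive case must occur, delivering the desired $B$.

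I do not foresee any serious obstacles: the first part is a direct concatenation argument driven by transitivity, and the second reduces to a pigeonhole applied to an arbitrary pre-existing infinite bad sequence in $P$. The most delicate point is the bookkeeping in the negative case of the pigeonhole, namely verifying that the tail $C[K:]$ can be safely appended to $\alpha$; this is immediate from the definition of $U_i$ together with the choice of $K$. The parenthetical consequence that $\alpha \trianglelefteq \beta$ for every initial segment $\beta$ of $B$ is then automatic from the existential witness $i$ produced for $B$, since the same $i$ works for every finite truncation.
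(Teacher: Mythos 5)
Your proof is correct and follows essentially the same route as the paper's: Part~1 is the same transitivity argument (append a tail of the witnessing extension of $\alpha$ to $\beta$, and rule out cross-comparisons via $\alpha(i)\le_P\beta(j)\le_P\gamma(k)$ contradicting badness of $\alpha\concat\gamma$), and Part~2 is the same pigeonhole on whether some $\alpha(i)$ lies $\le_P$-below infinitely many terms of a fixed infinite bad sequence, with the negative case contradicting non-extendibility by appending a sufficiently late tail. The only cosmetic difference is that the paper appends the tail of $A$ starting at position $i+1$ rather than at $\length{\alpha}$, which changes nothing.
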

\begin{proof}
	To prove the first part of the theorem, fix $\alpha\trianglelefteq \beta$ and let $A\in\Baire$ be an infinite $\le_P$-bad sequence extending $\alpha$. Let also $i<\length{\alpha}$ be a witness for $\alpha\trianglelefteq \beta$. For every $j>i$ and every $k<\length{\beta}$, $\beta(k)\not\le_P A(j)$ (otherwise $A(i) = \alpha(i) \le_P\beta(k)\le_P A(j)$ would contradict the fact that $A$ is a $\le_P$-bad sequence), which implies that $\beta$ is extendible.

	Assume now that $\alpha$ is non-extendible and let $F \in \Baire$ be a $\le_P$-bad sequence. We show that there is $i<\length{\alpha}$ and infinitely many $k$ such that $\alpha(i) <_P F(k)$. This is enough to conclude the proof, as we could take $B$ as any subsequence of $F$ with $\alpha(i) <_P B(k)$ for every $k$ (i.e.\ $\alpha \trianglelefteq B$). 

	Assume that, for every $i<\length{\alpha}$ there is $k_i$ such that for every $k\ge k_i$, $\alpha(i)\not \le_P F(k)$ (since $P$ is a partial order, there can be at most one $k$ such that $\alpha(i)=F(k)$). Since $\alpha$ is finite, we can take $k:=\max_{i<\length{\alpha}} k_i$ and consider the sequence which extends $\alpha$ by the tail $F(k+1), F(k+2), \dots$ of $F$. We have now reached a contradiction as this is an infinite $\le_P$-bad sequence extending $\alpha$. 
\end{proof}

\begin{theorem}
	\thlabel{theo:dsproducts}
	Let $f$ be a problem. The following are equivalent:
	\begin{enumerate}
		\item $f \leqW \mflim$
		\item $\parallelization{\ACC_\mathbb{N}} \times f \leqW \mflim$
		\item $\parallelization{\ACC_\mathbb{N}} \times f \leqW \DS$
		\item $\parallelization{\ACC_\mathbb{N}} \times f \leqW \BS$
	\end{enumerate}
\end{theorem}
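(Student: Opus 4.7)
The plan is to establish the cycle $(1) \Rightarrow (2) \Rightarrow (3) \Rightarrow (4) \Rightarrow (1)$, with the first three implications being short and the fourth being the substantive content.

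For $(1) \Rightarrow (2)$: combining $f \leqW \mflim$ with $\widehat{\ACC_\mathbb{N}} \leqW \mflim$ and the parallelizability of $\mflim$ gives
\[ \widehat{\ACC_\mathbb{N}} \times f \leqW \mflim \times \mflim \leqW \widehat{\mflim} \equivW \mflim. \]
For $(2) \Rightarrow (3)$, I would use $\mflim \equivW \Det{\DS} \leqW \DS$. For $(3) \Rightarrow (4)$, I would invoke $\DS \leqW \BS$, which is immediate from \thref{theo:bsdssep}.

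For the main implication $(4) \Rightarrow (1)$, suppose $\widehat{\ACC_\mathbb{N}} \times f \leqW \BS$ via continuous functionals $\Phi,\Psi$. Given an $f$-instance $x$, the plan is to produce some $y \in f(x)$ using $\mflim$ only. The key idea is to exploit the freedom of the $\widehat{\ACC_\mathbb{N}}$-component as a \emph{probe}: starting from the all-empty $\widehat{\ACC_\mathbb{N}}$-input $p_0$, I would compute $P = \Phi(x,p_0)$ and enumerate its finite bad sequences. For each finite bad sequence $\sigma$, the continuous functional $\Psi(x,p_0,\sigma\concat?)$ eventually commits to finite prefixes of both the $\widehat{\ACC_\mathbb{N}}$-solution and the $f$-solution. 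Using $\mflim$, I would iteratively refine the $\widehat{\ACC_\mathbb{N}}$-input: whenever $\Psi$ commits to a specific value $q_i$ at the $i$-th $\widehat{\ACC_\mathbb{N}}$-coordinate, one can ``veto'' it by enumerating $q_i$ into $A_i$ in a refined $p$, thereby forcing the reduction to produce a different $\widehat{\ACC_\mathbb{N}}$-output on subsequent runs and, crucially, potentially a different $f$-commitment.

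\thref{lem} is the pivotal technical tool here: since every finite bad sequence $\sigma$ is $\trianglelefteq$-below some infinite bad sequence $B$, the $f$-commitments observed on $\sigma$ correspond to genuine executions of the reduction and therefore to genuine elements of $f(x)$, provided we are careful in how we decide to stop refining. The main obstacle I expect is showing that the veto-and-refine process stabilizes in the limit so that a single $\mflim$-computation suffices. Naively, each veto consumes one of countably many $\widehat{\ACC_\mathbb{N}}$-coordinates, so convergence is not automatic. The argument should exploit that any incorrect $f$-commitment must be contradicted on some $\trianglelefteq$-extension witnessed by \thref{lem}, so it can only survive finitely many rounds of refinement before stabilizing; translating this insight into a concrete $\mflim$-procedure — with the right quantifier complexity and bookkeeping of which coordinates have been committed to or vetoed — is the technical heart of the proof.
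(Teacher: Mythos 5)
You have the right skeleton for the cycle $(1)\Rightarrow(2)\Rightarrow(3)\Rightarrow(4)\Rightarrow(1)$, and the intuition of exploiting the freedom in the $\widehat{\ACC_\mathbb{N}}$-component as a ``veto'' is indeed what makes the substantive implication $(4)\Rightarrow(1)$ go through. But the specific mechanism you sketch does not quite work, and the gap you flag at the end (``showing that the veto-and-refine process stabilizes'') is exactly where your version of the argument would come apart.

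Two concrete problems. First, you build $P=\Phi(x,p_0)$ from a fixed ``all-empty'' input $p_0$ and then ``iteratively refine'' the $\widehat{\ACC_\mathbb{N}}$-input; but each refinement changes the instance of $\BS$ that $\Phi$ computes, hence the partial order and its finite bad sequences. There is a circular dependency that you never resolve: the set of bad sequences you want to probe depends on the very $\widehat{\ACC_\mathbb{N}}$-input you want to refine in response to what $\Psi$ says on those bad sequences. The paper's construction avoids this by building the $\widehat{\ACC_\mathbb{N}}$-input $q$ \emph{once}, online and in parallel with $P=\Phi(q,x)$; at any finite stage only a finite prefix of $q$ and of $P$ exists, and consistency is maintained because each coordinate of $q$ receives at most one veto. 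Second, the goal of the veto is not what you think. You aim to force $\Psi$ to change its $f$-commitment, and then assert that ``the $f$-commitments observed on $\sigma$ correspond to genuine elements of $f(x)$''; but that is false in general — $\Psi$'s output on a finite bad sequence $\sigma$ need not be a valid $f$-solution unless $\sigma$ extends to an \emph{infinite} bad sequence. The paper's trick is different and sharper: allocate one $\ACC_\mathbb{N}$-coordinate $q_\sigma$ \emph{per finite sequence $\sigma$}, and when you observe some $\tau\trianglerighteq^P\sigma$ on which $\Psi$ answers that coordinate, veto that single answer. By \thref{lem}, this makes $\sigma$ non-extendible, and crucially it makes the set $W$ of non-extendible finite bad sequences \emph{c.e.\ by construction}. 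A single call to $\mflim$ then yields the characteristic function of $W$, after which one greedily grows an infinite bad sequence avoiding $W$, and applying $\Psi$ to that infinite sequence (rather than to any finite prefix) gives a correct $f$-solution. The ``stabilization'' worry you raise simply never arises in this framing, because there is no iterated refinement: $q$ is built once, $W$ is enumerated once, and $\mflim$ is invoked once. That indexing-by-$\sigma$ idea and the reduction of the problem to ``make $W$ c.e.'' is the missing key insight.
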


This theorem provides another example of a ``parallel quotient'' (cf.\ \cite[Remark 3.11]{dghpp20}). See Section \ref{sec:quotient} for details.

\begin{proof}
	The implication from (1) to (2) holds because $\parallelization{\ACC_\mathbb{N}} \times \mflim \equivW \mflim$. The implications from (2) to (3) and from (3) to (4) hold because $\mflim \leqW \DS \leqW \BS$ \cite[Theorem 4.16]{goh-pauly-valenti}.
	For the implication from (4) to (1), we consider a name $x$ for an input to $f$ together with witnesses $\Phi,\Psi$ for the reduction to $\BS$.
	We show that, from them, we can uniformly compute an input $q$ to $\parallelization{\ACC_\mathbb{N}}$ together with an enumeration of the set $W$ of all finite sequences $\sigma$ in the (non-well) partial order $P$ built by $\Phi$ on $(q,x)$ such that $\sigma$ does not extend to an infinite bad sequence. We can then use $\mflim$ to obtain the characteristic function of $W$.
	Having access to this lets us find an infinite bad sequence in $P$ greedily by avoiding sequences in $W$.
	From such a bad sequence (and $(q,x)$), $\Psi$ then computes a solution to $f$ for $x$.
	
	It remains to construct $q = \sequence{q_\sigma}{\sigma\in\mathbb{N}}$ and $W$ to achieve the above.
	At the beginning, $W$ is empty, and we extend each $q_\sigma$ in a way that removes no solution from its $\ACC_\mathbb{N}$-instance.
	As we do so, for each $\sigma \notin W$ (in parallel), we monitor whether the following condition has occurred:
	\begin{quote}
	$\sigma$ is a bad sequence in $P$ (as computed by the finite prefix of $(q,x)$ built/observed thus far),
	and there is some (finite) bad sequence $\tau$ in $P$ such that
	\begin{itemize}
		\item $\sigma \trianglelefteq^P \tau$,
		\item the functional $\Psi$, upon reading the current prefix of $(q,x)$ and $\tau$,
		produces some output $m$ for the $\ACC_\mathbb{N}$-instance indexed by $\sigma$.
	\end{itemize} 
	\end{quote}
	Once the above occurs for $\sigma$ (if ever), we remove $m$ as a valid solution to $q_\sigma$ by enumerating it.
	This ensures that $\tau$, and hence $\sigma$ by \thref{lem}, cannot extend to an infinite bad sequence in $P$.
	We shall then enumerate $\sigma$ into $W$.
	This completes our action for $\sigma$, after which we return to monitoring the above condition for sequences not in $W$.
	
	It is clear that each $q_\sigma$ is an $\ACC_\mathbb{N}$-instance (with solution set $\mathbb{N}$ if the condition is never triggered, otherwise with solution set $\mathbb{N}\setminus\{m\}$).
	Hence $P:=\Phi(q,x)$ is a non-well partial order.
	As argued above, no $\sigma \in W$ extends to an infinite bad sequence in $P$.
	Conversely, suppose $\sigma$ is a bad sequence in $P$ which does not extend to an infinite bad sequence.
	Since $P$ is non-wqo, by \thref{lem} there is an infinite bad sequence $r$ such that $\sigma \trianglelefteq^P r$.
	Then $\Psi$ has to produce all $\ACC_\mathbb{N}$ answers upon receiving $(q,x)$ and $r$,
	including an answer to $q_\sigma$. By continuity, this answer is determined by finite prefixes only. In particular, after having constructed a sufficiently long prefix of $q$,
	some finite prefix $\tau$ of $r$ will trigger the condition for $\sigma$ (unless something else triggered it previously), which ensures that $\sigma$ gets placed into $W$.
	This shows that $W$ contains exactly the non-extendible finite bad sequences in $P$,
	thereby concluding the proof.
\end{proof}
	
In other words, this theorem can be restated saying that $\mflim$ is the strongest problem such that its parallel product with $\parallelization{\ACC_\mathbb{N}}$ is reducible to either $\mflim$, $\DS$, $\BS$. In symbols,
\begin{align*}
	\mflim & \weiequiv \max_{\weireducible} \{ h \st \parallelization{\ACC_\mathbb{N}} \times h \weireducible \mflim\}\\
	 	& \weiequiv\max_{\weireducible} \{ h \st \parallelization{\ACC_\mathbb{N}} \times h \weireducible \DS\} \\
		& \weiequiv \max_{\weireducible} \{ h \st \parallelization{\ACC_\mathbb{N}} \times h \weireducible \BS\}. 
\end{align*}
This characterizes the ``parallel quotient'' of $\DS$ and $\BS$ over $\parallelization{\ACC_\mathbb{N}}$. The parallel quotient will be discussed more extensively in Section \ref{sec:quotient}.

\begin{corollary} \thlabel{cor:ACC_DS_lim}
If $f$ is a parallelizable problem (i.e., $f \equivW \parallelization{f}$) with $\ACC_\mathbb{N} \leqW f \leqW \BS$, then $f \leqW \mflim$.
\end{corollary}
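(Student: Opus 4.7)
The plan is to reduce directly to \thref{theo:dsproducts}. The key observation is that parallelizability of $f$ together with $\ACC_\mathbb{N} \leqW f$ allows us to absorb a copy of $\parallelization{\ACC_\mathbb{N}}$ into $f$ itself.

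First, I would note that $\parallelization{\ACC_\mathbb{N}} \leqW \widehat{f}$ because parallelization is monotone and $\ACC_\mathbb{N} \leqW f$ by hypothesis. Since $f$ is parallelizable, $\widehat{f} \equivW f$, so $\parallelization{\ACC_\mathbb{N}} \leqW f$. Next, using the standard fact that $f \times f \leqW \widehat{f} \equivW f$ (one encodes the two instances as the first two entries of a sequence of $f$-instances, padding the rest with any fixed instance), we obtain
\[ \parallelization{\ACC_\mathbb{N}} \times f \leqW f \times f \leqW f. \]
Combining with the hypothesis $f \leqW \BS$ yields $\parallelization{\ACC_\mathbb{N}} \times f \leqW \BS$.

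At this point, the implication (4) $\Rightarrow$ (1) of \thref{theo:dsproducts} applies directly and delivers $f \leqW \mflim$, as desired. There is no serious obstacle here: the whole argument is a short diagram chase, and the heavy lifting has already been done in \thref{theo:dsproducts}. The only minor point to be careful about is the reduction $f \times f \leqW \widehat{f}$, which is immediate once one has a fixed $f$-instance available to pad the tail of the sequence (and any instance in $\dom(f)$ works, since $\dom(f)$ is nonempty whenever $\ACC_\mathbb{N} \leqW f$).
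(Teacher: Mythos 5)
Your proof is correct and follows essentially the same route as the paper: observe $\widehat{\ACC_\mathbb{N}} \times f \leqW f \leqW \BS$ using parallelizability and monotonicity, then apply the implication $(4) \Rightarrow (1)$ of \thref{theo:dsproducts}. The paper states this more tersely, but the underlying chain of reductions is identical.
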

\begin{proof}
Since $\ACC_\mathbb{N} \leqW f \leqW \BS$ and $f$ is parallelizable, we have $\parallelization{\ACC_\mathbb{N}} \times f \leqW f \leqW \BS$. By the previous theorem, $f \leqW \mflim$.
\end{proof}

The negative answer to \cite[Question 6.1]{goh-pauly-valenti} is an immediate consequence of the following result:

\begin{corollary} \thlabel{cor:KL_DS}
$\KL \nleqW \BS$.
\end{corollary}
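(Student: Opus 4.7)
The plan is to derive a contradiction by combining \thref{cor:ACC_DS_lim} with the known strict separation $\mflim \leW \KL$ recalled in Section~\ref{sec:background}. Specifically, I aim to apply \thref{cor:ACC_DS_lim} with $f = \KL$, which requires verifying three hypotheses: $\KL$ is parallelizable, $\ACC_\mathbb{N} \leqW \KL$, and (for contradiction) $\KL \leqW \BS$.

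First I would recall that $\KL$ is parallelizable: this was noted in the background section via the equivalence $\parallelization{\RT{1}{2}} \equivW \KL$, so $\widehat{\KL} \equivW \KL$. Next, the chain $\parallelization{\ACC_\mathbb{N}} \leW \mflim \leW \KL$ from the background gives in particular $\ACC_\mathbb{N} \leqW \KL$. With these two facts in hand, the supposition $\KL \leqW \BS$ (for contradiction) places $\KL$ in exactly the class of problems covered by \thref{cor:ACC_DS_lim}, so we would conclude $\KL \leqW \mflim$, directly contradicting $\mflim \leW \KL$.

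There is essentially no obstacle here, since all the real work has been packaged into \thref{theo:dsproducts} and its corollary \thref{cor:ACC_DS_lim}; the step that would have been hard --- showing that the computational strength of $\BS$ collapses to $\mflim$ when one insists on simultaneously solving parallel $\ACC_\mathbb{N}$-instances --- is precisely what \thref{theo:dsproducts} already establishes. The only thing to be careful about is citing the parallelizability of $\KL$ and the comparison $\ACC_\mathbb{N} \leqW \KL$ explicitly, so that the application of \thref{cor:ACC_DS_lim} is immediate.
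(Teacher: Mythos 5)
Your proposal is correct and matches the paper's own proof: both invoke \thref{cor:ACC_DS_lim} with $f = \KL$, using the parallelizability of $\KL$, the reduction $\ACC_\mathbb{N} \leqW \KL$, and the known fact $\KL \nleqW \mflim$ to reach the contradiction. The only cosmetic difference is that you phrase it as an explicit proof by contradiction while the paper states the conclusion directly.
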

\begin{proof}
	Recall that $\KL$ is parallelizable, $\ACC_\mathbb{N} \leqW \KL$, yet $\KL \nleqW \mflim$, hence the claim follows from \thref{cor:ACC_DS_lim}.
\end{proof}

Similarly, the negative answer to \cite[Question 6.2]{goh-pauly-valenti} can be obtained as follows:

\begin{corollary} \label{cor:wList_DS}
$\mathsf{wList}_{2^{\mathbb{N}},\leq\omega} \nleqW \BS$.
\end{corollary}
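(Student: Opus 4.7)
The plan is to mirror the proof of Corollary \ref{cor:KL_DS} almost verbatim, substituting $\mathsf{wList}_{\Cantor,\leq\omega}$ for $\KL$ and invoking Corollary \ref{cor:ACC_DS_lim}. The three ingredients needed are: (i) $\mathsf{wList}_{\Cantor,\leq\omega}$ is parallelizable, (ii) $\ACC_\mathbb{N} \leqW \mathsf{wList}_{\Cantor,\leq\omega}$, and (iii) $\mathsf{wList}_{\Cantor,\leq\omega} \nleqW \mflim$. Once these are in hand, if $\mathsf{wList}_{\Cantor,\leq\omega} \leqW \BS$, then by parallelizability $\parallelization{\ACC_\mathbb{N}} \times \mathsf{wList}_{\Cantor,\leq\omega} \leqW \mathsf{wList}_{\Cantor,\leq\omega} \leqW \BS$, so by Theorem \ref{theo:dsproducts} we would get $\mathsf{wList}_{\Cantor,\leq\omega} \leqW \mflim$, contradicting (iii).

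For (i), a countable product of non-empty countable closed subsets of $\Cantor$ is, via a computable homeomorphism, again a non-empty countable closed subset of $\Cantor$, and enumerations can be recombined componentwise. For (ii), given an $\ACC_\mathbb{N}$-instance enumerating some $A \subseteq \mathbb{N}$ with $|A| \leq 1$, consider the closed set $C := \{0^\omega\} \cup \{0^k\concat 1 \concat 0^\omega \st k \notin A\}$, which is uniformly co-c.e.\ in $A$ and countable. Any enumeration of $C$ must contain at least one element distinct from $0^\omega$, and searching in parallel over the enumerated elements for the first bit equal to $1$ yields (in finitely many steps) some position $k$ with $k \notin A$, since the only non-$0^\omega$ members of $C$ have their unique $1$ at a position outside $A$.

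For (iii), the appropriate references are \cite{kmp20,CMVCantorBendixson}, where it is shown that $\mathsf{wList}_{\Cantor,\leq\omega}$ admits computable instances whose solutions cannot be produced by any limit-computable procedure, so that $\mathsf{wList}_{\Cantor,\leq\omega} \nleqW \mflim$. The main obstacle is external to the present paper: (iii) is the only substantive fact, while (i) and (ii) are essentially routine. Together with Corollary \ref{cor:ACC_DS_lim}, they close the argument.
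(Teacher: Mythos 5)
Your overall structure is exactly the paper's: invoke \thref{cor:ACC_DS_lim} with the facts that $\mathsf{wList}_{\Cantor,\leq\omega}$ is parallelizable, lies above $\ACC_\mathbb{N}$, and does not reduce to $\mflim$, the latter being cited from \cite{kmp20}. The paper simply cites \cite[Prop.\ 6.12, 6.13, Cor.\ 6.16]{kmp20} for all three ingredients (and in fact uses the stronger $\mflim \leqW \mathsf{wList}_{\Cantor,\leq\omega}$ for the middle one), whereas you attempt to supply direct arguments for (i) and (ii). Your argument for (ii) is fine and reasonably elementary.

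However, your sketch for (i) is incorrect. A countable product of non-empty countable closed subsets of $\Cantor$ need not be countable: already $\prod_n \{0,1\} \cong \Cantor$ is uncountable, so if each $C_n$ has at least two points the product has continuum many. The correct way to see parallelizability is not via products but via a disjoint-sum construction: embed $C_n$ into the cone $[0^n1]$ and take $C := \{0^\omega\} \cup \bigcup_n \{0^n 1\concat x \st x \in C_n\}$, which is closed (the only possible new limit point is $0^\omega$, already included) and countable, and from an enumeration of $C$ one can extract (with padding by repetition, since each $C_n$ is non-empty) an enumeration of each $C_n$. Alternatively, just cite \cite[Prop.\ 6.12]{kmp20} as the paper does. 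With this repair, your argument closes the gap and matches the paper's proof.
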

\begin{proof}
	As proved in \cite[Propositions 6.12, 6.13]{kmp20}, $\mathsf{wList}_{2^{\mathbb{N}},\leq\omega}$ is parallelizable and $\ACC_\mathbb{N} \leqW \mflim \leqW \mathsf{wList}_{2^{\mathbb{N}},\leq\omega}$. On the other hand, $\mathsf{wList}_{2^{\mathbb{N}},\leq\omega} \nleqW \mflim$ \cite[Corollary 6.16]{kmp20}, hence the claim follows from \thref{cor:ACC_DS_lim}.
\end{proof}

Continuous Weihrauch reducibility ($\leqW^*$) is a variant of Weihrauch reducibility defined via continuous functions in place of computable functions. Using a recent result of Pauly and Sold\`a \cite{paulysolda},
we can characterize the parallelizations of first-order problems which are reducible to $\DS$ up to continuous Weihrauch reducibility.

\begin{corollary}
If $\parallelization{f} \leqW^* \DS$, then $\fop{f} \leqW^* \CNatural$.
Therefore, for any first-order $f$,
\[ \parallelization{f} \leqW^* \DS \qquad \text{if and only if} \qquad f \leqW^* \CNatural. \]
\end{corollary}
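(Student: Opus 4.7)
My plan is to combine the cited Pauly-Sold\`a characterization, which (in the form needed here) asserts that for any first-order problem $g$ one has $g \leqW^* \CNatural$ if and only if $\widehat{g} \leqW^* \mflim$, with the continuous relativization of \thref{theo:dsproducts}. Since $\fop{f}$ is first-order, it suffices by Pauly-Sold\`a to prove $\widehat{\fop{f}} \leqW^* \mflim$.

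From the hypothesis $\widehat{f} \leqW^* \DS$ and monotonicity of parallelization we immediately obtain $\widehat{\fop{f}} \leqW \widehat{f} \leqW^* \DS$. The continuous relativization of \thref{theo:dsproducts} (immediate since the problems $\widehat{\ACC_\mathbb{N}}$, $\mflim$, $\DS$ are absolute and the proof of \thref{theo:dsproducts} goes through verbatim with continuous forward and backward functionals) then reduces our goal to establishing $\widehat{\ACC_\mathbb{N}} \times \widehat{\fop{f}} \leqW^* \DS$.

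I expect this last reduction to be the main obstacle. Since $\widehat{\fop{f}}$ is a parallelization, we have $\widehat{\ACC_\mathbb{N}} \times \widehat{\fop{f}} \equivW \widehat{\ACC_\mathbb{N} \times \fop{f}}$, and using $\fop{f} \leqW \widehat{f}$ it suffices to establish $\widehat{\ACC_\mathbb{N}} \times \widehat{f} \leqW^* \DS$. The strategy is to adapt the monitoring construction from the proof of \thref{theo:dsproducts}: given an $\widehat{\ACC_\mathbb{N}}$-instance $(p_n)_n$ and a $\widehat{f}$-instance $(x_n)_n$, we apply the continuous forward functional $\Phi$ witnessing $\widehat{f} \leqW^* \DS$ to $(x_n)_n$ to obtain an ill-founded linear order $L$, and we then need to weave the $\ACC_\mathbb{N}$-monitoring into $L$ (or into a related non-well order, which is permissible on the $\BS$-side via \thref{prop:BS_qo_po}, and transferable to $\DS$ on the linear-order side) so that a single infinite descending sequence simultaneously decodes the $\ACC_\mathbb{N}$-answers at each coordinate and the $\widehat{f}$-solutions. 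The key tool here is \thref{lem}, which lets us identify which finite bad sequences remain extendable in the combined structure, and therefore when to remove candidate $\ACC_\mathbb{N}$-answers in exactly the manner used in the proof of \thref{theo:dsproducts}. Finally, the ``Therefore'' clause is immediate: for first-order $f$ we have $\fop{f} = f$, so the forward direction is the first statement of the corollary, while the converse follows from $f \leqW^* \CNatural \Rightarrow \widehat{f} \leqW^* \widehat{\CNatural} \equivW \mflim \leqW \DS$.
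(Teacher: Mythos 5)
There is a genuine gap at the heart of the argument. You misidentify what the Pauly--Sold\`a result does for this proof. The equivalence you attribute to it --- for first-order $g$, $g \leqW^* \CNatural$ iff $\widehat{g} \leqW^* \mflim$ --- is true, but it is a triviality that follows from $\widehat{\CNatural} \equivW \mflim$ and $\fop{\mflim} \equivW \CNatural$, and it does not supply the missing leverage. The theorem the paper actually cites is a dichotomy: either $\fop{f}$ is continuous, or $\ACC_\mathbb{N} \leqW^* f$. That dichotomy is the crucial input, because once one knows $\ACC_\mathbb{N} \leqW^* f$ one gets $\widehat{\ACC_\mathbb{N}} \times \widehat{f} \leqW^* \widehat{f} \times \widehat{f} \equivW \widehat{f} \leqW^* \DS$ essentially for free, and then \thref{theo:dsproducts} gives $\widehat{f} \leqW^* \mflim$; the continuous case is trivial on its own. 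Without that dichotomy, the reduction $\widehat{\ACC_\mathbb{N}} \times \widehat{f} \leqW^* \DS$ that your plan hinges on is simply not available from the hypothesis $\widehat{f} \leqW^* \DS$.

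Your proposed fallback --- to ``weave'' the $\ACC_\mathbb{N}$-monitoring into the linear order produced by the given reduction --- cannot work. In the proof of \thref{theo:dsproducts} one is handed a reduction whose forward functional already consumes the $\widehat{\ACC_\mathbb{N}}$-instance together with the $f$-instance, and the construction exploits this by building the $\widehat{\ACC_\mathbb{N}}$-instance $q$ dynamically. Here you only have a reduction $\widehat{f} \leqW^* \DS$: its forward functional never sees the $\widehat{\ACC_\mathbb{N}}$-coordinates, so the ill-founded linear order $L = \Phi((x_n)_n)$ carries no information about them and a single descending sequence through $L$ cannot be made to answer them. Put differently, by \thref{theo:dsproducts} itself, $\widehat{\ACC_\mathbb{N}} \times \widehat{f} \leqW^* \DS$ is \emph{equivalent} to the conclusion $\widehat{f} \leqW^* \mflim$, so trying to establish it directly without some extra input (the Pauly--Sold\`a dichotomy) is circular. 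The ``Therefore'' clause at the end of your argument is fine, but the main implication is not proved.
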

\begin{proof}
If $\fop{f}$ is continuous, the conclusion of the first statement is satisfied.
Otherwise, $\ACC_\mathbb{N} \leqW^* f$ by \cite[Theorem 1]{paulysolda}.
The relativization of \thref{theo:dsproducts} then implies $\parallelization{f} \leqW^* \mflim$.
We conclude $\fop{f} \leqW^* \fop{\mflim} \equivW \CNatural$.
The second statement then follows from $\parallelization{\CNatural} \equivW \mflim \leqW \DS$.
\end{proof}

\section{The finitary part and deterministic part of $\BS$}
\label{sec:fin_BS_DS}

In this section, we show that $\BS$ and $\DS$ cannot be separated by looking at their respective finitary or deterministic parts. Recall from \cite[Theorems 4.16, 4.31]{goh-pauly-valenti} that $\Det{\DS} \weiequiv \mflim$ and $\Fin{k}{\DS} \weiequiv \RT{1}{k}$. Since both the deterministic and the finitary parts are monotone, this implies that $\mflim \weireducible \Det{\BS}$ and $\RT{1}{k}\weireducible \Fin{k}{\BS}$, so we only need to show that the converse reductions hold.

Let $(Q,\preceq_Q)$ be a countable quasi-order. We call a subset $A \subseteq Q$ \emph{dense} if for every $w \in Q$ there is some $u \succeq_Q w$ with $u \in A$. We call it \emph{upwards-closed} if $w \in A$ and $w \preceq_Q u$ implies $u \in A$. By $\SDDCC$ we denote the following problem: Given a (non-empty) quasi-order $(Q,\preceq_Q)$ and a $\boldfaceSigma^1_1$-code for a dense upwards-closed subset $A \subseteq Q$, find some element of $A$. 
We can think of a $\boldfaceSigma^1_1$-code for $A$ as a sequence $\sequence{T_n}{n\in Q}$ of subtrees of $\baire$ such that $T_n$ is ill-founded if and only if $n \in A$.

\begin{proposition}
	\thlabel{thm:FOP(BS)<=SDDCC}
$\fop{\BS} \leqW \SDDCC$.
\end{proposition}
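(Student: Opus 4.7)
The plan is to show that every problem $g \leqW \BS$ with codomain $\mathbb{N}$ also satisfies $g \leqW \SDDCC$; since $\fop{\BS}$ is by definition the Weihrauch supremum of such $g$, this suffices. Fix such a reduction via computable forward and backward functionals $\Phi,\Psi$. Given an input $x \in \dom(g)$, I construct a $\SDDCC$-instance $(P',A)$ as follows. Let $P=\Phi(x)$, viewed as a non-well partial order via \thref{prop:BS_qo_po}. Take $P'$ to consist of (codes for) the finite $P$-bad sequences that extend to an infinite $P$-bad sequence, ordered by prefix extension. Membership in $P'$ is $\Sigma^1_1$ (an existential quantifier over infinite bad extensions), and the prefix relation restricted to $P'$ is $\Sigma^1_1$, so $P'$ has a $\Sigma^1_1$ code as a partial order. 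Define $A := \{\tau\in P' \st \Psi(x,\tau)\downarrow\}$, which is likewise $\Sigma^1_1$. The backward functional of the reduction, on receiving any $\tau\in A$, will output $\Psi(x,\tau)$.

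The verification has three parts. First, $A$ is upwards closed: if $\tau\in A$ has $\Psi$ halting after reading only the prefix $\tau$, then for any $\tau'\in P'$ with $\tau\prefix\tau'$ the same computation still halts with the same value, so $\tau'\in A$. Second, $A$ is dense in $P'$: any $\sigma\in P'$ is a prefix of some infinite $P$-bad sequence $B$, and $\Psi(x,B)$ is defined by the hypothesis $g\leqW\BS$, so by continuity of $\Psi$ there is a finite prefix $\tau$ of $B$ longer than $\sigma$ on which $\Psi(x,\tau)$ already halts; this $\tau$ lies in $P'$ (being a prefix of $B$) and hence in $A$. Third, correctness: for any $\tau\in A$ returned by $\SDDCC$, pick any infinite bad $B\supseteq\tau$ (which exists since $\tau\in P'$); by monotonicity of the computation, $\Psi(x,\tau)=\Psi(x,B)$, and the latter is a $g$-solution to $x$ by assumption on the reduction $g\leqW\BS$.

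The main subtlety to keep in mind is that prefix-extending an extendible bad sequence can in general produce a non-extendible one, so if one tried to let $P'$ consist of \emph{all} finite bad sequences under prefix, density would still hold but upwards closure would fail. Restricting $P'$ itself to the extendible sequences is what makes both properties hold simultaneously, at the cost of rendering $P'$'s domain $\Sigma^1_1$---which is precisely what $\SDDCC$ was designed to accommodate. Consequently, the machinery of the preorder $\trianglelefteq$ from \thref{lem} is not needed for this argument, although its underlying intuition (that extendibility propagates along suitable continuations of a bad sequence) is implicit in the density clause.
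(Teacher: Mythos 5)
Your proof is correct, but it takes a genuinely different route from the paper's. The paper sets up the $\SDDCC$ instance on a \emph{computable} ambient poset (the ``sufficiently long'' finite bad sequences) equipped with the order $\trianglelefteq^P$, with $A$ being the extendible sequences in it; both density and upwards-closure are then obtained via \thref{lem}, which says in particular that even a non-extendible $\alpha$ has $\trianglelefteq^P$-extensions that are extendible. You instead take the ambient poset to be the $\Sigma^1_1$ set of extendible finite bad sequences under the prefix order, with $A$ being the ones where $\Psi$ already halts. This sidesteps $\trianglelefteq^P$ and \thref{lem} entirely; upwards-closure and correctness become direct continuity arguments, and density follows because any extendible $\sigma$ is a prefix of some infinite bad $B$ on which $\Psi(x,B)$ converges. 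Both versions are valid since $\SDDCC$ accepts $\Sigma^1_1$-coded partial orders; your version trades a slightly more complex domain for a more elementary verification.

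One small correction to your closing remark: if one keeps $P'$ as \emph{all} finite bad sequences under prefix and keeps $A = \{\tau : \Psi(x,\tau)\downarrow\}$, then upwards-closure in fact still \emph{holds} by continuity of $\Psi$; what breaks is (i) density, since a non-extendible $\sigma$ need not have any extension on which $\Psi$ converges, and more fundamentally (ii) correctness, since a non-extendible $\tau$ with $\Psi(x,\tau)\downarrow$ can produce a wrong answer. The paper's use of $\trianglelefteq^P$ is exactly what restores density for non-extendible sequences (via the second clause of \thref{lem}) while keeping the ambient poset computable; your restriction of the domain achieves the same effect by a different mechanism, at the cost of a $\Sigma^1_1$ domain.
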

\begin{proof}
	Let $f$ be a problem with codomain $\mathbb{N}$ and assume $f\weireducible \BS$ via $\Phi,\Psi$. Fix $x\in \dom(f)$.
	Let $(P,\leq_P)$ denote the non-well partial order defined by $\Phi(x)$.
	We say that a finite $\leq_P$-bad sequence $\beta$ is \emph{sufficiently long} if $\Psi(x, \beta)$ returns a natural number in at most $\length{\beta}$ steps.

Consider the non-empty $\Sigma^{1,x}_1$ set of sufficiently long finite extendible bad sequences. To show that $f\weireducible \SDDCC$, it is enough to notice that \thref{lem} implies that the aforementioned set is dense and upwards-closed with respect to the quasi-order $\trianglelefteq^P$ (\thref{def:bad_seq_quasiorder}).
\end{proof}

\begin{lemma}
	\thlabel{thm:sddcc_cantor}
$\SDDCC \equivW \SDDCC(\cantor, \cdot)$, where the latter denotes the restriction of the former to $\cantor$ with the prefix ordering $\prefix$.
\end{lemma}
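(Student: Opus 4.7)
The plan is to prove $\SDDCC\weireducible\SDDCC(\cantor,\cdot)$; the converse is immediate since $\SDDCC(\cantor,\cdot)$ is a restriction of $\SDDCC$. Given an instance $((P,\le_P),A)$ of $\SDDCC$, the strategy is to construct a computable, $\prefix$-monotone function $c\function{\cantor}{P}$ and pull back $A$ along $c$.

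To define $c$, fix a self-delimiting prefix-free encoding of natural numbers by binary strings. Any $\sigma\in\cantor$ then parses uniquely as a concatenation of complete codes for some numbers $n_0,\ldots,n_{k-1}$, possibly followed by an incomplete final block which we discard. Starting from a fixed default $p_0\in P$ (for instance, the $\mathbb{N}$-least element of $P$), greedily build a $\le_P$-chain: in round $i$, if $n_i\in P$ and the current endpoint is $\le_P n_i$, advance to $n_i$; otherwise leave the chain unchanged. Let $c(\sigma)$ be the endpoint of the resulting chain. By construction, $c$ is computable uniformly in $(P,\le_P)$, and any $\sigma\prefix\tau$ forces $c(\sigma)\le_P c(\tau)$, since processing $\tau$ only performs additional (monotone) advance-steps after finishing $\sigma$.

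Define $A':=\{\sigma\in\cantor\st c(\sigma)\in A\}$, which is $\boldfaceSigma^1_1$ uniformly in the input since $c$ is computable and $A$ is $\boldfaceSigma^1_1$. Upward closedness of $A'$ in $(\cantor,\prefix)$ follows from the monotonicity of $c$ together with the upward closedness of $A$. For density, given $\sigma\in\cantor$, apply density of $A$ to $c(\sigma)\in P$ to obtain some $q\in A$ with $c(\sigma)\le_P q$, and extend $\sigma$ by appending the prefix-free code for $q$ (together with any bits needed to complete a possibly-garbled tail block of $\sigma$) to obtain $\tau\extend\sigma$ with $c(\tau)=q\in A$. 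The backward functional simply outputs $c(\sigma)$ on input $\sigma\in A'$, which by definition lies in $A$.

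The only mildly delicate point is the design of the encoding, which must simultaneously ensure that (a) every $\sigma\in\cantor$ parses to a well-defined finite chain even if its tail is incomplete, and (b) from any $\sigma\in\cantor$ we can reach a $\tau\extend\sigma$ whose parse extends that of $\sigma$ by an arbitrary prescribed natural number; a standard prefix-free code for $\mathbb{N}$ delivers both properties, so I do not anticipate a substantive obstacle beyond bookkeeping the parse at the tail of $\sigma$.
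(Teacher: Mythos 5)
Your overall strategy matches the paper's: define a computable, $\prefix$-monotone labeling $c\function{\cantor}{P}$ by parsing a binary string as a sequence of codes for elements of $P$ and greedily building a $\le_P$-chain, then pull $A$ back along $c$. Computability of $c$, that $c^{-1}(A)$ is $\boldsymbol{\Sigma}^1_1$, and upward-closedness of $c^{-1}(A)$ via monotonicity of $c$ are all fine, and the backward functional is as you describe.

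However, the density argument as written has a genuine gap, and your property (b) is false for a standard injective prefix-free code. Take unary coding $n\mapsto 0^n1$: if $\sigma$ ends in a garbled tail $0^k$, every completion of that block encodes some $m\geq k$, so you cannot prescribe an arbitrary next number in the parse. The concrete failure mode in your argument: you first apply density to $c(\sigma)$ to get $q\in A$ with $c(\sigma)\le_P q$, and then complete the garbled tail. But completing the tail may advance the chain to some $m$ with $c(\sigma)\le_P m$ and $m\not\le_P q$ (nothing prevents all admissible completions from landing in $P$ above $c(\sigma)$ but $\le_P$-incomparable with $q$), after which appending $\mathrm{code}(q)$ is a no-op and $c(\tau)\neq q$. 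Two repairs: (i) reverse the order — first complete the garbled tail arbitrarily to reach a block boundary $\sigma'$, then apply density of $A$ to $c(\sigma')$ (which dominates $c(\sigma)$ by monotonicity) and append the code of the resulting $q'$; this needs no property (b). (ii) The paper's choice: use a redundant encoding $0^i1b$ with a \emph{commit bit} $b$, where the chain advances to $x_i$ only when $b=1$. Then every garbled tail of the form $0^j$ or $0^j1$ can be completed to a block with $b=0$, giving a guaranteed no-op completion, so the chain state at a block boundary can always be held at $c(\sigma)$ before appending the code for $q$. With either repair your proof goes through and is essentially the paper's proof.
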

\begin{proof}
	Clearly, we only need to show that $\SDDCC \leqW \SDDCC(\cantor, \cdot)$. Suppose we are given as input a countable quasi-order $(Q,\preceq_Q)$ and a $\boldfaceSigma^1_1$-code for a set $A \subseteq Q$ which is dense and upwards-closed. We shall define a labelling $\lambda\function{\cantor}{Q}$ which is computable uniformly in $(Q,\preceq_Q)$, such that $\lambda^{-1}(A)$ is dense and upwards-closed. This suffices to prove the claimed reduction, as the preimage of $A$ via $\lambda$ is uniformly $\Sigma^1_1$ relative to the input and, given $\sigma\in \lambda^{-1}(A)$, we may use the input $(Q,\preceq_Q)$ to compute $\lambda(\sigma)\in A$.

    The labelling $\lambda: \cantor \to Q$ is defined via an auxiliary function $\overline{\lambda}\function{Q\times \cantor}{Q}$ (useful to describe $\lambda$ in a recursive manner). First, for every $i \in \mathbb{N}$, $\overline{\lambda}(x, 0^i):=\overline{\lambda}(x, 0^i1):=  x$. To evaluate $\overline{\lambda}$ on other strings, we fix a standard $Q$-computable enumeration $\sequence{x_n}{n}$ of $Q$ and define, recursively:
	\[ \overline{\lambda}(x, 0^i1b\concat \sigma) := \begin{cases}
		\overline{\lambda}(x_i,\sigma) & \text{if } b=1 \text{ and } x \preceq_Q x_i \\
		\overline{\lambda}(x,\sigma) & \text{if } b=0 \text{ or } x \not\preceq_Q x_i. 
	\end{cases} \]
	We then define $\lambda(\sigma):=\overline{\lambda}(x_0,\sigma)$. It is clear that $\lambda$ is total and it is uniformly computable in $(Q,\preceq_Q)$. Rephrasing the definition, we can observe that, for every $x$ and $\sigma$, $\overline{\lambda}(x, 0^i10\concat\sigma) = \overline{\lambda}(x, \sigma)$. Besides, $\overline{\lambda}(x, 0^i11\concat \sigma) = \overline{\lambda}(\overline{\lambda}(x, 0^i11), \sigma)$. In particular, 
	\[ 
	\overline{\lambda}(x, 0^i11\concat \sigma)=\begin{cases}
		\overline{\lambda}(x_i,\sigma) & \text{if } x\preceq_Q x_i\\
		\overline{\lambda}(x,\sigma) 	& \text{otherwise}.
	\end{cases}
	\]
	Intuitively, $\lambda$ guides the construction of an ascending $\preceq_Q$-sequence from $x_0$. Indeed, we will show that $\lambda$ is weakly monotone, and therefore each $\sigma$ induces a finite ascending $\preceq_Q$-sequence from $x_0$ to $\lambda(\sigma)$. At the same time, the labelling is designed to be highly redundant, which we use to prove that $\lambda^{-1}(A)$ is dense. 

    To show that $\lambda^{-1}(A)$ is upwards-closed, we claim that for every $\sigma,\tau \in \cantor$, $\lambda(\sigma)\preceq_Q \lambda(\sigma\concat \tau)$. The claim is proved by first using structural induction on $\sigma$ to show that for all $x \in Q$, we have $x \preceq_Q \overline{\lambda}(x,\sigma)$. Using this fact, one can perform another structural induction on $\sigma$ to show that for all $x$ and $\tau$, we have $\overline{\lambda}(x,\sigma) \preceq_Q \overline{\lambda}(x,\sigma\concat\tau)$. Taking $x = x_0$ yields $\lambda(\sigma) \preceq_Q \lambda(\sigma\concat\tau)$. Since $A$ is upwards-closed in $Q$, it follows that $\lambda^{-1}(A)$ is upwards-closed in the prefix ordering.

    To prove that $\lambda^{-1}(A)$ is dense, fix $\rho\in \cantor$ and assume $\lambda(\rho)\notin A$. Since $A$ is dense, there is some $n$ such that $\lambda(\rho)\preceq_Q x_n \in A$. Observe that $\overline{\lambda}(\lambda(\rho), 0^n11) = \overline{\lambda}(x_n, \varepsilon) = x_n$, so it suffices to find some $\rho' \sqsupseteq \rho$ such that $\lambda(\rho') = \overline{\lambda}(\lambda(\rho), 0^n11)$. The construction of $\rho'$ depends on the form of $\rho$ (all the following claims can be proven by structural induction on $\rho$):
    \begin{itemize}
        \item if $\rho$ ends with $0$, then $\overline{\lambda}(x,\rho \concat 10 \concat \tau) = \overline{\lambda}(\overline{\lambda}(x,\rho), \tau)$, so $\lambda(\rho \concat 10^{n+1}11) = \overline{\lambda}(\lambda(\rho),0^n11)$;
        \item if $\rho$ ends with an odd number of $1$s, then $\overline{\lambda}(x,\rho \concat 0 \concat \tau) = \overline{\lambda}(\overline{\lambda}(x, \rho), \tau)$, so $\lambda(\rho \concat 0^{n+1}11) = \overline{\lambda}(\lambda(\rho), 0^n11)$;
        \item otherwise, $\rho$ is either $\varepsilon$ or ends with a positive even number of $1$s. Then $\overline{\lambda}(x,\rho \concat \tau) = \overline{\lambda}(\overline{\lambda}(x, \rho), \tau)$, so $\lambda(\rho \concat 0^n11) = \overline{\lambda}(\lambda(\rho), 0^n11)$.
    \end{itemize}
    	In all cases, we have found an extension of $\rho$ which maps to $x_n$, as desired.
\end{proof}

\begin{theorem} \thlabel{theo:finkbs}
	$\Fin{k}{\BS} \equivW \Fin{k}{\SDDCC} \equivW \Fin{k}{\DS} \equivW \RT{1}{k}$ for every $k$.
\end{theorem}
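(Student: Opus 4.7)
The plan is to establish the chain $\RT{1}{k} \equivW \Fin{k}{\DS} \leqW \Fin{k}{\BS} \leqW \Fin{k}{\SDDCC} \leqW \RT{1}{k}$. The leftmost equivalence is \cite[Thm.\ 4.31]{goh-pauly-valenti}. The inequality $\Fin{k}{\DS} \leqW \Fin{k}{\BS}$ is immediate from $\DS \leqW \BS$ together with monotonicity of $\Fin{k}{\cdot}$. For $\Fin{k}{\BS} \leqW \Fin{k}{\SDDCC}$: since $\Fin{k}{\BS} \leqW \fop{\BS}$ as noted in Section~\ref{sec:background}, and $\fop{\BS} \leqW \SDDCC$ by \thref{thm:FOP(BS)<=SDDCC}, the maximality defining $\Fin{k}{\SDDCC}$ among $\mathbf{k}$-valued problems below $\SDDCC$ yields $\Fin{k}{\BS} \leqW \Fin{k}{\SDDCC}$. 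The heart of the theorem is therefore $\Fin{k}{\SDDCC} \leqW \RT{1}{k}$.

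By \thref{thm:sddcc_cantor} it suffices to treat $\SDDCC(\cantor,\cdot)$. Let $f$ with codomain $\mathbf{k}$ reduce to $\SDDCC(\cantor,\cdot)$ via forward and backward functionals $\Phi, \Psi$, fix $x \in \dom(f)$, and let $A := \Phi(x)$ be the $\Sigma^1_1$ dense upwards-closed subset of $\cantor$. For each $i < k$ I would consider the c.e.\ set $V_i := \{\sigma \in \cantor : \Psi(x,\sigma) \downarrow = i\}$. After suitably normalizing $\Psi$---using that $A$ is upwards-closed in $(\cantor,\prefix)$, so any $\prefix$-extension of a valid $\SDDCC$-answer is itself valid---these $V_i$ can be arranged to be upwards-closed under $\prefix$. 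The key observation is then that $V_i \neq \emptyset$ implies $i \in f(x)$: given $\sigma \in V_i$, density of $A$ supplies some $\sigma' \extend \sigma$ with $\sigma' \in A$; upwards-closedness of $V_i$ then forces $\sigma' \in V_i$, so $\Psi(x,\sigma') = i$ is a valid $f$-solution.

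Since $A$ is non-empty and $A \subseteq \bigcup_i V_i$, at least one $V_i$ is non-empty, hence infinite (a non-empty upwards-closed subset of $\cantor$ is infinite). A computable enumeration of $\bigsqcup_i V_i$ then yields a total coloring $c: \mathbb{N} \to \mathbf{k}$ by setting $c(n)$ to the label of the $n$-th enumerated element; any color $i$ with $c^{-1}(i)$ infinite forces $V_i$ to be non-empty, hence $i \in f(x)$, so applying $\RT{1}{k}$ to $c$ returns a valid $f$-solution. The main obstacle I anticipate is the normalization step: in the discrete representation of $\cantor$ the functional $\Psi$ need not commit its output based on a $\prefix$-prefix of $\sigma$, and forcing $\Psi$ to do so while preserving correctness requires a careful padding-and-waiting procedure that exploits the upwards-closedness of $A$---after which the coloring argument is essentially routine.
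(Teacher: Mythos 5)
Your overall strategy---establishing the chain $\RT{1}{k} \equivW \Fin{k}{\DS} \leqW \Fin{k}{\BS} \leqW \Fin{k}{\SDDCC} \leqW \RT{1}{k}$ and reducing to $\SDDCC(\cantor,\cdot)$ via \thref{thm:sddcc_cantor}---matches the paper, and the first three links are handled correctly. The gap is exactly where you anticipate it: the normalization making $V_i = \{\sigma : \Psi'(x,\sigma)\downarrow = i\}$ upwards-closed under $\prefix$. This cannot be done while preserving the correctness of $\Psi'$ on $A$, and the failure is not a matter of careful padding---it is a direction mismatch. To make $V_i$ upwards-closed, $\Psi'(x,\sigma')$ must, for $\sigma \prefix \sigma'$, reproduce the answer already committed at $\sigma$; i.e.\ $\Psi'$ must output the color of some \emph{prefix} of its input. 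But $A$ is \emph{upwards}-closed, so $\sigma' \in A$ gives no information about its prefixes: the prefix $\tau$ whose answer $\Psi'$ copies may lie outside $A$, and then $\Psi(x,\tau)$ need not be in $f(x)$. A concrete instance: let $A = \{\sigma : |\sigma| \geq 3\}$ (dense, upwards-closed), $\Psi(x,\varepsilon) = 0$, $\Psi(x,\sigma) = 1$ for $|\sigma|\geq 3$, and $f(x) = \{1\}$. Any normalization that pushes the answer from $\varepsilon$ upward yields $V_0 = \cantor$, so your coloring returns $0 \notin f(x)$. Consequently ``$V_i \neq \emptyset \Rightarrow i \in f(x)$'' fails, and the subsequent enumeration-plus-$\RT{1}{k}$ step is unsound.

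The paper's proof resolves exactly this issue by pushing colors in the opposite direction: it defines $c(\tau) := \Psi(x,\rho)$ for a suitable \emph{extension} $\rho \extend \tau$ (the first one observed to halt). Now if $\tau \in A$ then $\rho \in A$ by upwards-closedness, so $c(\tau) \in f(x)$---the direction agrees with the closure property of $A$. The price is that $c$ is a coloring of $\cantor$, not $\mathbb{N}$, and a color that appears infinitely often need not be correct; one needs a color that appears \emph{densely} above some node to guarantee it hits $A$. This is supplied by the Chubb--Hirst--McNicholl tree theorem, and the fact that this tree-Ramsey problem reduces to $\RT{1}{k}$ is imported from \cite[Cor.\ 42]{paulypradicsolda}. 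That invocation of a nontrivial external result is the price for the direction fix, and it is the ingredient your more elementary $\RT{1}{k}$-on-$\mathbb{N}$ plan is missing.
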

\begin{proof}
	We have $\RT{1}{k} \equivW \Fin{k}{\DS} \leqW \Fin{k}{\BS} \leqW \Fin{k}{\SDDCC}$ by \cite[Theorem 4.31]{goh-pauly-valenti} and \thref{thm:FOP(BS)<=SDDCC}.
	It remains to show that $\Fin{k}{\SDDCC} \weireducible \RT{1}{k}$. By \thref{thm:sddcc_cantor}, it is enough to show that $\Fin{k}{\SDDCC(\cantor, \cdot)} \weireducible \RT{1}{k}$.
	
	Let $f$ be a problem with codomain $k$ and assume $f\weireducible\SDDCC(\cantor, \cdot)$ via $\Phi,\Psi$. Observe that every $x\in\dom(f)$ induces a coloring $c\function{\cantor}{k}$ as follows: run $\Psi(x,\sigma)$ in parallel on every $\sigma\in \cantor$. Whenever we see that $\Psi(x,\sigma)$ returns a number less than $k$, we define $c(\tau):=\Psi(x,\sigma)$ for every $\tau \prefix \sigma$ such that $c(\tau)$ is not defined yet.
	By density of the set coded by $\Phi(x)$, $c$ is total. 

	By the Chubb-Hirst-McNicholl tree theorem \cite{CHM09}, there is some $\rho \in 2^{<\mathbb{N}}$ and some color $i<k$ such that $i$ appears densely above $\rho$. We claim that such $i$ would be an $f$-solution to $x$.
	To prove this, fix (by density) some $\rho' \extend \rho$ which lies in the set coded by $\Phi(x)$. Then fix some $\tau \extend \rho'$ with color $i$. Finally, fix some $\sigma \extend \tau$ such that $c(\tau)$ was defined to be $\Psi(x,\sigma)$.
	Now $\sigma$ lies in the set coded by $\Phi(x)$ (by upwards-closure), so $\Psi(x,\sigma)$ is an $f$-solution to $x$. Since $\Psi(x,\sigma) = c(\tau) = i$, we conclude that $i$ is an $f$-solution to $x$. 

    This implies that $f$ reduces to the problem ``given a $k$-coloring of $\cantor$, find $i$ such that, for some $\sigma$, $i$ appears densely above $\sigma$''. We claim that the latter can be solved using $\RT{1}{k}$. This follows from the fact that, as shown in \cite[Corollary 42]{paulypradicsolda}, $\RT{1}{k}$ can solve the problem $(\mathsf{c}(\eta)^1_{<\infty})_k$ defined as ``given a $k$-coloring $\alpha\function{\mathbb{Q}}{k}$, produce $i<k$ such that, for some interval $I$ with rational endpoints, $\alpha^{-1}(i)$ is dense in $I$''.

    We introduce the order $\prec$ on $\cantor$ defined by setting $\sigma\concat0\concat\tau \prec \sigma \prec \sigma\concat1\concat\rho$ for all $\sigma, \tau, \rho \in \cantor$. This is a computable dense linear order with no endpoints, therefore there is a computable order isomorphism $\varphi$ between $(\cantor, \prec)$ and $(\mathbb{Q},<)$. 
    
    Every open interval in $(\mathbb{Q},<)$ contains the $\varphi$-image of an upper cone in $(\cantor, \prec)$ and vice versa. Specifically, the interval $(\varphi(\sigma), \varphi(\tau))$ contains the  image of the cone above $\sigma\concat 1$ if $\sigma\concat 1$ is not a prefix of $\tau$, and the image of the cone above $\tau\concat0$ if $\sigma\concat 1$ is a prefix of $\tau$. Thus, by translating the coloring along $\varphi$ to $\mathbb{Q}$, we find that a color which appears densely in an interval in $\mathbb{Q}$ also appears densely in a cone in $\cantor$. This means we can use $(\mathsf{c}(\eta)^1_{<\infty})_k$ to find a color appearing densely in a cone.
\end{proof}

It is immediate from the previous theorem that the finitary parts of $\BS$ and $\DS$ in the sense of Cipriani and Pauly \cite[Definition 2.10]{paulycipriani1} agree as well.
Finally, we shall prove that the deterministic parts of $\BS$ and $\DS$ agree.

\begin{lemma}
If $\Fin{2}{f} \leqW \RT{1}{2}$, then $\Det{f} \leqW \mflim$.
\end{lemma}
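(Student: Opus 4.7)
The plan is to reduce the theorem, via a chain of inequalities, to a single key fact:
\[ \Det{f} \leqW \widehat{\operatorname{Det}_\mathbf{2}(f)} \leqW \widehat{\operatorname{Det}_\mathbf{2}(\RT{1}{2})} \leqW \widehat{\mflim} \equivW \mflim. \]
The first reduction is the proposition stated just before the theorem on deterministic parts; the second follows from $\operatorname{Det}_\mathbf{2}(f) \leqW \Fin{2}{f} \leqW \RT{1}{2}$ (using the hypothesis together with the monotonicity of $\operatorname{Det}_\mathbf{2}$ and of the parallelization); and the fourth is the parallelizability of $\mflim$. So it remains to establish the crucial third inequality, $\operatorname{Det}_\mathbf{2}(\RT{1}{2}) \leqW \mflim$.

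For this, I would fix a single-valued $g$ with codomain $\mathbf{2}$ such that $g \leqW \RT{1}{2}$ (using the color version of $\RT{1}{2}$) via $\Phi, \Psi$, and let $c := \Phi(x)$ for a given $x \in \dom(g)$. The key observation exploits single-valuedness: either both colors appear infinitely often in $c$, in which case both $\Psi(x,0)$ and $\Psi(x,1)$ halt with output $g(x)$; or $c$ is eventually constant at the unique valid color $i^{*}$, in which case $g(x) = \Psi(x,i^{*})$. Based on this dichotomy, I build a computable sequence $(p_k)_{k \in \mathbb{N}}$ of $\mathbf{2}$-valued guesses, viewed as a sequence in $\Baire$ by the natural embedding $\mathbf{2} \hookrightarrow \Baire$: at stage $k$, simulate both $\Psi$-calls for $k$ steps and inspect $c(k)$, then set $p_k := y_0$ if both calls have halted with the same output $y_0 = y_1$, $p_k := y_{c(k)}$ if both have halted with $y_0 \neq y_1$, $p_k := y$ if only one has halted (with output $y$), and $p_k := 0$ by default otherwise. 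The backward functional then simply reads off $\lim_k p_k$.

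The main obstacle is verifying that $(p_k)$ actually converges to $g(x)$ across all branches of the construction. If both colors are valid, both $\Psi$-calls eventually halt and (by single-valuedness) agree, so $p_k$ stabilizes to $g(x)$. Otherwise let $i^{*}$ be the unique valid color, so $\Psi(x,i^{*})$ halts with output $g(x)$. If $\Psi(x, 1-i^{*})$ diverges, we eventually remain in the one-halted branch and $p_k = g(x)$ cofinitely. If instead $\Psi(x, 1-i^{*})$ also halts with some value, then either its output equals $g(x)$, so $p_k$ stabilizes there, or the outputs disagree and $p_k = y_{c(k)}$; but since only $i^{*}$ is valid, $c$ is eventually constant at $i^{*}$, so cofinitely $p_k = y_{i^{*}} = g(x)$. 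Thus $\lim_k p_k = g(x)$ in every case, giving $g \leqW \mflim$ as required.
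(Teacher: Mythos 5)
Your proof is correct, and it takes a genuinely different route than the paper. Both arguments begin with the same two steps: $\Det{f} \weireducible \widehat{\operatorname{Det}_\mathbf{2}(f)}$ (the proposition preceding the lemma) and $\operatorname{Det}_\mathbf{2}(f) \weireducible \Fin{2}{f} \weireducible \RT{1}{2}$. From there the paper parallelizes all the way up to $\widehat{\RT{1}{2}} \weiequiv \KL$, concludes $\Det{f} \weireducible \Det{\KL}$, and then invokes $\Det{\KL} \weiequiv \mflim$ as a black box (via $\KL \weiequiv \WKL \compproduct \mflim$ and the choice elimination principle). You instead prove the intermediate inequality $\operatorname{Det}_\mathbf{2}(\RT{1}{2}) \weireducible \mflim$ directly, by building an explicit eventually-correct sequence of guesses for the single-valued $\mathbf{2}$-valued output, and only then parallelize, using $\widehat{\mflim} \weiequiv \mflim$. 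Your case analysis (both colors valid, one valid with the other $\Psi$-call diverging, agreeing, or disagreeing) is complete and the convergence argument is sound; the crucial point that $c$ is eventually constant whenever only one color is valid handles the only genuinely dangerous branch. The trade-off is standard: the paper's proof is shorter but leans on heavier machinery, while yours is longer but fully self-contained and elementary, and in effect it re-derives (in this special case) the choice-elimination behaviour from scratch. Either is acceptable; yours has the minor advantage of not requiring the reader to trust $\Det{\KL} \weiequiv \mflim$.
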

\begin{proof}
By algebraic properties of $\Det{\cdot}$, $\Fin{k}{\cdot}$ and the parallelization, we have
\[ \Det{f} \leqW \parallelization{\DetPartX{2}{f}} \leqW \parallelization{\Fin{2}{f}} \leqW \parallelization{\RT{1}{2}}. \]
Since $\Det{f}$ is deterministic, it remains to show that $\Det{\parallelization{\RT{1}{2}}} \leqW \mflim$. This is known, but for completeness we give a proof using results from the survey of Brattka, Gherardi, Pauly \cite{bgp21}:
\begin{align*}
\parallelization{\RT{1}{2}} &\equivsW \parallelization{\Choice{2}'} & \cite[11.8.11, 11.6.10]{bgp21}\phantom{.} \\
&\equivsW \left(\parallelization{\Choice{2}}\right)' & \cite[11.6.12]{bgp21}\phantom{.} \\
&\equivsW \Choice{\Cantor}' & \cite[11.7.23]{bgp21}\phantom{.} \\
&\equivW \Choice{\Cantor} \compproduct \mflim & \cite[11.7.6, 11.6.14]{bgp21}.
\end{align*}
By choice elimination (see \cite[11.7.25]{bgp21} or \cite[Theorem 3.9]{goh-pauly-valenti}), $\Det{\Choice{\Cantor} \compproduct \mflim} \leqW \mflim$. Therefore $\Det{\parallelization{\RT{1}{2}}} \leqW \mflim$.
\end{proof}

Since $\Det{\BS} \geqW \Det{\DS} \equivW \mflim$ \cite[Theorem 4.16]{goh-pauly-valenti}, we conclude that:

\begin{corollary}
	\thlabel{cor:det(bs)}
	$\Det{\BS}\weiequiv \Det{\DS}\weiequiv \mflim$.
\end{corollary}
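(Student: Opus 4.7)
The plan is to string together the results just established: the preceding lemma reduces the problem to controlling $\Fin{2}{\BS}$, and this was pinned down in \thref{theo:finkbs}.

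First I would establish the easy lower bound $\mflim \leqW \Det{\BS}$. Since $\DS \leqW \BS$ (in fact $\DS \leW \BS$ by \thref{theo:bsdssep}), and $\operatorname{Det}_\Baire$ is monotone in the Weihrauch ordering, we have $\Det{\DS} \leqW \Det{\BS}$. Combined with $\Det{\DS} \equivW \mflim$ (cited from \cite[Thm.\ 4.16]{goh-pauly-valenti} in the sentence preceding the corollary), this yields $\mflim \leqW \Det{\BS}$.

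For the matching upper bound $\Det{\BS} \leqW \mflim$, I would instantiate the preceding lemma with $f := \BS$. The lemma's hypothesis $\Fin{2}{\BS} \leqW \RT{1}{2}$ is exactly the $k=2$ case of \thref{theo:finkbs}, where it was shown that $\Fin{k}{\BS} \equivW \RT{1}{k}$. Applying the lemma then immediately gives $\Det{\BS} \leqW \mflim$.

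There is no genuine obstacle left at this point: all of the real work sits in the preceding lemma (which uses the choice elimination principle and $\KL \equivW \WKL \compproduct \mflim$ to replace $\widehat{\RT{1}{2}}$ by $\mflim$ after taking deterministic parts) and in \thref{theo:finkbs} (which uses the Chubb--Hirst--McNicholl tree theorem). The corollary is just the combination of those two facts with the trivial monotonicity argument for the lower bound.
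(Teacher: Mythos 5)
Your proof is correct and follows exactly the same route as the paper: the lower bound comes from monotonicity of $\operatorname{Det}$ together with $\Det{\DS}\equivW\mflim$, and the upper bound comes from applying the preceding lemma with $f=\BS$, its hypothesis being supplied by the $k=2$ case of \thref{theo:finkbs}.
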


\begin{corollary}
	$\DetPartX{\mathbb{N}}{\BS} \weiequiv \CNatural$.
\end{corollary}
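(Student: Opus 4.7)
The plan is to derive the statement essentially for free from \thref{cor:det(bs)}, which has just established $\Det{\BS} \equivW \mflim$. I will split into the two directions of the equivalence.

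For the lower bound $\CNatural \leqW \DetPartX{\BS}{\mathbb{N}}$, I will note that $\CNatural$ is already single-valued with codomain $\mathbb{N}$, and that the chain $\CNatural \leqW \mflim \leqW \DS \leqW \BS$ recorded in Section~\ref{sec:background} puts $\CNatural$ below $\BS$. The maximality clause in the definition of $\DetPartX{\BS}{\mathbb{N}}$ then yields the reduction immediately.

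For the upper bound $\DetPartX{\BS}{\mathbb{N}} \leqW \CNatural$, I will chain \thref{cor:det(bs)} with the standard identification of the codomain-$\mathbb{N}$ deterministic part of $\mflim$. Namely, \thref{cor:det(bs)} gives $\DetPartX{\BS}{\mathbb{N}} \leqW \Det{\BS} \equivW \mflim$; since $\DetPartX{\BS}{\mathbb{N}}$ has codomain $\mathbb{N}$, this upgrades to $\DetPartX{\BS}{\mathbb{N}} \leqW \DetPartX{\mflim}{\mathbb{N}}$, and I then invoke the well-known equivalence $\DetPartX{\mflim}{\mathbb{N}} \equivW \CNatural$ (equivalently, $\fop{\mflim} \equivW \CNatural$, which is already used implicitly in the penultimate corollary of Section~\ref{sec:klbs}).

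The main obstacle is essentially none: all the genuine content has already been absorbed into \thref{cor:det(bs)}, and the only external ingredient is the classical equivalence $\fop{\mflim} \equivW \CNatural$, which is standard in the Weihrauch literature and needs no fresh argument here. In other words, this corollary is a direct repackaging of the preceding one, exploiting only that $\DetPartX{\cdot}{\mathbb{N}}$ is monotone and bounded above by $\Det{\cdot}$.
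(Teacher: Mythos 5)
Your argument is correct and follows essentially the same route as the paper's proof: both directions rest on $\DetPartX{\BS}{\mathbb{N}} \leqW \Det{\BS} \equivW \mflim$ combined with $\fop{\mflim} \equivW \CNatural$ for the upper bound, and on $\CNatural \leqW \mflim \leqW \BS$ for the lower bound (the paper packages the latter as a citation of $\CNatural \equivW \DetPartX{\DS}{\mathbb{N}}$ plus monotonicity, which is the same content). One small imprecision: $\CNatural$ as defined is multi-valued, so strictly it is $\UCNatural \equivW \CNatural$ that is the single-valued codomain-$\mathbb{N}$ representative needed to invoke the maximality clause; this is harmless but worth saying.
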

\begin{proof} Since $\mathbb{N}$ computably embeds in $\Baire$, for every problem $f$ we have $\DetPartX{\mathbb{N}}{f}\weireducible \Det{f}$. In particular, by \thref{cor:det(bs)}, $\DetPartX{\mathbb{N}}{\BS} \weireducible\Det{\BS}  \weiequiv  \mflim$. Since $\fop{\mflim}\weiequiv \CNatural$ (\cite[Proposition 13.10]{BolWei11}, see also \cite[Theorem 7.2]{soldavalenti}), this implies  $\DetPartX{\mathbb{N}}{\BS}\weireducible \CNatural$. The converse reduction follows from the fact that $\CNatural\weiequiv \DetPartX{\mathbb{N}}{\DS}$ \cite[Proposition 4.14]{goh-pauly-valenti}.
\end{proof}

We remark that for establishing $\Fin{k}{\SDDCC} \weireducible \RT{1}{k}$ in \thref{theo:finkbs} it was immaterial that the set of correct solutions was provided as a $\boldfaceSigma^1_1$-set. If we consider any other represented point class $\boldsymbol{\Gamma}$ which is effectively closed under taking preimages under computable functions, and define $\boldsymbol{\Gamma}\mathsf{-DUCC}$ in the obvious way, we can obtain:

\begin{corollary}
$\Fin{k}{\boldsymbol{\Gamma}\mathsf{-DUCC}} \weireducible \RT{1}{k}$.
\end{corollary}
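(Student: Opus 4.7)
The plan is to re-inspect the proof of \thref{theo:finkbs} and verify that the only place where the assumption ``solution sets are $\boldsymbol{\Sigma}^1_1$'' was used can be replaced by the weaker assumption that $\boldsymbol{\Gamma}$ is effectively closed under taking preimages of computable functions. So the corollary follows by an almost verbatim repetition of the earlier argument.

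First, I would generalize \thref{thm:sddcc_cantor} to show that $\boldsymbol{\Gamma}\mathsf{-DUCC} \equivW \boldsymbol{\Gamma}\mathsf{-DUCC}(\cantor, \cdot)$. The nontrivial direction reuses the labelling $\lambda \function{\cantor}{P}$ built in the proof of \thref{thm:sddcc_cantor}: given an instance $((P,\le_P), A)$ with $A \in \boldsymbol{\Gamma}$ dense and upwards-closed in $P$, the map $\lambda$ is total and computable, and $\lambda^{-1}(A)$ is dense and upwards-closed with respect to $\prefix$. The only step that used a specific pointclass was the observation that $\lambda^{-1}(A)$ is again a valid set of solutions, which here reduces to the effective closure of $\boldsymbol{\Gamma}$ under preimages by computable maps. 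Given a $\boldsymbol{\Gamma}\mathsf{-DUCC}(\cantor, \cdot)$-solution $\sigma \in \lambda^{-1}(A)$ we output $\lambda(\sigma) \in A$, as before.

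Second, I would redo the proof of $\Fin{k}{\SDDCC(\cantor, \cdot)} \weireducible \RT{1}{k}$ with $\SDDCC$ replaced by $\boldsymbol{\Gamma}\mathsf{-DUCC}$. Given $f \weireducible \boldsymbol{\Gamma}\mathsf{-DUCC}(\cantor, \cdot)$ with codomain $\mathbf{k}$ via $\Phi, \Psi$, and $x \in \dom(f)$, we define the same $k$-coloring $c \function{\cantor}{k}$: run $\Psi(x, \sigma)$ in parallel on every $\sigma$, and whenever $\Psi(x, \sigma)$ halts with some value, colour every undefined $\tau \prefix \sigma$ with that value. Density of the solution set $\Phi(x)$ ensures $c$ is total, and this uses nothing about how $\Phi(x)$ is coded. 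The Chubb--Hirst--McNicholl tree theorem yields some $\sigma$ and some $i<k$ such that $i$ appears densely above $\sigma$, and the same argument as before (using density and upwards-closure of $\Phi(x)$, but not the pointclass) shows $i \in f(x)$. Finally, passing through the order isomorphism between $\cantor$ and $\mathbb{Q}$ and invoking \cite[Cor.\ 42]{paulypradicsolda} gives the reduction to $\RT{1}{k}$.

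Neither step is an obstacle: the labelling argument used only computability of $\lambda$ plus the closure property of $\boldsymbol{\Gamma}$, and the tree-theorem argument used only density and upwards-closure. Combining the two reductions yields $\Fin{k}{\boldsymbol{\Gamma}\mathsf{-DUCC}} \weireducible \RT{1}{k}$.
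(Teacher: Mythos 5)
Your proposal is correct and is essentially the paper's own justification (the remark immediately preceding the corollary), merely spelled out in detail: the paper asserts that the $\Sigma^1_1$ hypothesis was immaterial, and you verify this by re-tracing the two lemmas and isolating the single place (closure of $\boldsymbol{\Gamma}$ under computable preimages in the $\lambda$ argument) where a pointclass property is actually invoked.
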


This observation could be useful e.g.~for exploring the Weihrauch degree of finding bad arrays in non-better-quasi-orders (cf.~\cite{freund2023logical}).

\begin{proposition} \thlabel{prop:det(fop(BS))}
    $\CNatural \weiequiv \Det{\PiBound} \weiequiv \Det{\fop{\BS}} \weiequiv \Det{\SDDCC}$.
\end{proposition}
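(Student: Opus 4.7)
I would close a cycle of Weihrauch reductions. The forward chain $\CNatural \leqW \Det{\PiBound} \leqW \Det{\fop{\BS}} \leqW \Det{\SDDCC}$ is routine: by monotonicity of $\Det{\cdot}$ applied to $\CNatural \leqW \PiBound \equivW \fop{\DS} \leqW \fop{\BS} \leqW \SDDCC$, where the first reduction comes from $\CNatural \equivW \DetPartX{\DS}{\mathbb{N}}$ \cite[Prop.~4.14]{goh-pauly-valenti} combined with $\fop{\DS} \equivW \PiBound$ \cite[Thm.~4.10]{goh-pauly-valenti}, the middle reduction is monotonicity of $\fop{\cdot}$ along $\DS \leqW \BS$, and the last is \thref{thm:FOP(BS)<=SDDCC}. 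Since $\CNatural$ is single-valued with codomain $\mathbb{N} \hookrightarrow \Baire$, we get $\CNatural \leqW \Det{\PiBound}$.

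For the closing reduction $\Det{\SDDCC} \leqW \CNatural$, I would first use \thref{theo:finkbs} ($\Fin{2}{\SDDCC} \equivW \RT{1}{2}$) together with the lemma preceding \thref{cor:det(bs)} to obtain $\Det{\SDDCC} \leqW \mflim$. This alone yields only $\mflim$ as an upper bound, which is strictly weaker than $\CNatural$, so a sharpening is needed. The idea is to exploit that $\SDDCC$ is first-order (codomain $\mathbb{N}$): any single-valued $g \leqW \SDDCC$ with codomain $\Baire$, witnessed by $(\Phi, \Psi)$, satisfies $g(x) = \Psi(x, n)$ for any $n$ in the $\Sigma^1_1$ dense upwards-closed set $A(x) \subseteq \mathbb{N}$ output by $\Phi(x)$, and single-valuedness of $g$ forces $\Psi(x, \cdot)$ to be constant on $A(x)$. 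Thus $g(x)$ is computable from $x$ together with a single natural number, suggesting the collapse $\Det{\SDDCC} \leqW \DetPartX{\SDDCC}{\mathbb{N}}$. Once this collapse is in place, combining $\DetPartX{\SDDCC}{\mathbb{N}} \leqW \Det{\SDDCC} \leqW \mflim$ with the codomain-$\mathbb{N}$ constraint, the argument of the preceding corollary gives $\DetPartX{\SDDCC}{\mathbb{N}} \leqW \fop{\mflim} \equivW \CNatural$.

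The main obstacle is rigorously establishing $\Det{\SDDCC} \leqW \DetPartX{\SDDCC}{\mathbb{N}}$. The naive candidate $h(x) := \text{``any } n \in A(x)\text{''}$ yields a multi-valued problem, while membership in $\DetPartX{\SDDCC}{\mathbb{N}}$ requires single-valuedness. A workable construction must canonically encode, as a single natural number, enough information to recover $g(x)$, exploiting both the single-valuedness of $g$ and the constancy of $\Psi(x, \cdot)$ on $A(x)$; one natural attempt is to produce a witness whose validity (that $\Psi(x, n) = g(x)$) is forced by comparing $\Psi$-outputs across candidate $n$'s. Handling this step carefully is the crux of the argument.
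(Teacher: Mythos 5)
Your forward chain $\CNatural \leqW \Det{\PiBound} \leqW \Det{\fop{\BS}} \leqW \Det{\SDDCC}$ is correct and essentially matches the paper (minor cosmetic difference in how $\CNatural \leqW \Det{\PiBound}$ is sourced). The backward direction is where the work lies, and your proposal leaves the crucial step genuinely open. You suggest establishing $\Det{\SDDCC} \leqW \DetPartX{\SDDCC}{\mathbb{N}}$, but this collapse is not a priori easier than the proposition itself; indeed, Section~\ref{sec:godel} of the paper is devoted to showing that such interchanges of $\operatorname{Det}$ with first-order-type operators are delicate and can fail. The difficulty you correctly flag — that ``any $n \in A(x)$'' yields a multi-valued problem — is precisely the obstruction, and you do not supply a single-valued replacement. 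Your closing suggestion of ``comparing $\Psi$-outputs across candidate $n$'s'' is pointed in the right direction, but over a general $\Sigma^1_1$ dense upward-closed subset of an arbitrary partial order on $\mathbb{N}$ it is not clear how to turn that comparison into a co-c.e.\ condition that $\CNatural$ can act on.

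The paper's actual argument circumvents your proposed collapse entirely. Using \thref{thm:sddcc_cantor}, one first passes to $\SDDCC(\cantor,\cdot)$, so that the dense upward-closed $\Sigma^1_1$ set $A$ sits inside $\cantor$ with the prefix order. One then defines $X$ to be the set of $\sigma \in \cantor$ such that for all $\tau_0,\tau_1 \extend \sigma$ and all $n$, whenever $\Psi(x,\tau_0)(n)$ and $\Psi(x,\tau_1)(n)$ both halt they agree. This set is $\Pi^{0,x}_1$, and — crucially — $A \subseteq X$: since $A$ is upward-closed and $f$ is single-valued, all $\tau \extend \sigma$ with $\sigma \in A$ satisfy $\Psi(x,\tau) = f(x)$. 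Thus $X$ is a non-empty co-c.e.\ set even though $A$ itself is merely $\Sigma^1_1$, so a single $\CNatural$-query returns some $\sigma \in X$. Density of $A$ then yields $\tau \extend \sigma$ with $\tau \in A$, and the consistency condition defining $X$ lets one read off $f(x)$ from $\Psi$ above $\sigma$. The two ingredients missing from your sketch are exactly these: the reduction to $\cantor$ so that the consistency condition becomes $\Pi^0_1$, and the realization that the correct object to hand to $\CNatural$ is the co-c.e.\ \emph{superset} $X$ of $A$ (not a canonical element of $A$), with density of $A$ guaranteeing that any point of $X$ suffices. Note also that the argument does not pass through $\mflim$ at all — the upper bound $\Det{\SDDCC} \leqW \mflim$ you derive first is a detour that, while true, plays no role once the direct $\CNatural$-instance is in hand.
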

\begin{proof}
	The reductions $\CNatural \weireducible \Det{\PiBound} \weireducible \Det{\fop{\BS}}\weireducible \Det{\SDDCC}$ are straightforward: the first one follows from $\CNatural\weiequiv \UCNatural$ and $\CNatural\weireducible \PiBound$, the second one follows from $\PiBound\weireducible \fop{\BS}$, and the last one follows from \thref{thm:FOP(BS)<=SDDCC}. It remains to show that $\Det{\SDDCC}\weireducible \CNatural$. In light of \thref{thm:sddcc_cantor}, it suffices to show that if $f\pfunction{\Baire}{\Baire}$ is such that $f\weireducible \SDDCC(\cantor, \cdot)$ via $\Phi,\Psi$ then $f\weireducible\CNatural$. Given $x\in\dom(f)$, we can compute the set 
	\begin{align*}
		X:=\{ \sigma \in \cantor \st & (\forall \tau_0,\tau_1\extend \sigma)(\forall n\in\mathbb{N}) \\
		& ((\Psi(x,\tau_0)(n)\downarrow \land \Psi(x,\tau_1)(n)\downarrow)\rightarrow \Psi(x,\tau_0)(n)=\Psi(x,\tau_1)(n)) \}.
	\end{align*}
	Observe that this is a $\Pi^{0,x}_1$ subset of $\cantor$ and it is non-empty: indeed, letting $A\subseteq\cantor$ be the set described by $\Phi(x)$, the fact that $f$ is deterministic implies that $A\subseteq X$. We can therefore use $\CNatural$ to compute some $\sigma \in X$. Observe that, by the density of $A$, there is $\tau \extend \sigma$ such that $\tau \in A$. In particular, $\Psi(x,\sigma)=\Psi(x,\tau)\in f(x)$.
\end{proof}

\thref{cor:det(bs)} and \thref{prop:det(fop(BS))} imply that $\fop{\Det{\BS}} \equivW \fop{\mflim} \equivW \CNatural \equivW \Det{\fop{\BS}}$. In general, $\fop{\Det{f}} \leqW \Det{\fop{f}}$ \cite[Proposition 3.4]{goh-pauly-valenti}. 
Goh, Pauly, Valenti \cite[Question 3.5]{goh-pauly-valenti} asked if there is some problem $f$ such that $\fop{\Det{f}} <_\mathrm{W} \Det{\fop{f}}$. The next section is devoted to an affirmative answer (\thref{thm:fop_Det_below_Det_fop}).

\section{G\"odelization}
\label{sec:godel}

In this section, we show that the first-order part operator and the deterministic part operator do not commute, answering our earlier question \cite[Question 3.5]{goh-pauly-valenti}. For this, we introduce a dual to the first-order part, the G\"odelization operator, defined below. Our starting point is Dzhafarov, Solomon, Yokoyama \cite[Theorem 1.6]{DSY23} which asserts that a problem is computably true (i.e., for every instance $p$ there exists a solution $q$ with $q \leq_\mathrm{T} p$) if and only if it is Weihrauch reducible to some first-order problem. The construction in their proof shows even more:

\begin{theorem} \thlabel{thm:min_first_order_above}
If $g$ is a problem which is computably true, then
\[ \min_{\leqW}\nolimits\{f \geqW g \st f \text{ is first-order}\} \]
exists and is represented by
\[ g^{\mathsf{G}}(p) = \{e \in \mathbb{N} \st \Phi_e(p) \in g(p)\}. \]
\end{theorem}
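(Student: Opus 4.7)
The plan is to verify that $g^{\mathsf{G}}$ has the three required properties: it is well-defined on $\dom(g)$, it is first-order with $g \leqW g^{\mathsf{G}}$, and it Weihrauch-reduces to any first-order problem above $g$.

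Well-definedness of $g^{\mathsf{G}}$ (that $g^{\mathsf{G}}(p) \neq \emptyset$ for every $p \in \dom(g)$) is precisely a restatement of the computable-truth hypothesis: for each $p \in \dom(g)$ there exists a partial computable functional whose index $e$ satisfies $\Phi_e(p) \in g(p)$. Since $g^{\mathsf{G}}$ has codomain $\mathbb{N}$, it is first-order by definition. The reduction $g \leqW g^{\mathsf{G}}$ is then witnessed by the identity forward functional and the backward functional $(p,e) \mapsto \Phi_e(p)$, which outputs a $g$-solution by the very definition of $g^{\mathsf{G}}$.

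The key step is the minimality claim. Let $f$ be first-order with $g \leqW f$. Without loss of generality assume $f$ has codomain $\mathbb{N}$ (else replace $f$ by $\fop{f}$, which is Weihrauch-equivalent), and fix a reduction $g \leqW f$ via a forward functional $\Phi$ and a backward functional $\Psi$. I would construct a reduction $g^{\mathsf{G}} \leqW f$ by keeping $\Phi$ as the forward functional and defining the new backward functional as follows: given an $f$-solution $n \in \mathbb{N}$ to $\Phi(p)$, apply the s-m-n theorem to compute, uniformly in $n$, an index $e_n$ of the partial computable functional $q \mapsto \Psi(q,n)$, and output $e_n$. Correctness is immediate: $\Phi_{e_n}(p) = \Psi(p,n) \in g(p)$, hence $e_n \in g^{\mathsf{G}}(p)$.

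The main conceptual step is recognizing that when $f$ has codomain $\mathbb{N}$, the backward functional $\Psi$ can be packaged into a single program via s-m-n: once the natural number $n$ is fixed, $q \mapsto \Psi(q,n)$ is a partial computable functional whose index is uniformly computable in $n$. No substantive obstacle arises beyond this unpacking of definitions; the theorem essentially formalizes the intuition that a computably-true problem $g$ has a canonical first-order "codification," namely the problem of producing a G\"odel number of a program that solves $g$.
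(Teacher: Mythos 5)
Your proof is correct and takes essentially the same approach as the paper: both observe that $\dom(g^{\mathsf{G}})=\dom(g)$ by computable truth, both give the trivial reduction $g\leqW g^{\mathsf{G}}$, and both prove minimality by using the s-m-n theorem to turn the backward functional of a reduction $g\leqW f$ (with $f$ of codomain $\mathbb{N}$) into a uniformly computed index. The only point the paper adds that you omit is the remark that since $e_n$ depends only on $n$ and not on $p$, one in fact obtains the strong reduction $g^{\mathsf{G}}\leqsW f$.
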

\begin{proof}
Notice $\dom(g^{\mathsf{G}}) = \dom(g)$ since $g$ is computably true. It is clear that $g \leqW g^{\mathsf{G}}$. Next, suppose $h$ has codomain $\mathbb{N}$ and $g \leqW h$ via $\Gamma$ and $\Delta$. To reduce $g^{\mathsf{G}}$ to $h$, observe that if $p \in \dom(g^{\mathsf{G}})$, then $\Gamma(p) \in \dom(h)$, and if $n \in h(\Gamma(p))$, then $\Delta(p,n) \in g(p)$. Using an index for $\Delta$, we can define a functional $\Psi$ such that $\Phi_{\Psi(p,n)}(p) = \Delta(p,n)$ for every $p \in \Baire$ and $n \in \mathbb{N}$. Then if $n \in h(\Gamma(p))$, we have $\Psi(p,n) \in g^{\mathsf{G}}(p)$ as desired. (Indeed $\Psi$ does not even need access to $p$, so $g^{\mathsf{G}} \leqsW h$.)
\end{proof}

Therefore, every problem $g$ which is computably true but not first-order lies in an interval with no first-order problems, bounded below by its first-order part $\fop{g}$ and above by $g^{\mathsf{G}}$. We choose to call $g^{\mathsf{G}}$ the \emph{G\"odelization} of $g$, inspired by the G\"odel numbering problem $\mathsf{G}$ studied by Brattka \cite{brattka_godel}.

\begin{proposition} \thlabel{prop:sufficient_fop_Det_below_Det_fop}
If $g$ is computably true, single-valued, and $g^{\mathsf{G}}$ is not deterministic (i.e., Weihrauch equivalent to a single-valued function with codomain $\Baire$), then $\fop{\Det{g^{\mathsf{G}}}} <_\mathrm{W} \Det{g^{\mathsf{G}}}$.
\end{proposition}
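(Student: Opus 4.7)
My plan is to argue by contradiction. Assume $\Det{g^{\mathsf{G}}} \leqW \fop{\Det{g^{\mathsf{G}}}}$ (the reverse inequality is automatic), which is equivalent to saying that $\Det{g^{\mathsf{G}}}$ is first-order; I will derive from this that $g^{\mathsf{G}}$ is deterministic, contradicting the hypothesis.

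The first step is to verify that $g \leqW \Det{g^{\mathsf{G}}}$. The reduction $g \leqW g^{\mathsf{G}}$ given in the proof of \thref{thm:min_first_order_above} uses identity as the forward functional and $\Psi(p,e) = \Phi_e(p)$ as backward functional; since $g$ is single-valued, $\Phi_e(p)$ equals the unique element $g(p)$ regardless of which $e \in g^{\mathsf{G}}(p)$ is returned. Hence this reduction exhibits $g$ as a single-valued, codomain-$\Baire$ problem Weihrauch below $g^{\mathsf{G}}$, so the maximality in the definition of $\Det{g^{\mathsf{G}}}$ yields $g \leqW \Det{g^{\mathsf{G}}}$.

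The second step invokes the minimality clause of \thref{thm:min_first_order_above}: any first-order problem Weihrauch-above $g$ lies Weihrauch-above $g^{\mathsf{G}}$. Under our standing assumption, $\Det{g^{\mathsf{G}}}$ is such a first-order problem, so $g^{\mathsf{G}} \leqW \Det{g^{\mathsf{G}}}$. Combined with the trivial $\Det{g^{\mathsf{G}}} \leqW g^{\mathsf{G}}$, this yields $g^{\mathsf{G}} \equivW \Det{g^{\mathsf{G}}}$, i.e., $g^{\mathsf{G}}$ is deterministic, contradicting the hypothesis. The only delicate point I anticipate is the first step, namely reading off the single-valuedness of the reduction from the explicit form of $\Psi$ together with the single-valuedness of $g$; after that, everything is a purely formal application of the minimality of $g^{\mathsf{G}}$ among first-order problems Weihrauch-above $g$.
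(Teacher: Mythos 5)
Your proposal is correct and follows essentially the same route as the paper's proof: establish $g \leqW \Det{g^{\mathsf{G}}}$ from the single-valuedness of $g$, then use the minimality of $g^{\mathsf{G}}$ among first-order problems above $g$ (from \thref{thm:min_first_order_above}) together with $\Det{g^{\mathsf{G}}} <_\mathrm{W} g^{\mathsf{G}}$ to rule out $\Det{g^{\mathsf{G}}}$ being first-order. The only difference is cosmetic (contradiction framing versus direct argument), and the elaboration in your first step about the explicit form of the backward functional is unnecessary --- $g$ being single-valued with codomain $\Baire$ and $g \leqW g^{\mathsf{G}}$ already puts $g$ in the class over which $\Det{g^{\mathsf{G}}}$ is a maximum.
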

\begin{proof}
Since $g$ is single-valued and $g \leqW g^{\mathsf{G}}$, we have $g \leqW \Det{g^{\mathsf{G}}} <_\mathrm{W} g^{\mathsf{G}}$. So $\Det{g^{\mathsf{G}}}$ cannot be first-order, by \thref{thm:min_first_order_above}. We conclude that $\fop{\Det{g^{\mathsf{G}}}} <_\mathrm{W} \Det{g^{\mathsf{G}}}$.
\end{proof}

In \thref{prop:comp_true_single_valued_g^1_not_det}, we shall construct such $g$ using the following two ingredients.

\begin{proposition} \thlabel{prop:det_eq_realizer}
If $f$ is deterministic, then it is Weihrauch equivalent to one of its realizers.
\end{proposition}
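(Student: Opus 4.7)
My plan is to construct an explicit realizer $F$ of $f$ by composing a realizer of a single-valued deterministic representative $g$ of $f$ with the witnesses of the reduction $f \leqW g$. Since $f$ is deterministic, $f \equivW \Det{f}$, so there is a single-valued problem $g \pfunction{\mathbf{U}}{\Baire}$ with codomain $\Baire$ satisfying $g \equivW f$ and $g \leqW f$. First I would record that any realizer $G \pfunction{\Baire}{\Baire}$ of such a $g$ is automatically Weihrauch equivalent to $g$ itself: because the codomain is $\Baire$ with identity representation, $G(p) = g(\delta_{\mathbf{U}}(p))$ on every name in the domain, so both directions of the equivalence go through with the identity forward functional and the value of $G$ (respectively $g$) used as the backward functional's output.

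Next I would fix witnesses $\Phi_1, \Psi_1$ for $f \leqW g$, so that for every $p \in \delta_{\mathbf{X}}^{-1}(\dom(f))$, $\Phi_1(p)$ names some $u \in \dom(g)$ and $\Psi_1(p, g(u))$ names an element of $f(\delta_{\mathbf{X}}(p))$. For such $p$, define
\[ F(p) := \Psi_1\bigl(p,\, G(\Phi_1(p))\bigr). \]
This map is a realizer of $f$ by construction, since $G(\Phi_1(p))$ equals $g(u)$ and $\Psi_1(p, g(u))$ is a name for an $f$-solution to $\delta_{\mathbf{X}}(p)$. In particular, $f \leqW F$ is immediate using the identity forward functional.

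For the converse $F \leqW f$, I would use $G \equivW g \leqW f$ to fix witnesses $\Phi_G, \Psi_G$ for $G \leqW f$, and then verify that the composed pair
\[ \Phi_F(p) := \Phi_G(\Phi_1(p)), \qquad \Psi_F(p,q) := \Psi_1\bigl(p,\, \Psi_G(\Phi_1(p), q)\bigr) \]
witnesses $F \leqW f$. Each step is computable, and the reduction uses exactly one oracle call to $f$ (the one inherited from $G \leqW f$). I do not expect a genuine obstacle here; the only substantive work is unpacking determinism to extract the single-valued representative $g$ on $\Baire$, and the rest is bookkeeping to ensure the composition is type-correct and preserves the single-call structure of Weihrauch reducibility.
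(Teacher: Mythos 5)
Your proof is correct and matches the paper's approach: both construct the realizer $F(p) = \Psi_1(p, G(\Phi_1(p)))$ from the witnesses of $f \leqW g$ (the paper's $r(x) = \Psi(x, h(\Phi(x)))$), then check that $F$ is a realizer of $f$ (giving $f \leqW F$) and that $F \leqW f$. The only difference is that you explicitly justify passing from the single-valued $g$ with codomain $\Baire$ (but possibly arbitrary domain) to a realizer $G \pfunction{\Baire}{\Baire}$ equivalent to it, a small step the paper leaves implicit by starting directly with $h \pfunction{\Baire}{\Baire}$.
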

\begin{proof}
Suppose $h : \subseteq \Baire \to \Baire$ satisfies $f \equivW h$. Fix $\Phi$ and $\Psi$ witnessing that $f \leqW h$. We define $r : \subseteq \Baire \to \Baire$ by $r(x) = \Psi(x,h(\Phi(x)))$. By construction, $r$ is a realizer of $f$, thus it in particular holds that $f \leqW r$. Also by construction, $r \leqW h$. Since $h \equivW f$, it follows that $r \equivW f$.
\end{proof}

The second ingredient is the following Rice-like result, which forms the recursion-theoretic core of our construction. For each $e \in \mathbb{N}$, let $\phi_e$ denote the $e$-th partial computable function on $\mathbb{N}$. Let $\mathrm{Tot}$ denote the set of $e \in \mathbb{N}$ such that $\phi_e$ is total.

\begin{lemma} \thlabel{lem:not_separable}
There is no computable $F: \subseteq \mathbb{N} \to \mathbb{N}$ such that whenever $e,e' \in \mathrm{Tot}$, then $F(e) = F(e')$ if and only if $\phi_e = \phi_{e'}$. In fact, fix $e_0 \in \mathrm{Tot}$ and define $A := \{e \in \mathbb{N} \st \phi_e = \phi_{e_0}\}$. Then no c.e.\ set containing $A$ is disjoint from $\mathrm{Tot}\setminus A$.
\end{lemma}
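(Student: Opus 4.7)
The plan is to first reduce the initial assertion to the stronger ``In fact'' claim, and then to prove the stronger claim via the recursion theorem. For the reduction, I would argue: given a hypothetical $F$ witnessing failure of the first statement, the stipulation that $F(e) = F(e')$ makes sense for all $e,e' \in \mathrm{Tot}$ forces $\mathrm{Tot} \subseteq \dom(F)$, so the set $B := \{e \in \dom(F) \st F(e) = F(e_0)\}$ is r.e. By hypothesis on $F$, every $e \in A$ has $\phi_e = \phi_{e_0}$ (both total), so $F(e) = F(e_0)$ and $A \subseteq B$; meanwhile every $e \in \mathrm{Tot} \setminus A$ has $\phi_e \neq \phi_{e_0}$, hence $F(e) \neq F(e_0)$, so $B \cap (\mathrm{Tot} \setminus A) = \emptyset$, contradicting the stronger claim.

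To prove the stronger claim, suppose towards a contradiction that $W$ is an r.e.\ set with $A \subseteq W$ and $W \cap (\mathrm{Tot} \setminus A) = \emptyset$; since $A \subseteq \mathrm{Tot}$, this is equivalent to $W \cap \mathrm{Tot} \subseteq A$. By the recursion theorem I would construct an index $e^*$ whose computation proceeds as follows: on input $n$, run $\phi_{e_0}(n)$ and, in parallel, enumerate $W$; once $\phi_{e_0}(n)$ halts with value $v$, check whether $e^*$ has already been enumerated into $W$ by that stage. If not, output $v$; if so, output $v+1$.

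I would then analyze the two possibilities. If $e^* \notin W$, then the check always fails, so $\phi_{e^*}(n) = \phi_{e_0}(n)$ for every $n$, whence $\phi_{e^*} = \phi_{e_0}$ and $e^* \in A \subseteq W$, a contradiction. If $e^* \in W$, let $s^*$ be the stage of enumeration; only finitely many computations $\phi_{e_0}(n)$ can halt within $s^*$ stages, so for cofinitely many $n$ we obtain $\phi_{e^*}(n) = \phi_{e_0}(n) + 1 \neq \phi_{e_0}(n)$. Since $e_0 \in \mathrm{Tot}$, the function $\phi_{e^*}$ is total, and it differs from $\phi_{e_0}$, so $e^* \in \mathrm{Tot} \setminus A$, contradicting $W \cap \mathrm{Tot} \subseteq A$.

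The main subtlety I expect is to set up the interleaving so that the output of $\phi_{e^*}(n)$ responds correctly to the $\Sigma^0_1$ event ``$e^* \in W$''; the totality of $\phi_{e_0}$ is essential, since it guarantees the two ``timers'' (the halting time of $\phi_{e_0}(n)$ and the enumeration time of $e^*$ into $W$) can actually be compared in finite time. Once the construction is in place, both branches of the dichotomy produce an $e^*$ violating one of the two containments defining $W$, so no such $W$ exists.
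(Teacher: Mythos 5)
Your proof is correct, and it takes a genuinely different route from the paper's. The paper reduces the halting problem to deciding membership in a hypothetical r.e.\ separator $U$: for each $e$, it builds an index $f(e)$ whose function copies $\phi_{e_0}$ until (and unless) $\phi_e(e)$ halts, and thereafter produces a different total function. Then $f(e)\in A$ exactly when $\phi_e(e)$ diverges, and $f(e)\in\mathrm{Tot}\setminus A$ exactly when it converges, so one can decide halting by waiting for $\phi_e(e)$ to halt or for $f(e)$ to appear in $U$ --- contradicting the undecidability of $K$. You instead apply the recursion theorem to produce a single self-referential index $e^*$ that monitors the $\Sigma^0_1$ event ``$e^*$ enters $W$,'' and derive a contradiction directly from the dichotomy $e^*\in W$ versus $e^*\notin W$, with no appeal to $K$. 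The underlying construction is the same (copy $\phi_{e_0}$ until a $\Sigma^0_1$ event fires, then perturb the outputs to leave $A$ while staying total), but the wrapper differs: the paper's is a many-one reduction argument, yours is a direct fixed-point diagonalization. Both are standard and both are correct. Two minor notes on yours: (i) the ``cofinitely many'' step relies on the usual step-counting convention (at step $s$ only inputs below $s$ can have converged), which is harmless but worth being explicit about since the argument collapses without it --- alternatively one could pad the simulation so the check stage is always at least $n$; and (ii) your explicit derivation of the first assertion from the ``In fact'' claim (via $B=\{e\in\dom(F): F(e)=F(e_0)\}$) is a reasonable and correct addition that the paper leaves implicit.
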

\begin{proof}
Suppose towards a contradiction that $U \supseteq A$ is c.e.\ and disjoint from $\mathrm{Tot}\setminus A$. Then we can compute halting as follows. Given $e$, compute an index $f(e)$ such that $\phi_{f(e)}$ copies $\phi_{e_0}$ until $\phi_e(e)$ halts (if ever), after which $\phi_{f(e)}$ flips each output of $\phi_{e_0}$. Notice if $\phi_e(e)$ halts, then $f(e) \in \mathrm{Tot}\setminus A$, else $f(e) \in A \subseteq U$. So we can compute whether $\phi_e(e)$ halts by waiting for it to halt, or for $f(e)$ to appear in $U$.
\end{proof}

\begin{proposition} \thlabel{prop:comp_true_single_valued_g^1_not_det}
There is a problem $g\function{\mathbb{N}}{\Baire}$ which is computably true and single-valued, such that $g^{\mathsf{G}}$ is not deterministic. (In particular, $g$ cannot be first-order, else $g^{\mathsf{G}} \equivW g$ is deterministic.)
\end{proposition}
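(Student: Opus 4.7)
The plan is to construct $g\function{\mathbb{N}}{\Baire}$ that is single-valued and computably true, so that the G\"odelization $g^{\mathsf{G}}$ is not deterministic. By \thref{prop:det_eq_realizer}, the latter amounts to ensuring that for no realizer $r$ of $g^{\mathsf{G}}$ does $r \leqW g^{\mathsf{G}}$ hold; equivalently, no computable pair $(\Phi,\Psi)$ may satisfy $\Psi(n, e) = r(n) \in g^{\mathsf{G}}(n)$ for every input $n$ and every $e \in g^{\mathsf{G}}(\Phi(n))$. I therefore treat each candidate pair $(\Phi_i, \Psi_i)$ in a fixed enumeration as a diagonalization requirement $R_i$, to be met at a reserved input $n_i$.

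First, I would fix the standard computable section $s\function{\mathbb{N}}{\mathbb{N}}$ such that $\Phi_{s(e')}$ is the constant functional outputting $\phi_{e'}$, so that $s(e') \in g^{\mathsf{G}}(m_i)$ (with $m_i := \Phi_i(n_i)$) precisely when $\phi_{e'} = g(m_i)$. I plan to set $g(m_i) = g(n_i) = \phi_{e^{(i)}}$ for an index $e^{(i)} \in \mathrm{Tot}$ to be chosen via Kleene's recursion theorem; this makes $s(e^{(i)}) \in g^{\mathsf{G}}(m_i)$. Under this setup, assuming $r \leqW g^{\mathsf{G}}$ via $(\Phi_i, \Psi_i)$, the constancy of $\Psi_i(n_i, \cdot)$ on $g^{\mathsf{G}}(m_i)$ forces $r(n_i) = h_i(e^{(i)})$, where $h_i(e) := \Psi_i(n_i, s(e))$ is computable, while validity requires $\Phi_{h_i(e^{(i)})}(n_i) = \phi_{e^{(i)}}$. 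Choosing $e^{(i)}$ by the recursion theorem so that $\phi_{e^{(i)}} \ne \Phi_{h_i(e^{(i)})}(n_i)$ gives the contradiction defeating $R_i$. The role of \thref{lem:not_separable} is to provide the extra slack: the constancy of $\Psi_i(n_i,\cdot)$ would make the r.e.\ set $\{e' \st \Psi_i(n_i, s(e')) = r(n_i)\}$ contain the whole $\phi$-equivalence class of $\phi_{e^{(i)}}$, which by the lemma must then intersect $\mathrm{Tot}$ outside that class; this flexibility is what lets the Kleene fixed point be arranged despite cross-stage interactions. Each $g(n_i) = \phi_{e^{(i)}}$ is computable, so $g$ is computably true, and single-valuedness holds by construction.

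The main obstacle is the priority coordination among requirements: the value $m_i = \Phi_i(n_i)$ is out of our control and may coincide with inputs reserved by, or $g$-values already assigned at, earlier stages. A standard finite-injury style bookkeeping---assigning sufficiently sparse $n_i$, and when $m_i$ collides with a previously committed input, re-routing the argument through the ``same input'' variant $m_i = n_i$ (where the recursion-theoretic fixed point alone suffices) or exploiting the inseparability of \thref{lem:not_separable} to locate a non-conflicting representative of the relevant equivalence class---should make the construction go through and yield a $g$ defeating every computable reduction simultaneously.
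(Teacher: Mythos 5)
Your overall strategy matches the paper's: diagonalize against all candidate pairs $(\Phi_i,\Psi_i)$ so that no realizer of $g^{\mathsf{G}}$ is Weihrauch reducible to $g^{\mathsf{G}}$, using \thref{prop:det_eq_realizer} as the reduction to that task. The core novelty you propose is replacing the paper's three-way case split (Cases 1, 2a, 2b, plus the contradiction via \thref{lem:not_separable}) with a direct application of Kleene's recursion theorem. That part is sound in the clean case $m_i = n_i$: one can define a computable $f$ with $\phi_{f(e)}$ always total and, whenever $\Phi_{h_i(e)}(n_i)$ is total, $\phi_{f(e)}\ne\Phi_{h_i(e)}(n_i)$ (by dovetailed simulation and bumping values); a fixed point then defeats the reduction, with non-constancy and divergence of $\Psi_i$ already entailing failure of the reduction. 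This is a genuinely different and arguably more direct route than the paper's indirect contradiction via the Rice-style lemma.

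The genuine gap is in the coordination of requirements, which you flag but do not resolve. Pre-reserving $n_i$ and setting $g(m_i)=g(n_i)=\phi_{e^{(i)}}$ creates exactly the collision problem you mention, and your two proposed fixes do not work as stated. ``Re-routing through the same-input variant $m_i=n_i$'' is not available when $m_i\neq n_i$ is a previously committed input; and ``locating a non-conflicting representative via \thref{lem:not_separable}'' does not describe a step that helps, since the issue is that the value of $g$ at $m_i$ is already frozen, not that you cannot find an index. In fact, the correct resolution needs neither the recursion theorem nor \thref{lem:not_separable} when $m_i\neq n_i$: if $g(m_i)$ is already defined (or you define it to be any fixed computable real), take $e$ with $\Phi_e(m_i)=g(m_i)$ and set $g(n_i)$ to be a computable real differing from $\Phi_{\Psi_i(n_i,e)}(n_i)$ -- this is the paper's Case~1. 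The cleaner organization is then the paper's: at each stage, choose $n$ among inputs where $g$ is still undefined (handling at most two per stage), using the diagonalization above when $\Gamma(n)\neq n$ and your recursion-theorem argument (or the paper's Case~2) when $\Gamma(n)=n$ for all available $n$. Relatedly, your invocation of \thref{lem:not_separable} as ``extra slack'' is not a real step of the argument: if you use the recursion theorem, that lemma is simply not needed (it is what the paper uses \emph{instead of} the recursion theorem to obtain the $\Gamma(n)=n$ contradiction). As written, the proposal correctly identifies the shape of the proof but does not supply a working mechanism for the bookkeeping, which is where the actual work lies.
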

\begin{proof}
We shall construct a sequence of computable reals $(g_n)_{n \in \mathbb{N}}$ such that for every pair of Turing functionals $\Gamma$ and $\Delta$ (which form a potential Weihrauch reduction from some realizer of $g^{\mathsf{G}}$ to $g^{\mathsf{G}}$ itself), one of the following holds:
\begin{enumerate}
	\item some $\Gamma(n)$ is not a name for a natural number (i.e., an instance of $g$ as defined below)
	\item \label{cond:splitting} there are $n$, $e$, $e' \in \mathbb{N}$ such that $\Phi_e(\Gamma(n)) = g_{\Gamma(n)} = \Phi_{e'}(\Gamma(n))$ but $\Delta(n,e) \neq \Delta(n,e')$
	\item \label{cond:wrong} there are $n$, $e \in \mathbb{N}$ such that $\Phi_e(\Gamma(n)) = g_{\Gamma(n)}$ but $\Phi_{\Delta(n,e)}(n) \neq g_n$.
\end{enumerate}

If we then define $g\function{\mathbb{N}}{\Baire}$ by $g(n) := g_n$, we see that $g$ is single-valued, computably true, and $g^{\mathsf{G}}$ is not Weihrauch equivalent to any of its realizers (hence not deterministic, by \thref{prop:det_eq_realizer}).

We shall construct $(g_n)_{n \in \mathbb{N}}$ in stages. At each stage, we handle a single pair $\Gamma$ and $\Delta$ by defining at most two new computable reals $g_n$. We assume that for every $n \in \mathbb{N}$, $\Gamma(n)$ names a natural number (otherwise no action is needed). Consider the following cases.

\underline{Case 1.} For some $n \in \mathbb{N}$ such that $\Gamma(n) \neq n$, $g_n$ has not been defined. By defining $g_{\Gamma(n)}$ if necessary, we may fix some $e$ such that $\Phi_e(\Gamma(n)) = g_{\Gamma(n)}$. Since $\Gamma(n) \neq n$, we may then define $g_n$ to be a computable real which differs from $\Phi_{\Delta(n,e)}(n)$ (regardless of whether the latter is total), ensuring (\ref{cond:wrong}).

\underline{Case 2.} Fix some $n \in \mathbb{N}$ such that $g_n$ is not defined. Since Case 1 fails, $\Gamma(n) = n$.

\underline{Case 2a.} There are $e,e' \in \mathbb{N}$ such that $\Phi_e(\Gamma(n)) = \Phi_{e'}(\Gamma(n))$ and $\Delta(n,e) \neq \Delta(n,e')$ (this includes the case where either or both sides diverge). We define $g_{\Gamma(n)} = \Phi_e(\Gamma(n))$, ensuring (\ref{cond:splitting}).

\underline{Case 2b.} If there is some $e \in \mathbb{N}$ such that $\Phi_e(\Gamma(n))$ is total and not equal to $\Phi_{\Delta(n,e)}(n)$, we define $g_n = \Phi_e(\Gamma(n))$. This ensures (\ref{cond:wrong}).

To complete the current stage of the construction, we shall show that the above cases encompass all possibilities. Suppose otherwise. Since Case 2b fails, for all $e \in \mathbb{N}$, whenever $\Phi_e(\Gamma(n))$ is total, so is $\Phi_{\Delta(n,e)}(n)$ and they must be equal. Since Case 2a fails, whenever $\Phi_e(\Gamma(n)) = \Phi_{e'}(\Gamma(n))$, we have $\Delta(n,e) = \Delta(n,e')$. We deduce that whenever $\Phi_e(\Gamma(n))$ and $\Phi_{e'}(\Gamma(n))$ are total, $\Delta(n,e) = \Delta(n,e')$ if and only if $\Phi_e(\Gamma(n)) = \Phi_{e'}(\Gamma(n))$. This contradicts \thref{lem:not_separable}: Define $F: \subseteq \mathbb{N} \to \mathbb{N}$ by $F(d) = \Delta(n,e)$, where $\Phi_e$ upon input $\Gamma(n)$ merely simulates $\phi_d$.

This completes the current stage of the construction. Observe we have ensured that one of (1)--(3) hold for $\Gamma$ and $\Delta$.
\end{proof}

We may now answer \cite[Question 3.5]{goh-pauly-valenti} in the affirmative.

\begin{theorem} \thlabel{thm:fop_Det_below_Det_fop}
There is a problem $f$ such that $\fop{\Det{f}} <_\mathrm{W} \Det{\fop{f}}$.
\end{theorem}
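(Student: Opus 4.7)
The plan is to take $f := g^{\mathsf{G}}$, where $g$ is the problem supplied by \thref{prop:comp_true_single_valued_g^1_not_det}. Since $g^{\mathsf{G}}$ has codomain $\mathbb{N}$ by the explicit formula in \thref{thm:min_first_order_above}, it is itself first-order, so $\fop{f} \equivW f$ and consequently $\Det{\fop{f}} \equivW \Det{f}$. On the other hand, applying \thref{prop:sufficient_fop_Det_below_Det_fop} to $g$ yields $\fop{\Det{g^{\mathsf{G}}}} \leW \Det{g^{\mathsf{G}}}$, that is, $\fop{\Det{f}} \leW \Det{f}$. Chaining these two observations gives $\fop{\Det{f}} \leW \Det{\fop{f}}$, which is the desired separation.

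All of the difficult work has already been carried out in the preceding propositions: the construction of a computably true, single-valued problem $g$ whose G\"odelization fails to be deterministic is the content of \thref{prop:comp_true_single_valued_g^1_not_det}, and the Rice-style \thref{lem:not_separable} is the recursion-theoretic engine behind it. The step above is therefore pure bookkeeping; the only subtlety is noticing that $g^{\mathsf{G}}$, having codomain $\mathbb{N}$, collapses the outer $\fop$ on the right-hand side of the target inequality, so that the strict gap produced by \thref{prop:sufficient_fop_Det_below_Det_fop} — which a priori compares $\fop{\Det{g^{\mathsf{G}}}}$ with $\Det{g^{\mathsf{G}}}$ rather than with $\Det{\fop{g^{\mathsf{G}}}}$ — immediately transfers to the form required by the statement. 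I do not foresee any further obstacle.
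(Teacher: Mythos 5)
Your proof is correct and is exactly the paper's argument: take $f = g^{\mathsf{G}}$ with $g$ from \thref{prop:comp_true_single_valued_g^1_not_det}, observe that $f$ is first-order so $\Det{\fop{f}} \equivW \Det{f}$, and then apply \thref{prop:sufficient_fop_Det_below_Det_fop}. Nothing further is needed.
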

\begin{proof}
By \thref{prop:comp_true_single_valued_g^1_not_det}, fix a problem $g$ which is computably true and single-valued, such that $g^{\mathsf{G}}$ is not deterministic. By \thref{prop:sufficient_fop_Det_below_Det_fop}, $\fop{\Det{g^{\mathsf{G}}}} <_\mathrm{W} \Det{g^{\mathsf{G}}}$. Let $f = g^{\mathsf{G}}$. Then $\fop{\Det{f}} \equivW \fop{\Det{g^{\mathsf{G}}}} <_\mathrm{W} \Det{g^{\mathsf{G}}} \equivW \Det{\fop{f}}$, where the last equivalence holds because $f$ is first-order and $\Det{\cdot}$ is degree-theoretic.
\end{proof}

\section{$\pitacc{k}$ and $\pitaccN$}
\label{sec:pitaccn}
A crucial role in the separation between $\BS$ and $\DS$ in Section \ref{sec:separation} is played by partial orders that admit a tree decomposition. It is therefore natural to ask what is the strength of the restriction $\BStree$ of $\BS$ to trees. In order to present our results about the first-order and finitary parts of $\BStree$ we first prove some results about $\boldfacePi^0_2$ \emph{all or co-unique choice}, $\pitaccN$. This problem has also been studied in \cite{paulysolda} in the context of continuous Weihrauch reducibility.

\begin{definition}
The problem $\pitacc{k}$ receives as input a $\boldfacePi^0_2$-subset $A \subseteq \mathbf{k}$ with $|A| \geq k-1$ and returns some $i \in A$. The problem $\pitaccN$ receives as input a $\boldfacePi^0_2$-subset $A \subseteq \mathbb{N}$ with $|\mathbb{N} \setminus A| \leq 1$ and returns some $i \in A$.
\end{definition}

The relevance of these problems to $\BStree$ will be explained in Section \ref{sec:antichain}.

Rather than having to reason directly with $\boldfacePi^0_2$-subsets, we can use the following more concrete characterization of the problem.

\begin{proposition}
	\thlabel{thm:pi02acck}
Let $\mathbf{X} \in \{k, \mathbb{N}\}$ for $k \geq 2$. Then $\pitacc{\mathbf{X}}$ is equivalent to the problem ``Given some $p \in \mathbf{X}^\mathbb{N}$, output some $n \in\mathbf{X}$ such that $n \neq \lim_{i \to \infty} p_i$ (which we understand to be true if the limit does not exist)''.
\end{proposition}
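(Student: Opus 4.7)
The plan is to establish the equivalence via two Weihrauch reductions.

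For the direction from the limit problem to $\pitacc{\mathbf{X}}$: given $p \in \mathbf{X}^\mathbb{N}$, define $A_p := \{n \in \mathbf{X} \st \forall N \exists i > N\,(p_i \neq n)\}$. This set is $\boldfacePi^0_2$ uniformly in $p$, and its complement is either empty (if $p$ is not eventually constant in $\mathbf{X}$) or the singleton $\{\lim_i p_i\}$, so $A_p$ is a valid input for $\pitacc{\mathbf{X}}$; any solution $n \in A_p$ automatically satisfies $n \neq \lim_i p_i$.

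For the reverse reduction, we use the standard representation: from a $\boldfacePi^0_2$-code for $A \subseteq \mathbf{X}$, writing $A = \bigcap_k U_k$ for c.e.\ open $U_k$, we uniformly compute $K\colon \mathbf{X} \times \mathbb{N} \to \mathbb{N}$, non-decreasing in its second argument, such that $n \in A$ iff $\lim_s K(n,s) = \infty$; namely, let $K(n,s)$ be the largest $k$ such that by stage $s$ we have confirmed $n \in U_j$ for all $j \leq k$. For $\mathbf{X} = k$ finite, set $p_s := \min\arg\min_{n<k} K(n,s)$. If $\mathbf{X} \setminus A = \{n_0\}$, then $K(n_0,\cdot)$ stabilizes at some finite $B$ while $K(n,s) \to \infty$ for every $n \neq n_0$ (all of which lie in $A$); since the pool has only $k$ elements, eventually $p_s = n_0$. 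If $A = \mathbf{X}$, any sequence is a correct output.

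The main obstacle is the case $\mathbf{X} = \mathbb{N}$: at any stage $s$ there are indices $m \leq s$ that have only been processed superficially, so $K(m,s)$ may be artificially small (not because $m \notin A$ but because the enumeration has not yet reached $m$), and the naive argmin over $m \leq s$ need not converge to $n_0$. The fix is to weigh indices by their size: define $K'(m,s) := K(m,s) + m$ and set $p_s := \min\arg\min_{m \leq s} K'(m,s)$. If $\mathbb{N}\setminus A = \{n_0\}$, then $K'(n_0,s) \to B + n_0$, while $K'(m,s) \to \infty$ for each fixed $m \in A$ with $m \neq n_0$, and crucially $K'(m,s) \geq m$ uniformly in $s$, so $K'(m,s) > B + n_0$ for all $m > B + n_0$ regardless of $s$. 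The competition for the argmin thus reduces to a finite initial segment, forcing $p_s = n_0$ eventually. When $A = \mathbb{N}$, the sequence $p$ may fail to converge, but any solution to the limit problem on such $p$ lies in $A$ vacuously, so the reduction is correct in all cases.
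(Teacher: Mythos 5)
Your proof is correct. The reduction of the limit problem to $\pitacc{\mathbf{X}}$ is identical to the paper's. For the converse, your bookkeeping is a dual variant of the paper's: the paper produces, for each $n$, a binary sequence $q(n,\cdot)$ with infinitely many $1$s iff $n \in A$, counts the zeroes $M_{n,i}$ observed so far, and outputs the index with the largest count; you instead track the confirmed $\Pi^0_2$-level $K(n,s)$ directly and output the index with the smallest value. These are two presentations of the same approximation idea. The genuine divergence is the treatment of $\mathbf{X} = \mathbb{N}$: the paper at stage $i$ restricts to indices $n \le i$ and lets each be inspected only up to a pairing-dependent depth $j(n)$, so that the competition is dovetailed over a slowly growing finite window; you instead add the additive penalty $m$ to $K(m,s)$, so that $K'(m,s) \ge m$ uniformly confines the minimizer to a fixed finite initial segment once $K(n_0,\cdot)$ has stabilized at its final value $B$. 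The penalty trick is a clean, self-contained alternative to the pairing-based dovetailing and arguably makes the convergence argument more transparent; both approaches are valid.
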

\begin{proof}
Given some $p \in \mathbf{X}^\mathbb{N}$, we observe that $\{n \in \mathbf{X} \st n \neq \lim_{i \to \infty} p_i\}$ is a $\Pi^{0,p}_2$-set omitting at most one element. That shows that $\pitacc{\mathbf{X}}$ can solve the problem of finding a non-limit point.

Conversely, let $A$ be a $\boldfacePi^0_2$-subset of $\mathbf{X}$ with $|\mathbf{X}\setminus A|\le 1$. We can $A$-compute $q\function{\mathbf{X}\times \mathbb{N}}{2}$ such that, for each $n\in\mathbf{X}$, $q(n,\cdot)$ contains infinitely many $1$s iff $n \in A$. We define a sequence $p \in \mathbf{X}^\mathbb{N}$ as follows: for each $n\in \mathbf{X}$ and $k\in\mathbb{N}$, let $M_{n,k}:= |\{ j \le k \st q(n,j)=0 \}|$. 

For the sake of readability, we distinguish the cases where $\mathbf{X}$ is finite or not. If $\mathbf{X}=k$, we define  
\[ p_i:= \min \{ n < k \st (\forall m < k)(M_{n,i}\ge M_{m,i})\} . \]
If $\mathbf{X}=\mathbb{N}$, at each stage $i$, for each $n\le i$ we only see $j(n)$-many elements of the sequence $\sequence{q(n,j)}{j\in\mathbb{N}}$, where $j(n)$ is least such that $\pairing{n,j(n)}>i$. In this case, we define 
\[ p_i:= \min \{ n \le i \st (\forall m\le i)(M_{n,j(n)}\ge M_{m,j(m)})\} . \]

Observe that, in both cases, if $A = \mathbf{X} \setminus \{m\}$ for some $m$, then $\lim_{i\to\infty} p_i = m$, hence any $n\neq \lim_{i\to\infty} p_i$ is a valid solution for $\pitacc{\mathbf{X}}(A)$.
\end{proof}

Observe that $\pitacc{k}$ can be equivalently thought of as the restriction of $\RT{1}{k}$ to colourings where at most one color does not appear infinitely often. In particular, $\pitacc{2} \weiequiv \RT{1}{2}$.

While in some sense $\pitacc{\mathbb{N}}$ is a very weak principle -- after all, there is just one single incorrect answer amongst all of the natural numbers -- it is at the same time not particularly easy to solve, as evidenced by the following result.

\begin{proposition}
\thlabel{prop:pitacc_not_below_lim}
$\pitacc{\mathbb{N}} \nleqW \mflim$.
\end{proposition}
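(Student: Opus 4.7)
The plan is to obtain a contradiction by diagonalizing via Kleene's recursion theorem. Assume towards a contradiction that $\pitacc{\mathbb{N}} \leqW \mflim$ via computable functionals $\Phi, \Psi$. Using \thref{thm:pi02acck}, this yields a total function $r\function{\Baire}{\mathbb{N}}$ defined by $r(p) := \Psi(p, \mflim(\Phi(p)))$, with the property that $r(p) \neq \lim p$ whenever the limit of $p$ exists. Moreover $r$ is limit-computable: there is a total computable $R\function{\Baire}{\Baire}$ with $r(p) = \lim_n R(p)(n)$ for every $p \in \Baire$.

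The first step is to repackage $r$ as a total computable function $\tilde r\function{\baire}{\mathbb{N}}$ of finite strings satisfying $\lim_n \tilde r(p \upharpoonright n) = r(p)$ for every $p \in \Baire$. The idea is to slow $R$ down so that its $n$-th output term is computed by querying only the first $n$ input values, then set $\tilde r(\sigma) := R_{\mathrm{slow}}(\sigma \concat 0^\omega)(|\sigma|)$. A routine bookkeeping argument using the use functions of $R$ verifies that $\tilde r$ is total computable and that $\tilde r(p \upharpoonright n) \to r(p)$ for every $p$. I expect this to be the main technical obstacle, since the slowdown has to be engineered so that the approximations along the diagonal $n \mapsto \tilde r(p\upharpoonright n)$ still converge to the true value $r(p)$, and not just along any cofinal subsequence.

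The second step is to apply the recursion theorem to build a total computable $p^*\in\Baire$ satisfying the fixed-point relation $p^*(n) = \tilde r(p^* \upharpoonright n)$ for every $n$. Concretely, one defines a computable $F\function{\mathbb{N}}{\mathbb{N}}$ such that $\phi_{F(e)}(n)$ first simulates $\phi_e$ on inputs $0,\ldots,n-1$ and then applies $\tilde r$ to the resulting tuple; totality of $\tilde r$ ensures $\phi_{F(e)}(n)$ halts whenever $\phi_e$ halts on $\{0,\ldots,n-1\}$. Taking a recursion-theoretic fixed point $\phi_{e^*} = \phi_{F(e^*)}$ and inducting on $n$ shows $p^* := \phi_{e^*}$ is total. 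Combining with the first step, $\lim_n p^*(n) = \lim_n \tilde r(p^* \upharpoonright n) = r(p^*)$, so $\lim p^*$ exists and equals $r(p^*)$, contradicting $r(p^*) \neq \lim p^*$.
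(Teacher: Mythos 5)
Your proof is correct, and it takes a genuinely different route from the paper's. The paper argues abstractly: since $\pitacc{\mathbb{N}}$ is first-order, $\pitacc{\mathbb{N}} \leqW \mflim$ would force $\pitacc{\mathbb{N}} \leqW \fop{\mflim} \equivW \CNatural$; then, because the characterization in \thref{thm:pi02acck} exhibits $\pitacc{\mathbb{N}}$ as a closed fractal, the absorption theorem for closed fractals would force $\pitacc{\mathbb{N}}$ to be computable, which it is not. You instead give a direct fixed-point diagonalization: any reduction produces a limit-computable ``wrong-guess'' function $r$ with $r(p) \neq \lim p$ whenever the limit exists; you repackage $r$ as a total computable $\tilde r$ on finite strings with $\tilde r(p \upharpoonright n) \to r(p)$, then construct a computable $p^*$ with $p^*(n) = \tilde r(p^* \upharpoonright n)$, so that $p^*$ converges to $r(p^*)$, a contradiction. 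The paper's route is shorter given the machinery and is more conceptual, identifying the structural reason (closed-fractal absorption) why $\pitacc{\mathbb{N}}$ cannot sit below $\CNatural$; yours is more elementary and self-contained, needing nothing beyond basic computability theory. Two small remarks on yours: the repackaging step you flag as the main obstacle does go through --- define $\tilde r(\sigma)$ to be $R^\sigma(m)$ for the largest $m$ with $R^\sigma(0), \dots, R^\sigma(m)$ all halting within $|\sigma|$ steps (default $0$ if none); this $m$ is nondecreasing along prefixes of $p$ and tends to infinity, so $\tilde r(p \upharpoonright n) \to \lim_m R(p)(m) = r(p)$. And the recursion theorem is overkill: since $p^*(n)$ depends only on $p^*(0), \dots, p^*(n-1)$ and $\tilde r$ is total computable, $p^*$ is defined by ordinary primitive recursion.
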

\begin{proof}
	As $\pitacc{\mathbb{N}}$ is a first-order problem, $\pitacc{\mathbb{N}} \leqW \mflim$ would already imply $\pitacc{\mathbb{N}} \leqW \C_\mathbb{N}$. The characterization in \thref{thm:pi02acck} shows that $\pitacc{\mathbb{N}}$ is a closed fractal. By the absorption theorem for closed fractals \cite[Theorem 2.4]{paulyleroux}, this means that $\pitacc{\mathbb{N}} \leqW \C_\mathbb{N}$ in turn would imply that $\pitacc{\mathbb{N}}$ is computable, which is readily seen to be false.
\end{proof}

The diamond operator \cite[Definition 9]{NP18} roughly captures the possibility of using a multi-valued function as oracle an arbitrary but finite number of times during a computation (with the additional requirement of having to declare, at some finite stage, that no more oracle calls will be made). It can be equivalently defined in terms of the following reduction game.

\begin{definition}[{\cite[Definitions 4.1, 4.3]{HJ16}, see also \cite[Definition 6.1]{soldavalenti}}]
	Let $f,g\pmfunction{\Baire}{\Baire}$ be two partial multi-valued functions. We define the reduction game $G(f\to g)$ as the following two-player game: on the first move, Player 1 plays $x_0\in \dom(g)$, and Player 2 either plays an $x_0$-computable $y_0\in g(x_0)$ and declares victory, or responds with an $x_0$-computable instance $z_1$ of $f$.
	
	For $n > 1$, on the $n$-th move (if the game has not yet ended), Player 1 plays a solution $x_{n-1}$ to the input $z_{n-1}\in\dom(f)$. Then Player 2 either plays a $\pairing{x_0,\hdots,x_{n-1}}$-computable solution to $x_0$ and declares victory, or plays a $\pairing{x_0,\hdots,x_{n-1}}$-computable instance $z_n$ of $f$.
	
	If at any point one of the players does not have a legal move, then the game ends with a victory for the other player. Player 2 wins if it ever declares victory (or if Player 1 has no legal move at some point in the game). Otherwise, Player 1 wins.
	
	We define $f^\diamond\pmfunction{\mathbb{N}\times\Baire}{(\Baire)^{<\mathbb{N}}}$ as the following problem:
	\begin{itemize}
		\item $\dom(f^\diamond)$ is the set of pairs $(e,d)$ s.t.\ Player 2 wins the game $G(f\to \id)$ when Player 1 plays $d$ as his first move, and $\Phi_e$ is a winning strategy for Player 2;
		\item a solution is the list of moves of Player $1$ for a run of the game.
	\end{itemize}

	Observe that $g \weireducible f^\diamond$ iff Player 2 has a computable winning strategy for the game $G(f\to g)$, i.e.\ if there is a Turing functional $\Phi$ such that Player 2 always plays $\Phi(x_0, \pairing{x_1,\hdots,x_{n-1}})$, and wins independently of the strategy of Player 1. 
	
	We described the game assuming that $f,g$ have domain and codomain $\Baire$. The definition can be extended to arbitrary multi-valued functions, and the moves of the players are \emph{names} for the instances/solutions.
\end{definition}

\begin{proposition}
	\thlabel{prop:binarypart}
	$\Fin{k}{\pitacc{k+2}^\diamond} \equivW \id$ for every $k$.
\end{proposition}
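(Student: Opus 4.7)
The plan is to unravel the reduction game $G(\pitacc{k+2}\to g)$ for an arbitrary $g\pmfunction{\Baire}{\mathbf{k}}$ with $g\leqW\pitacc{k+2}^\diamond$, and show that a computable winning strategy $\sigma$ for Player 2 can be simulated without any oracle calls, yielding a computable solution to $g$. Fix such a $\sigma$ and an $x\in\dom(g)$, and organize the game as a tree whose nodes are finite histories $h=(a_0,\dots,a_{n-1})\in(k+2)^{<\mathbb{N}}$ of adversary responses: at each $h$, $\sigma(x,h)$ either declares a final output in $\mathbf{k}$ (making $h$ a leaf), or produces the next $\pitacc{k+2}$-query $p_h\in(k+2)^\mathbb{N}$ (in which case $h$ is internal with $k+2$ potential children). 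Call $h$ \emph{legal} if each $a_i$ is a valid $\pitacc{k+2}$-answer to $p_{(a_0,\dots,a_{i-1})}$, i.e.\ $a_i\neq\lim_m p_{(a_0,\dots,a_{i-1})}(m)$ whenever that limit exists. Since $\sigma$ is winning, no legal history has an infinite legal extension, so the subtree of legal histories is well-founded and carries an ordinal rank.

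The core algorithm is the recursive procedure $A(x,h)$: start running $\sigma(x,h)$; if it declares output $y\in\mathbf{k}$, return $y$; otherwise, launch in parallel the recursive calls $A(x,h\concat a)$ for every $a\in\{0,\dots,k+1\}$, and wait until two \emph{distinct} indices $a\neq a'$ both produce the same value $y\in\mathbf{k}$, at which point return that common $y$. The main claim, proved by transfinite induction on the legal rank of $h$, is that $A(x,h)$ terminates on every legal $h$ and returns a correct $g$-solution. Leaves are immediate from the winning condition. At an internal legal node, \thref{thm:pi02acck} ensures that at most one $a\in\{0,\dots,k+1\}$ (namely $\lim p_h$, if it exists) is an illegal $\pitacc{k+2}$-answer, so at least $k+1$ children are legal; by the inductive hypothesis each of their recursive calls terminates in some element of $\mathbf{k}$, and since $k+1>k=|\mathbf{k}|$, pigeonhole forces two distinct legal children to agree, so the parallel wait in $A$ succeeds.

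The subtle point is that the matching pair $a\neq a'$ located by $A$ might involve a child that is \emph{illegal}, on which $A$ is a priori uncontrolled (the recursive call could diverge or return any value). This is resolved by the same pigeonhole observation: at most one index in $\{0,\dots,k+1\}$ is illegal, so any matching pair necessarily contains a legal index, whose recursive return value is a correct $g$-solution by the inductive hypothesis; hence the returned $y$ is always correct. Applying $A(x,\varepsilon)$ thus produces a $g$-solution from $x$ uniformly, so every $g\leqW\pitacc{k+2}^\diamond$ with codomain $\mathbf{k}$ is computable, which is exactly the statement that $\Fin{k}{\pitacc{k+2}^\diamond}\equivW\id$.
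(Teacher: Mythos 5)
Your proof is correct and follows essentially the same approach as the paper: both exploit that at most one of the $k+2$ possible Player-1 responses to each query is invalid, apply pigeonhole to the $\geq k+1$ valid children (whose recursive outputs lie in $\mathbf{k}$), and induct on the rank of the well-founded tree of legal game histories. The only difference is presentational: the paper first extracts by unbounded search a finite well-founded subtree $S$ with exactly $k+1$ successors per node and then labels it bottom-up, whereas you fold both steps into a single recursive parallel-dovetailing procedure, which is arguably cleaner since it sidesteps the need to justify the search for $S$ and handles potentially divergent illegal branches transparently.
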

\begin{proof}
	Let $g\pmfunction{\Baire}{k}$ be such that $g\weireducible\pitacc{k+2}^\diamond$ and let $\Phi$ be the computable functional witnessing the winning strategy for Player 2 for the game $G= G(\pitacc{k+2}\to g)$. For each input $x\in\dom(g)$, we can build the c.e.\ tree $T$ of all possible runs of the game $G$ where Player 1 starts by playing $x$. More precisely, for $\sigma\in (k+2)^{<\mathbb{N}}$ and $i<k+2$, we enumerate $\sigma\concat i \in T$ if $\sigma \in T$ and Player 2 does not declare victory on the $(\length{\sigma}+1)$-th move. Moreover, if Player 2 declares victory on the $(\length{\sigma}+1)$-th move, we enumerate $\sigma\concat \pairing{k+2, \Phi(x, \sigma)}$ in $T$ and commit to never extend this branch further. Intuitively, if Player 2 declares victory then we reached a leaf of the tree, and the leaf has a special label that contains the last Player 2's move, i.e.\ the solution to $g(x)$ obtained using $\Phi$ when the oracle answers are $\sigma(0),\hdots,\sigma(\length{\sigma}-1)$.

	Observe that, each non-leaf $\sigma \in T$ has exactly $k+2$-many children, and at least $k+1$ of them correspond to valid Player 1 moves (i.e.\ possible $\pitacc{k+2}$-answers). Observe also that the tree $T$ need not be well-founded and not all the leaves are necessarily labelled as such. Indeed, the hypothesis that Player 2 has a winning strategy only guarantees that the subtree $S\subseteq T$ corresponding to valid runs of the game is well-founded (and it is computable to tell if a string in $S$ is a label). 

	By unbounded search, we can compute a well-founded tree $S\subseteq T$ such that, $\varepsilon\in S$, each non-leaf $\sigma\in S$ has exactly $(k+1)$-many children, and each leaf of $S$ is of the form $\tau\concat \pairing{k+2, \Phi(x, \tau)}$ for some $\tau\in (k+2)^{<\mathbb{N}}$ and $\Phi(x, \tau)<k$. To compute a solution for $g(x)$ we proceed recursively on the (finite) rank of $S$: each leaf $\tau\concat \pairing{k+2, \Phi(x, \tau)}$ of $S$ is simply labelled with $\Phi(x, \tau)$. To compute the label of a non-leaf $\sigma$, observe that, by the pigeonhole principle, we can choose $i<k$ such that at least two of the $k+1$ children of $\sigma$ are labelled with $i$. We can then label $\sigma$ with $i$ and move to the next stage. 

	We claim that the label of $\varepsilon$ is a valid solution for $g(x)$. Indeed, assume $\tau\in S$ corresponds to a valid run of the game, namely for each $i<\length{\tau}$, $\tau(i)$ is a valid $(n+2)$-th move for Player 1 when Player 2 plays according to $\Phi$ and the first $(n+1)$ moves of Player 1 are $x, \tau(0),\hdots, \tau(i-1)$. By induction, any such $\tau$ receives a label $i<k$ such that $i\in g(x)$. The statement follows from the fact that $\varepsilon$ always represents a valid run of the game.
\end{proof}

Note that the previous proof only uses that $\pitacc{k+2}$ is a problem with codomain $k+2$ having at most one incorrect answer for every input. Therefore, for every pointclass $\boldsymbol{\Gamma}\in \{\boldfaceSigma^0_k,\boldfacePi^0_k,\boldfaceDelta^0_k, \boldfaceSigma^1_1,\boldfacePi^1_1,\boldfaceDelta^1_1 \}$, we immediately obtain:

\begin{corollary}
	\thlabel{thm:fin_k_acck+2_computable}
$\Fin{k}{\parallelization{\boldsymbol{\Gamma}\mathsf{-ACC}_{k+2}}} \weiequiv \id$ for every $k\ge 1$.
\end{corollary}
\begin{proof}
	This can be proved observing that, for every $f$,  $\Fin{k}{f} \weireducible \fop{f}$, and therefore $\Fin{k}{f} \weiequiv \Fin{k}{\fop{f}}$, as $\Fin{k}{f}$ is the strongest problem with codomain $k$ that is reducible to $f$. This implies that  
	\[ \Fin{k}{\parallelization{\boldsymbol{\Gamma}\mathsf{-ACC}_{k+2}}} \weiequiv \Fin{k}{\fop{\left(\parallelization{\boldsymbol{\Gamma}\mathsf{-ACC}_{k+2}} \right) }} \weireducible \Fin{k}{\boldsymbol{\Gamma}\mathsf{-ACC}_{k+2}^\diamond }\weireducible \id ~,  \]
	where the second inequality follows from the fact, for every first-order $f$, $\fop{\parallelization{f}}\weireducible f^\diamond$ (see \cite[Theorem 5.7 and Proposition 6.3]{soldavalenti}). The last inequality is the statement of \thref{prop:binarypart}.	
\end{proof}

\begin{corollary}
	\thlabel{thm:det_acck+2_computable}
$\Det{\parallelization{\boldsymbol{\Gamma}\mathsf{-ACC}_{k+2}}} \equivW \id$ for every $k\ge 2$.
\end{corollary}
\begin{proof}
	\thref{prop:Det_below_parallel_Det_2} and the definitions of deterministic/$k$-finitary part imply that, for every $f$ and every $k\ge 2$,
	\[ \Det{f} \weireducible \parallelization{\DetPartX{k}{f}} \leqW \parallelization{\Fin{k}{f}}. \]
	Letting $f= \parallelization{\boldsymbol{\Gamma}\mathsf{-ACC}_{k+2}}$, the claim follows from \thref{thm:fin_k_acck+2_computable}.
\end{proof}

We observe that \thref{prop:binarypart} cannot be improved to show that $\Fin{k}{\pitacc{k+1}^\diamond}$ is computable.

\begin{proposition}
\thlabel{prop:acckbelowpi02acckplus1}
	$\ACC_{k} \weireducible \Fin{k}{\pitacc{k+1}}$ for every $k\ge 2$.
\end{proposition}
\begin{proof}
	We use the equivalent formulation of $\pitacc{k+1}$ introduced in \thref{thm:pi02acck}.	For every $A\in\dom(\ACC_k)$, we uniformly compute a sequence $p\in\Baire$ as follows: 
	\[ p(s):=\begin{cases}
		i & \text{if } i \text{ is enumerated outside of }A \text{ by stage }s \\
		k & \text{otherwise.}
	\end{cases} \]
	Let $n<k+1$ be such that $n\neq \lim_{s\to\infty} p(s)$. If $n=k$ then $A\neq k$, therefore we can computably find (by unbounded search) the unique $m \in k\setminus A$. Conversely, if $n\neq k$ then it follows by definition of $p$ that $n\in A$. 
\end{proof}

The following proposition shows that for $k > 2$, the degree $\Fin{k}{\pitacc{k+1}}$ escapes a ``nice'' characterization. The exceptional case of $\Fin{2}{\pitacc{3}}$ is covered in \thref{corr:fin2pitcacc3} below.

\begin{proposition}
\thlabel{prop:finkpi02acckplus1}
	Let $\mathbb{N}_\bot := \mathbb{N}\cup \{\bot\}$ be the represented space where $\bot$ is represented by $0^\omega$ and every $n\in\mathbb{N}$ is represented by any $p\neq 0^\omega$ such that $n+1$ is the first non-zero element of $p$. 

	For every $k\ge 2$, $\Fin{k}{\pitacc{k+1}}$ is equivalent to the problem $F$ defined as follows: given $A\in \dom(\pitacc{k})$ and $n\in\mathbb{N}_\bot$ with the assumption that
	\begin{itemize}
		\item $n=\bot$ iff $|A| = k$
		\item if $n\in\mathbb{N}$ then $n\in A$,
	\end{itemize}
	find $m\in \pitacc{k}(A)$.
\end{proposition}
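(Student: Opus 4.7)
I would prove the equivalence by establishing the two reductions.

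For the easier direction $F \leqW \operatorname{Fin}_k(\pitacc{k+1})$, given a valid $F$-instance $(A, n)$, I would construct the $\pitacc{k+1}$-instance $B \subseteq \{0, \ldots, k\}$ defined by $i \in B$ iff ($i < k$ and $i \in A$) or ($i = k$ and $n \ne \bot$). The second disjunct, ``$n \ne \bot$'', is $\Sigma^0_1$ in the name of $n$ (detect a non-zero bit), so $B$ is $\boldsymbol{\Pi}^0_2$. The $F$-input constraints ensure $|B| \ge k$: when $n = \bot$ we have $|A| = k$ and $B = A$, while when $n \in \mathbb{N}$ we have $|A| \ge k - 1$ and $k \in B$. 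From a $\pitacc{k+1}$-answer $m \in B$, I output $m$ if $m < k$ (so $m \in A$), and $n$ if $m = k$ (forced to be in $\mathbb{N}$ and in $A$). This yields $F \leqW \pitacc{k+1}$ with codomain $\{0, \ldots, k-1\}$, hence $F \leqW \operatorname{Fin}_k(\pitacc{k+1})$ by maximality of $\operatorname{Fin}_k$.

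For the converse direction $\operatorname{Fin}_k(\pitacc{k+1}) \leqW F$, suppose $g \leqW \pitacc{k+1}$ via $\Phi, \Psi$ with $g$ having codomain $\{0, \ldots, k-1\}$. For $x \in \dom(g)$, let $B = \Phi(x)$. The natural candidate is $A = B \cap \{0, \ldots, k-1\}$, a $\boldsymbol{\Pi}^0_2$-subset of size $k-1$ or $k$: when $|A| = k - 1$, necessarily $k \in B$, whence $\Psi(x, k)$ halts and lies in $g(x)$. I would use the backward map $m \mapsto \Psi(x, m)$ (sound because $m \in A$ implies $m \in B$) and the hint $n := \Psi(x, k)$, Sierpinski-committed as soon as $\Psi(x, k)$ halts. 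To ensure $n \in A$ whenever $n$ commits, the plan is to enlarge $A$ by adjoining $\Psi(x, k)$ in its $\boldsymbol{\Pi}^0_2$-definition, so that the committed value is automatically absorbed.

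The main obstacle is satisfying the $F$-input constraint $n = \bot \iff |A| = k$, because the Sierpinski commit of $n$ (triggered when $\Psi(x, k) \downarrow$) does not automatically synchronize with the $\boldsymbol{\Pi}^0_2$-computed size of $A$: in particular $\Psi(x, k)$ can halt when $k \notin B$, threatening a commit in the $|A| = k$ regime. I would handle this via a case analysis on the $\pitacc{k+1}$-sequence $\sigma$ representing $B$ (cf.\ \thref{thm:pi02acck}): when $\lim \sigma \in \{0, \ldots, k-1\}$ we are in the ``deficit'' regime and the commit of $n$ is apt, while $\lim \sigma \in \{k, \mathrm{DNE}\}$ should give $|A| = k$ with the enlargement absorbing any stray commit. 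The delicate subcase is when $\Psi(x, k)$ equals the missing index $m^*$ of $B$, where the absorbed commit can push $|A|$ back to $k$ while $n$ has already committed; I anticipate resolving this by a finer choice of $A$ that exploits the fact that ``$|A| < k$'' becomes Sierpinski-detectable when $A$ is presented as a co-c.e.\ set, or by iterating through $\Psi(x, \Psi(x, k))$ and subsequent values until a valid hint emerges. Once the $F$-instance is correctly built, soundness of $m \mapsto \Psi(x, m)$ follows immediately from $A \subseteq B$.
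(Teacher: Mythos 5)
The forward direction (showing $F \leqW \pitacc{k+1}$, then invoking that $F$ has codomain $k$ to get $F \leqW \operatorname{Fin}_k(\pitacc{k+1})$) is correct and essentially matches the paper's construction, just phrased in terms of the $\boldsymbol{\Pi}^0_2$-set $B$ rather than a $\pitacc{k+1}$-name as a sequence.

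The converse direction has a genuine gap, and you have correctly identified where it is but not closed it. The difficulty is synchronizing the $\Sigma^0_1$ commit of the hint $n$ with the $\boldsymbol{\Pi}^0_2$ event ``$|A| = k-1$''. Your proposed fixes do not resolve this. First, $A$ is a $\boldsymbol{\Pi}^0_2$-set, not a co-c.e.\ ($\Pi^0_1$) one, so ``$|A| < k$'' is not Sierpi\'nski-detectable as you suggest. Second, adjoining $\Psi(x,k)$ to $A$ destroys the inclusion $A \subseteq B$ on which the soundness of your backward map $m \mapsto \Psi(x,m)$ relies: if the $F$-answer is $m = \Psi(x,k)$ and $\Psi(x,k) \notin B$, then $\Psi(x,m)$ need not be an $f$-solution. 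Third, in exactly the subcase you flag --- $k = \lim \Phi(x)$, so $B = \{0,\dots,k-1\}$ and $B\cap\{0,\dots,k-1\}$ already has size $k$, yet $\Psi(x,k)$ halts anyway --- adjoining $\Psi(x,k)$ is a no-op (it is already in $A$), leaving $|A| = k$ while $n \ne \bot$, an illegal $F$-instance. The underlying issue is that $\Psi(x,k)$ alone carries too little information to drive the commit of $n$.

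The paper's proof takes a genuinely different route that supplies exactly the missing certificate. Rather than building $A$ as a transformation of $B = \Phi(x)$, it applies the pigeonhole principle to all $k+1$ values $\Psi(x,0), \dots, \Psi(x,k) \in \{0,\dots,k-1\}$. By unbounded search one eventually observes, Sierpi\'nski-decidably, either a collision $\Psi(x,i) = \Psi(x,j) = n$ with $i<j$, or $k$ distinct values with no collision yet. A collision certifies $n \in f(x)$ (since at most one of $i,j$ can be the invalid $\pitacc{k+1}$-answer), so it is safe to commit the hint to $n$ at that moment; the $\pitacc{k}$-name $p$ for $A$ is then made to converge to $\lim_t \Psi(x,\Phi(x)(t))$ (modified to avoid $n$), which is precisely the one candidate non-solution of $f(x)$. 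If no collision is ever observed, $q := 0^\mathbb{N}$ and $p$ stays non-convergent, giving $|A| = k$ and $n = \bot$, and in that case $f(x) = \{0,\dots,k-1\}$ so any answer works. This pigeonhole on the full tuple of $\Psi$-values --- not just $\Psi(x,k)$ --- is the key idea your sketch lacks.
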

\begin{proof}
    We shall use the characterization of $\pitacc{k}$ introduced in \thref{thm:pi02acck}, i.e.\ we can assume that a name for $A\in \dom(\pitacc{k})$ is a sequence $p\in k^\mathbb{N}$ such that if $\lim_n p(n)$ exists then it is the unique $i<k$ not in $A$.

	Let us first show that $F\weireducible \pitacc{k+1}$.  Let $\pairing{p, q}$ be a name for $(A,n)\in \dom(F)$. We define a sequence $r\in (k+1)^\mathbb{N}$ as follows: as long as $q(i)=0$, let $r(i):=k$. If $q(i)\neq 0$ for some $i$, then, for every $j\ge i$, $r(j):=p(j)$. 

	Clearly, $r$ is a (name for a) valid input for $\pitacc{k+1}$. Observe that if $q$ is not constantly $0$ then $|A|<k$, hence the sequence $p$ has a limit $l<k$. In particular, $r$ always has a limit. We claim that, given $m\neq \lim_{i\to\infty} r(i)$, we can uniformly compute a valid solution for $F(A,n)$. Indeed, if $m<k$, then $m\in A$. This follows from the fact that if $q$ is constantly $0$ then $|A|=k$. Conversely, if $q$ is not constantly $0$ then $m\neq\lim_i p(i)$, and therefore $m\in A$. On the other hand, if $m=k$ then $q$ is not constantly $0$, so, by hypothesis, if $q(i)$ is the first non-zero element of $q$ then $q(i)-1\in A$. This shows that $F\weireducible \pitacc{k+1}$.

	Assume now that $f\pmfunction{\Baire}{k}$ is such that $f\weireducible \pitacc{k+1}$ via $\Phi,\Psi$. For the sake of readability, we can assume that $\Psi$ has codomain $\mathbb{N}$. To reduce $f$ to $F$, suppose we are given some $x \in \dom(f)$. In parallel, we can simulate the computations $\Psi(x,i)$ for every $i<k+1$. We can uniformly compute two sequences $p, q\in k^\mathbb{N}$ as follows: we wait until we either find $i<j<k+1$ such that $\Psi(x,i)=\Psi(x,j)=n$ or we see that $|\{ \Psi(x,i) \st i<k+1\}|=k$ (by the pigeonhole principle, at least one of two cases must happen). If we first find $i<j<k+1$ such that $\Psi(x,i)=\Psi(x,j)=n$, we define $p,q$ so that $0^s (n+1)\prefix q$ for some $s$ and $p$ is any convergent sequence with $\lim p \neq n$. On the other hand, if we first see that $|\{ \Psi(x,i) \st i<k+1\}|=k$, we proceed as follows: we keep searching for $i<j$ as above. If they are never found, we let $q:=0^\omega$ and $p$ be any non-convergent sequence. Conversely, if they are found, we define $q$ so that $0^s (n+1)\prefix q$ for some $s$. Moreover, we let $p$ be a convergent sequence with $\lim p\neq n$ such that if $\lim_t \Psi(x,\Phi(x)(t))$ exists and is different from $n$ then $\lim p = \lim_t \Psi(x,\Phi(x)(t))$. This can be done by computing $\Psi(x,\Phi(x)(t))$ and replacing each occurrence of $n$ with some $m\neq n$.

	Observe that $\pairing{p,q}$ is a valid name for an input $(A,n_\bot)$ of $F$. Indeed, $q$ is constantly $0$ iff $p$ is not convergent, i.e.\ iff $p$ is a name for the input $A$ of $\pitacc{k}$ with $|A|=k$. Moreover, if $q$ is not constantly $0$ then it is a name for some $n\in \mathbb{N}$ with $n\neq \lim p$. Given $m\in F(A,n_\bot)$, we can uniformly compute a solution for $f(x)$ as follows: we wait until we see that $q$ is the name for some $n\neq \bot$ or that $|\{ \Psi(x,i) \st i<k+1\}|=k$. If the former is observed first, then $n\in f(x)$, as there are $i<j$ with $\Psi(x,i)=\Psi(x,j)=n$, and either $i$ or $j$ is a valid solution for $\pitacc{k+1}(\Phi(x))$. Conversely, if we first see that $|\{ \Psi(x,i) \st i<k+1\}|=k$, then any $m\in F(A,n_\bot)$ is a valid solution for $f(x)$. Indeed, if $|f(x)|<k$ then the (unique) wrong solution is $\lim \Psi(x,\Phi(x))$ (as $\pitacc{k+1}(\Phi(x))$ has at most one wrong solution). By construction, $m\neq \lim p = \lim \Psi(x,\Phi(x))$, and therefore $m\in f(x)$. Observe that, in this case, $q$ is not the constantly $0$ sequence (if $q$ is constantly $0$ then every $i<k+1$ is valid solution for $f(x)$). 
\end{proof}

\begin{corollary}
\thlabel{corr:fin2pitcacc3}
$\Fin{2}{\pitacc{3}}\equivW \C_2$.
\begin{proof}
As $\ACC_2 \equivW \C_2$, one direction follows from \thref{prop:acckbelowpi02acckplus1}. That $\Fin{2}{\pitacc{3}} \leqW \C_2$ follows from \thref{prop:finkpi02acckplus1} by observing $\C_2$ can solve the task ``Given $n \in \mathbb{N}_\bot$, compute some $b \in \{0,1\}$ such that if $n \in \{0,1\}$, then $b = n$''.
\end{proof}
\end{corollary}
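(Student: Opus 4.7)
The plan is to prove the two directions separately, leveraging the characterizations already established in this section.

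For the direction $\C_2 \leqW \Fin{2}{\pitacc{3}}$: I would simply invoke \thref{prop:acckbelowpi02acckplus1} with $k=2$, which directly gives $\ACC_2 \leqW \Fin{2}{\pitacc{3}}$. Combined with the well-known equivalence $\ACC_2 \equivW \C_2$, this yields the desired reduction.

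For the converse direction $\Fin{2}{\pitacc{3}} \leqW \C_2$, I would use \thref{prop:finkpi02acckplus1} with $k=2$, which tells us that $\Fin{2}{\pitacc{3}}$ is equivalent to the problem $F$ that, given $A \in \dom(\pitacc{2})$ and $n \in \mathbb{N}_\bot$ satisfying the promise that $n=\bot$ iff $|A|=2$ and $n \in A$ whenever $n \in \mathbb{N}$, asks to produce some $m \in A$. Since the promise forces $n \in \{0,1\}$ whenever $n \neq \bot$, it suffices to describe how $\C_2$ solves the following auxiliary task: given $n \in \mathbb{N}_\bot$, produce $b \in \{0,1\}$ such that if $n \in \{0,1\}$ then $b = n$. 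From such a $b$ we immediately obtain a solution to $F$: if $n = \bot$ then $|A| = 2$ so any $b$ lies in $A$, and if $n \in \{0,1\}$ then $b = n \in A$ by the promise.

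To solve this auxiliary task with $\C_2$, I would uniformly build a co-c.e.\ subset $B \subseteq \{0,1\}$ by starting with $B = \{0,1\}$ and scanning the $\mathbb{N}_\bot$-name of $n$: as soon as the name reveals some $m \in \{0,1\}$, enumerate $1-m$ out of $B$. The set $B$ is nonempty throughout (it is $\{0,1\}$ if $n = \bot$ and the singleton $\{n\}$ otherwise), so it is a valid $\C_2$-instance, and any $b \in B$ satisfies the required property.

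There is no serious obstacle here, since \thref{prop:acckbelowpi02acckplus1} and \thref{prop:finkpi02acckplus1} do all the heavy lifting; the only thing to be careful about is verifying that the $\C_2$-instance is always nonempty and that its solutions land in $A$, which follows transparently from the promise on $(A,n)$.
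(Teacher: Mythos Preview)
Your proposal is correct and follows exactly the same approach as the paper's proof: both directions invoke \thref{prop:acckbelowpi02acckplus1} and \thref{prop:finkpi02acckplus1} respectively, and the upper bound reduces to the same auxiliary task of recovering $n \in \{0,1\}$ from an $\mathbb{N}_\bot$-name using $\C_2$. You have simply spelled out the details (the construction of the co-c.e.\ set $B$ and the verification that any $b \in B$ solves $F$) that the paper leaves implicit.
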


\section{Finding antichains in trees}
\label{sec:antichain}

We turn our attention to trees as a special case of partial orders, where we identify a tree with its prefix relation. We observe that a tree is a wqo iff it has finite width; in particular, being wqo is merely a $\Sigma^0_2$-property for trees rather than $\Pi^1_1$ as for general quasi-orders. While trees can only avoid being wqo by having an infinite antichain, being a bad sequence is a weaker notion than being an antichain. However, a given infinite bad sequence in a tree can be computably thinned out to yield an infinite antichain. We let $\BStree$ denote the restriction of $\BS$ to subtrees of $\baire$, and think of it as the problem that receives as input a tree of infinite width and returns an infinite antichain in it. How $\BStree$ and its first-order part relate to other principles, in particular those discussed in the previous section, is depicted in Figure \ref{fig:summary:bstree}.

\begin{figure}[htb]
	\begin{center}
		\def\angle{90}
		\def\dist{2.2}
		\def\chainstretch{1.7}
		\begin{tikzpicture}[scale=0.8, node distance=10mm and 5mm, on grid=true]
	
			\node (ACCN) {$\ACC_\mathbb{N}$};
		
			\path (ACCN) 
				++ (\angle:\chainstretch*\dist) node (ACCk1) {$\ACC_{k+1}$}%
				++ (\angle:\dist) node (ACCk) {$\ACC_{k}$}%
				++ (\angle:\chainstretch*\dist) node (ACC2) {$\ACC_2 \weiequiv \Choice{2}$};

			\node [above right=1.7cm and 3.3cm of ACCN] (pACCN) {$\pitaccN$};
			\path (pACCN) 
				++ (\angle:\chainstretch*\dist/2) node (BST1) {$\fop{\BStree}$}%
				++ (\angle:\chainstretch*\dist/2) node (pACCk1) {$\pitacc{k+1}$}%
				++ (\angle:\dist) node (pACCk) {$\pitacc{k}$}%
				++ (\angle:\chainstretch*\dist) node (pACC2) {$\pitacc{2} \weiequiv \RT{1}{2}$};

			\node [above right=0cm and 3.3cm of pACCN] (parACCN) {$\parallelization{\ACC_\mathbb{N}}$};
			\path (parACCN) 
				++ (\angle:\chainstretch*\dist) node (parACCk1) {$\parallelization{\ACC_{k+1}}$}%
				++ (\angle:\dist) node (parACCk) {$\parallelization{\ACC_{k}}$}%
				++ (\angle:\chainstretch*\dist) node (parACC2) {$\parallelization{\ACC_{2}}\weiequiv \Choice{\Cantor}$};
	
			\node [above right=1.7cm and 3.3cm of parACCN] (parpACCN) {$\parallelization{\pitaccN}$};
			\path (parpACCN) 
				++ (\angle:\chainstretch*\dist) node (parpACCk1) {$\parallelization{\pitacc{k+1}}$}%
				++ (\angle:\dist) node (parpACCk) {$\parallelization{\pitacc{k}}$}%
				++ (\angle:\chainstretch*\dist) node (parpACC2) {$\parallelization{\pitacc{2}}\weiequiv \KL$};
		
			\node [above right=1.4cm and 5cm of BST1] (BST) {$\BStree$};
	
			\arrowchain{ACCN}{ACCk1}{0.33}{0.66}
			\draw[->] (ACCk1) -- (ACCk);
			\arrowchain{ACCk}{ACC2}{0.33}{0.66}
	
			\draw[->] (pACCN) -- (BST1);
			\arrowchain{BST1}{pACCk1}{0.33}{0.66}
			\draw[->] (pACCk1) -- (pACCk);
			\arrowchain{pACCk}{pACC2}{0.33}{0.66}
	
			\arrowchain{parACCN}{parACCk1}{0.33}{0.66}
			\draw[->] (parACCk1) -- (parACCk);
			\arrowchain{parACCk}{parACC2}{0.33}{0.66}
	
			\arrowchain{parpACCN}{parpACCk1}{0.33}{0.66}
			\draw[->] (parpACCk1) -- (parpACCk);
			\arrowchain{parpACCk}{parpACC2}{0.33}{0.66}
			
			\draw[->] (ACCN) -- (parACCN);
			\draw[->] (ACCk1) -- (parACCk1);
			\draw[->] (ACCk) -- (parACCk);
			\draw[->] (ACC2) -- (parACC2);
	
			\draw[->] (ACCN) -- (pACCN);
			\draw[->] (ACCk1) -- (pACCk1);
			\draw[->] (ACCk) -- (pACCk);
			\draw[->] (ACC2) -- (pACC2);
	
			\draw[->] (pACCN) -- (parpACCN);
			\draw[->] (pACCk1) -- (parpACCk1);
			\draw[->] (pACCk) -- (parpACCk);
			\draw[->] (pACC2) -- (parpACC2);
	
			\draw[->] (parACCN) -- (parpACCN);
			\draw[->] (parACCk1) -- (parpACCk1);
			\draw[->] (parACCk) -- (parpACCk);
			\draw[->] (parACC2) -- (parpACC2);

			\draw[->] (BST1) -- (BST);
			\arrowchain{BST}{parpACCk1}{0.33}{0.66}
		\end{tikzpicture}	

		\caption{The figure shows all reductions between the depicted principles up to transitivity, with the potential exception that we have not ruled out being $\BStree \leqW \parallelization{\pitaccN}$. The diagram can be thought of as a cube whose vertical edges have decreasing chains parameterized by $k \geq 2$ on them, and the bottom corners are labelled by degrees which are below the ones on the edges for all $k$.} 
	\label{fig:summary:bstree}
	\end{center}
\end{figure}

Even though the instances of $\BStree$ are arbitrary subtrees of $\baire$ of infinite width, we show below that its strength comes already from the pruned subtrees of $\cantor$ (a tree is \emph{pruned} if it has no leaves). Compare this with the problem $\C_{\Baire}$ of finding an infinite path through a given ill-founded subtree of $\baire$: Its restriction to ill-founded pruned trees is of course computable, and its restriction $\C_\Cantor$ to subtrees of $\cantor$ is known to be much weaker (see \cite[Figure 11.2]{bgp21}). This is unsurprising given the difference in the complexity between ``$T$ is wqo'' and ``$T$ is well-founded''.

\begin{proposition}
\thlabel{prop:treeagnostic}
$\BStree$ is Weihrauch equivalent to its restriction to pruned subtrees of $\cantor$.
\begin{proof}
Given a tree $T \subseteq \baire$ of infinite width, we shall uniformly compute a pruned tree $S \subseteq \cantor$, also of infinite width, together with a surjective labelling function $\ell\function{S}{T}$, such that if $\ell(v) \prefix \ell(w)$ then $v \prefix w$. The image of any infinite $S$-antichain under $\ell$ would then be an infinite $T$-antichain.

Our construction proceeds in phases, where at the end of phase $n$, we commit to $S \cap 2^{<n}$ and the corresponding restriction of $\ell$. Throughout the construction we will respect the constraint that $\ell(v) \prefix \ell(w)$ implies $v \prefix w$. At the end of phase $n$, all leaves in $S$ will have height at least $n$, so $S$ will be ultimately be a pruned tree.

During phase $0$, we simply add the root to $S$ and label it with the root of $T$. Fix an ordering of the vertices of $T$ (with order-type $\omega$) which agrees with $\prefix$. In phase $n$, start by considering the first vertex $u \in T$ that is not yet enumerated in $\ran(\ell)$. Our constraint on $\ell$ implies that the set $\{v' \in S: \ell(v') \prefix u\}$ forms a path in $S$, say with final vertex $v'$. (Observe that $\ell(v')$ is the parent of $u$.) Let $v$ be the first vertex of height at least $n-1$ such that $v' \prefix v$ and either $v\concat 0 \notin S$ or $v\concat 1 \notin S$. Such $v$ exists because all leaves in $S$ have height at least $n-1$. We then add a child to $v$ (i.e., put $v\concat 0$ in $S$ if $v\concat 0 \notin S$, else put $v\concat 1$ in $S$) and label the child with $u$.

If we now have both $v\concat 0$ and $v\concat 1$ in $S$, we conclude phase $n$ by adding a child to each vertex in $S$ which was in $S$ at the beginning of phase $n$ and is currently a leaf in $S$. Each such child is labelled the same way as its parent in $S$. This ensures every leaf in $S$ has height at least $n$. Notice this action maintains our constraint on $\ell$.

On the other hand, if only one of $v\concat 0$ and $v\concat 1$ lie in $S$, we consider the next $u \in T$ that is not in $\ran(\ell)$ and repeat the above steps. To see why this process terminates, observe that if it did not, then there must be some $u \in T$ first considered in phase $n$ which has multiple children. Suppose we acted for $u$ by adding $v\concat 0$ to $S$ and defining $\ell(v\concat 0) = u$. Let $u'$ denote the first child of $u$ to be considered in phase $n$. Then $v\concat 0$ would be the first vertex of height at least $n-1$ which extends every $v'$ with $\ell(v') \prefix u'$, because the highest vertex $v'$ with $\ell(v') \prefix u'$ is precisely $v\concat 0$, which has height at least $n$. Therefore we would add $v\concat 00$ to $S$ and define $\ell(v\concat 00) = u'$. Similarly, if $u''$ denotes the second child of $u$ to be considered in phase $n$, then we would add $v\concat 01$ to $S$ and define $\ell(v\concat 01) = u''$. We would then conclude phase $n$, contrary to assumption.

This completes the description of our construction. We have ensured that $S$ contains infinitely many vertices $v$ with $v\concat 0,v\concat 1 \in S$, so $S$ has infinite width. The other requirements are directly enforced by the construction.
\end{proof}
\end{proposition}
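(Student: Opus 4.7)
The plan is to compute, given any $T \subseteq \baire$ of infinite width, a binary pruned tree $S \subseteq \cantor$ of infinite width together with a surjective labeling $\ell \function{S}{T}$ satisfying the invariant that $\ell(v) \prefix \ell(w) \Rightarrow v \prefix w$. Under this invariant, $\ell$ is injective on any antichain (equal labels would force each of $v, w$ to prefix the other), and an antichain in $S$ maps to an antichain in $T$ (a comparable pair of labels would force a comparable pair of $\cantor$-nodes). Hence, given any infinite $S$-antichain returned by a binary-tree $\BStree$-solver, the backward functional outputs its $\ell$-image, an infinite $T$-antichain, completing the non-trivial direction of the equivalence.

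I would build $S$ and $\ell$ in stages, after fixing an enumeration $u_0, u_1, \ldots$ of $T$ consistent with $\prefix$. Stage $0$ sets $\varepsilon \in S$ with $\ell(\varepsilon)$ equal to the root of $T$. At stage $n > 0$ I pick the first $u$ not yet in $\ran(\ell)$; the invariant and the enumeration guarantee that $\{v' \in S \st \ell(v') \prefix u\}$ is already a $\prefix$-chain in $S$, with some top element $v^*$ labeled by the parent of $u$. I walk along the currently unique extension of $v^*$ in $S$ until I reach some $v$ of height at least $n-1$ where one of $v0, v1$ is missing, and install the missing child into $S$ with label $u$. If this action made $v$ branch (both $v0, v1 \in S$), I close the stage by extending every leaf of height less than $n$ by one node, letting each new child copy its parent's label; otherwise I repeat with the next unlabeled $u$.

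The main technical obstacle is to show that each stage terminates and that the resulting $S$ has infinite width. For termination, the argument is that a purely unary growth of $S$ during stage $n$ would require all newly labeled vertices to be single children of nodes above $v^*$, but once two distinct children of some common $T$-vertex are processed in the same stage, the second one is forced to sit as the sibling of the first in $\cantor$, closing the stage; combined with the fact that some labeled vertex of $T$ must eventually branch, this bounds each stage. For infinite width of $S$, each time a $T$-vertex has multiple children, the construction is eventually forced to populate both $0$- and $1$-sides of some $\cantor$-node below it, and since $T$ has an infinite antichain this yields infinitely many $\cantor$-branchings. Surjectivity of $\ell$ is automatic from the enumeration, prefix preservation is maintained because each new labeled vertex is attached strictly above the vertex carrying its parent's label (and leaf-extension copies labels so no new prefix relations among distinct labels are introduced), and $S$ is pruned because every stage ends by extending all open leaves.
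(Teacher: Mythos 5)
Your construction is the same as the paper's: the same binary pruned tree $S$ with a label map $\ell \colon S \to T$ maintaining the invariant $\ell(v) \prefix \ell(w) \Rightarrow v \prefix w$, the same phase structure with an end-of-phase leaf extension to keep $S$ pruned, and the same idea that a sibling pair in $T$ processed within one phase forces the bifurcation that ends the phase. One small imprecision: the claim that the second processed child is ``forced to sit as the sibling of the first in $\cantor$'' is not literally correct when the common $T$-parent was labelled in an earlier phase (its label then carries a chain of copies above it), and the cleanest justification for infinite width is that each phase terminates by creating exactly one bifurcation, giving infinitely many bifurcations across infinitely many phases, rather than arguing bifurcation-per-$T$-branching.
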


Next, we shall characterize the finitary parts and deterministic part of $\BStree$. If $(T,\sqsubseteq)$ is a tree and $v$ is a vertex in $T$, define $[v]_T = \{w \in T: v \sqsubseteq w\}$. We omit the subscript $T$ if the tree in question is clear from context.

\begin{proposition}
\thlabel{prop:bswtrupperbound}
$\BStree \leqW \parallelization{\pitacc{k}}$ for every $k \geq 2$.
\begin{proof}
Given a tree $T \subseteq \baire$ with infinite width, we consider all antichains $(v_0,v_1,\ldots,v_{k-1})$ of size $k$ in it. For each such antichain, at most one vertex $v_i$ must be avoided when building an infinite antichain, which happens if $T\setminus[v_i]$ has finite width. We can use $\pitacc{k}$ to select a suitable choice amongst the $v_i$. We can then greedily build an infinite antichain from the selected vertices.
\end{proof}
\end{proposition}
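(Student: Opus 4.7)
The plan is to turn the informal sketch into an explicit greedy construction that prepares a countable family of $\pitacc{k}$-queries in advance through $\widehat{\pitacc{k}}$ and then assembles the output antichain from the answers.

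The combinatorial heart of the argument, which I would isolate as a claim, is the following. Let $S \subseteq T$ be a subtree of the form $S = T \setminus \bigcup_{w \in F}\{u \st w \prefix u\}$ for some finite set $F \subseteq T$, and suppose $S$ still has infinite width. Then for any size-$k$ antichain $(v_0, \ldots, v_{k-1})$ in $S$, the set
\[ G(S, \vec v) := \{ i < k \st S \setminus \{u \st v_i \prefix u\} \text{ has infinite width}\} \]
has at least $k-1$ elements. I would prove this by contradiction: if $v_i$ and $v_j$ with $i \neq j$ were both missing from $G(S, \vec v)$, then since $v_i$ and $v_j$ are incomparable, every $u \in S$ extending $v_i$ is incomparable with $v_j$, so $\{u \in S \st v_i \prefix u\} \subseteq S \setminus \{u \st v_j \prefix u\}$; the latter has finite width, so both the cone above $v_i$ in $S$ and its complement in $S$ have finite width, forcing $S$ itself to have finite width and giving a contradiction. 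Since "has infinite width" is a uniformly $\Pi^0_2$ property of trees presented by their prefix relation, $G(S, \vec v)$ is a uniformly $\Pi^0_2$ subset of $\{0, \ldots, k-1\}$, hence a valid instance of $\pitacc{k}$.

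Next I would enumerate all pairs $(F, \vec v)$ where $F$ is a finite antichain in $T$ and $\vec v$ is a size-$k$ antichain in $T_F := T \setminus \bigcup_{w \in F}\{u \st w \prefix u\}$, and feed the associated family of sets $G(T_F, \vec v)$ into $\widehat{\pitacc{k}}$, recovering selections $i(F, \vec v) \in G(T_F, \vec v)$. The output antichain is then built stage by stage: given $w_0, \ldots, w_{s-1}$ forming an antichain in $T$ with $T_{\{w_0, \ldots, w_{s-1}\}}$ of infinite width, I would invoke \thref{prop:treeagnostic} to reduce to the case $T \subseteq \cantor$ (so each level of $T$ is finite), search for a size-$k$ antichain $\vec v$ inside $T_{\{w_0, \ldots, w_{s-1}\}}$ at heights exceeding $\max_{i<s} |w_i|$, and set $w_s := v_{i(\{w_0, \ldots, w_{s-1}\}, \vec v)}$. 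The height restriction together with $w_s$ lying in the restricted tree makes $w_s$ incomparable with every prior $w_i$, while the defining property of $i(\cdot, \cdot)$ gives $T_{\{w_0, \ldots, w_s\}}$ of infinite width, so the induction continues and yields the desired infinite antichain in $T$. The only delicate point is the $k-1$ lower bound on $|G(S, \vec v)|$, handled by the disjoint-cones argument above; everything else is routine precomputation followed by greedy extraction.
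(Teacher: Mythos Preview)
Your approach matches the paper's sketch and correctly fleshes out the greedy construction, including the key combinatorial claim. There is one gap to address: you feed the sets $G(T_F,\vec v)$ into $\widehat{\pitacc{k}}$ for \emph{all} enumerated pairs $(F,\vec v)$, but your inequality $|G(T_F,\vec v)|\geq k-1$ was proved only under the hypothesis that $T_F$ has infinite width. For pairs where $T_F$ has finite width (and you cannot computably exclude these in advance), one has $G(T_F,\vec v)=\emptyset$, which lies outside $\dom(\pitacc{k})$; hence the assembled sequence of instances is not in $\dom(\widehat{\pitacc{k}})$.

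The fix is routine: use the sequence formulation of $\pitacc{k}$ from \thref{thm:pi02acck}. For each $(F,\vec v)$ define $p_{(F,\vec v)}(s)$ to be the index $i<k$ whose cone $\{u\in T_F: v_i\prefix u\}$ contains the largest antichain discovered by stage $s$ (ties broken by least index). If $T_F$ has infinite width and a unique $i$ is missing from $G(T_F,\vec v)$, then that $i$ indexes the only cone of infinite width (since $T_F\setminus[v_i]$ has finite width and contains every other cone), so $p_{(F,\vec v)}$ converges to $i$ and any output $j\neq i$ lies in $G(T_F,\vec v)$; in every other case your greedy construction never consults this particular answer, so its value is immaterial. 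With this adjustment your argument goes through.
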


\begin{corollary}
	\thlabel{cor:fin_det_bstree}
$\Fin{k}{\BStree} \equivW \id$ for every $k\ge 1$. Therefore, $\Det{\BStree} \equivW \id$.
\end{corollary}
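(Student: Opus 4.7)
The plan is to derive both equivalences as essentially free consequences of the two ingredients just established, namely the upper bound $\BStree \leqW \widehat{\pitacc{k}}$ from \thref{prop:bswtrupperbound} and the triviality of $\Fin{k}{\parallelization{\pitacc{k+2}}}$ from \thref{thm:fin_k_acck+2_computable}. The nontrivial content is entirely in the section preceding this corollary, so there is no real obstacle to overcome here; the main task is just to assemble the pieces correctly and invoke monotonicity of the $\Fin{k}{\cdot}$ and $\Det{\cdot}$ operators.

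First I would fix an arbitrary $k \geq 2$ and combine \thref{prop:bswtrupperbound} (instantiated at $k+2$) with monotonicity of $\Fin{k}{\cdot}$ to obtain
\[ \Fin{k}{\BStree} \leqW \Fin{k}{\parallelization{\pitacc{k+2}}} \equivW \id, \]
where the equivalence is \thref{thm:fin_k_acck+2_computable}. This immediately yields $\Fin{k}{\BStree} \equivW \id$ for every $k$, which gives $\Fin{}{\BStree} \equivW \id$ in the Cipriani--Pauly sense (the finitary part being the supremum of the $\Fin{k}{\cdot}$ over $k$).

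For the deterministic part, I would apply the proposition from Section \ref{sec:background} stating $\Det{f} \leqW \widehat{\operatorname{Det}_\mathbf{2}(f)}$, together with $\operatorname{Det}_\mathbf{2}(f) \leqW \Fin{2}{f}$, to get
\[ \Det{\BStree} \leqW \widehat{\operatorname{Det}_\mathbf{2}(\BStree)} \leqW \widehat{\Fin{2}{\BStree}} \equivW \widehat{\id} \equivW \id, \]
using the previous paragraph in the third step. Since the reverse reduction $\id \leqW \Det{\BStree}$ is trivial, this completes the proof. The only place where one needs to be slightly careful is checking that the computable ``upper bound'' construction inside \thref{prop:bswtrupperbound} is indeed a Weihrauch reduction (not just a continuous one) so that the monotonicity of $\Fin{k}{\cdot}$ actually applies, but this is immediate from the proof given there.
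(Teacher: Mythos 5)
Your proof is correct and takes essentially the same approach as the paper: both combine \thref{prop:bswtrupperbound} (with $k+2$) and \thref{thm:fin_k_acck+2_computable} by monotonicity. The only cosmetic difference is that you re-derive the deterministic bound from $\Fin{2}{\BStree}$ via $\Det{f} \leqW \widehat{\operatorname{Det}_\mathbf{2}(f)} \leqW \widehat{\Fin{2}{f}}$, whereas the paper directly cites \thref{thm:det_acck+2_computable}; these amount to the same observation applied one step earlier or later.
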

\begin{proof}
	For the first part of the statement, fix $k\ge 1$. By \thref{prop:bswtrupperbound} and \thref{thm:fin_k_acck+2_computable}, 
	\[ \Fin{k}{\BStree}\weireducible \Fin{k}{\parallelization{\pitacc{k+2}}} \weiequiv \id. \] 
	Similarly, the second part of the statement then follows from $\Det{\parallelization{\pitacc{k+2}}} \equivW \id$ (\thref{thm:det_acck+2_computable}).
\end{proof}

We now turn our attention to the first-order part of $\BStree$. The following first-order problem clearly reduces to $\BStree$.

\begin{definition}
Let $\ExtVer$ denote the problem taking a tree $T$ of infinite width, and returning a vertex $v \in T$ such that $T \setminus [v]$ still has infinite width, i.e., a vertex that is extendible to an infinite antichain.    
\end{definition}

The construction employed to prove \thref{prop:treeagnostic} also shows that for $\ExtVer$ it is immaterial whether we consider countably-branching or binary trees, and whether we assume the trees to be pruned or not.

\begin{proposition}
	\thlabel{thm:fop_bstree}
$\ExtVer \equivW \fop{\BStree}$.
\end{proposition}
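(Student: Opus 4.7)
The plan is to prove the two reductions separately. For the easy direction $\ExtVer \leqW \fop{\BStree}$, I observe that $\ExtVer \leqW \BStree$ (apply $\BStree$ to the input tree to produce an infinite antichain and return its first vertex, which is automatically extendible) and that $\ExtVer$'s codomain is essentially $\mathbb{N}$ since vertices of trees in $\mathbb{N}^{<\mathbb{N}}$ code canonically as natural numbers. By the universal property defining $\fop{\BStree}$ as the maximum first-order problem Weihrauch-below $\BStree$, we get $\ExtVer \leqW \fop{\BStree}$.

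For the nontrivial direction $\fop{\BStree} \leqW \ExtVer$, I take any problem $f$ with codomain $\mathbb{N}$ satisfying $f \leqW \BStree$ via forward/backward functionals $\Phi, \Psi$ and show $f \leqW \ExtVer$. Given $x \in \dom(f)$, let $T := \Phi(x)$. Because $\Psi(x,A) \in f(x) \subseteq \mathbb{N}$ for every infinite antichain $A$ in $T$, and $\Psi$'s output depends on only a finite prefix of $A$, it follows that for every finite antichain $\sigma$ in $T$ with $\Psi(x,\sigma)\downarrow$ which is extendible to an infinite antichain in $T$, the value $\Psi(x,\sigma)$ lies in $f(x)$. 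I plan to construct computably from $x$ a tree $T' = T'(x)$ of infinite width, so that from any $v = \ExtVer(T')$ one can uniformly compute such an $n$. Each vertex of $T'$ will be associated to a $\Psi$-converged antichain $\sigma_v$ in $T$ and labelled by $n_v := \Psi(x,\sigma_v)$, which is the value the extraction procedure will output; correctness reduces to arranging $T'$ so that $T'$-extendibility of $v$ forces $\sigma_v$ to be $T$-extendible.

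The main obstacle is exactly this property. The naive choice, taking $T'$ to be the tree of all $\Psi$-converged antichains of $T$ under sequence prefix, fails: two singleton antichains $(u)$ and $(v)$ with $u \prefix_T v$ are sequence-incomparable as vertices of $T'$, so a $T$-non-extendible vertex (e.g.\ a common ancestor of an infinite antichain) can still be $T'$-extendible through unrelated singletons $(v_1), (v_2),\ldots$, even though none of these extend it in $T$. To repair this, I would attach below each labelled vertex $\sigma$ of $T'$ a copy of the subtree $\tilde T_\sigma \subseteq T$ obtained as the prefix closure in $T$ of the vertices that are $\prefix_T$-incomparable with every element of $\sigma$; by construction $\tilde T_\sigma$ has infinite width precisely when $\sigma$ is extendible in $T$, so $\tilde T_\sigma$ acts as a structural witness for $T$-extendibility.

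The delicate point, and the step that must be verified carefully, is the global organization of $T'$: the labelled vertices must be threaded through $T'$ in a way (for instance along a single spine supplemented by the $\tilde T_\sigma$-branches, rather than as a disjoint union hanging off the root) that prevents infinite antichains of $T'$ from spanning multiple labels and thereby granting ``free'' extendibility to labels with $n \notin f(x)$. Once the scaffolding is chosen so that any infinite antichain of $T'$ passing through a labelled vertex $v$ must essentially live inside the corresponding $\tilde T_{\sigma_v}$, the implication ``$v$ extendible in $T' \Rightarrow \sigma_v$ extendible in $T \Rightarrow n_v \in f(x)$'' goes through, completing the reduction.
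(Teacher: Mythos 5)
The easy direction is fine. For $\fop{\BStree} \leqW \ExtVer$, you correctly identify the core difficulty (the naive ``tree of $\Psi$-converged antichains under sequence prefix'' does not transport $T'$-extendibility to $T$-extendibility) and you propose a sensible-looking repair via the subtrees $\tilde T_\sigma$ of $T$-vertices incomparable to all of $\sigma$. The equivalence ``$\tilde T_\sigma$ has infinite width iff $\sigma$ is $T$-extendible'' is indeed correct (the prefix closure adds only finitely many ancestors of $\sigma$). However, you then explicitly leave the decisive step open: you say the labelled vertices ``must be threaded through $T'$ in a way \ldots\ that prevents infinite antichains of $T'$ from spanning multiple labels,'' gesture at ``a single spine supplemented by $\tilde T_\sigma$-branches,'' and ask the reader to accept that ``once the scaffolding is chosen'' the argument goes through. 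That is precisely where the proof lives, and the sketch does not carry it out. Worse, the spine idea does not obviously work: if each $\tilde T_\sigma$ you hang off the spine has finite width, an infinite $T'$-antichain can be assembled by drawing finitely many vertices from infinitely many different $\tilde T_\sigma$'s above a labelled vertex $v$, witnessing $T'$-extendibility of $v$ without witnessing $T$-extendibility of $\sigma_v$. Your own identification of ``free extendibility'' is exactly this phenomenon, and the proposal does not rule it out.

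The paper avoids this by using a different invariant instead of attaching structural-witness subtrees: it builds $T'$ so that \emph{every} vertex $v$ (other than the root) is labelled by a sufficiently large $T$-antichain $A_v$, with the children of $v$ labelled by antichains $B$ lying $\trianglerighteq^T A_v$ (each element of $B$ extends some element of $A_v$) and mutually unionable at the moment of creation. This gives the global property that whenever $v_0, v_1 \in T'$ are incomparable, $A_{v_0} \cup A_{v_1}$ is a $T$-antichain; hence an infinite $T'$-antichain through $v$ directly yields an infinite $T$-antichain $\bigcup_i A_{v_i}$ extending $A_v$, without any ``spanning multiple labels'' case to exclude. If you want to pursue your $\tilde T_\sigma$-approach, you would need to either label the $\tilde T_\sigma$-vertices themselves (so that whatever $\ExtVer$ returns inside a witness-branch still decodes to an $f$-solution) or prove that your scaffolding confines every relevant infinite antichain to a single branch; as stated, the proposal has a genuine gap at exactly that point.
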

\begin{proof}
We only need to show that $\fop{\BStree} \leqW \ExtVer$. An instance of $\fop{\BStree}$ is a tree $T$ of infinite width together with a notion of ``sufficiently large finite antichain'' (more formally, the latter is a computable functional which, when given $T$ and an infinite $T$-antichain, must halt on some initial segment thereof). A solution is a sufficiently large finite antichain which is extendible to an infinite one. We shall compute a tree $T' \subseteq \baire$ of infinite width such that each vertex $v \in T'$ (other than the root) is labelled with a sufficiently large finite antichain $A_v$ in $T$, and if $v$ extends to an infinite $T'$-antichain, then $A_v$ extends to an infinite $T$-antichain. For clarity, we shall use Greek letters ($\sigma$, $\tau$) to denote the vertices of $T$, and Roman letters ($v$, $w$) to denote the vertices of $T'$.

The tree $T'$ is defined recursively as follows. Start by putting the root in $T'$. The children of the root will each be labelled with a sufficiently large finite antichain $A_i$ in $T$. Specifically, the $i$-th child of the root is defined recursively as follows: First search for $A_0$ and $A_1$ in $T$, disjoint and each sufficiently large, such that $A_0 \cup A_1$ forms an antichain.
Since $T$ has infinite width, this search is successful and we label the first child with $A_0$ and the second child with $A_1$. Then search for $A_2$ sufficiently large and disjoint from $A_0 \cup A_1$ such that $A_0 \cup A_1 \cup A_2$ forms an antichain. If found, we label the third child with $A_2$. Proceed to search for $A_3$, and so on. Note that $\sequence{A_i}{i}$ may not be infinite.
The reason why we start by searching for two antichains $A_0$ and $A_1$, and then continue by searching one extra antichain at a time (first $A_2$, then $A_3$, and so on) is to guarantee that if a node has a child, then it has at least two incomparable children. 
More generally, if $v \in T'$ is not the root, then we define the children of $v$ by searching as above among the sufficiently large antichains $B$ such that $B \trianglerighteq^T A_v$ (i.e., there is some $\sigma \in A_v$ such that $\sigma \prefix \tau$ for all $\tau \in B$).
Note that for certain $v$, this search may turn up empty. This completes the construction of $T'$, excepting a final modification to ensure that $T'$ is computable (uniformly in $T$): Each child of a vertex in $T'$ shall encode the stage at which its corresponding $T$-antichain was found.

We claim that $T'$ has infinite width. If any vertex has infinitely many children, we are done. Otherwise, we shall prove by induction that at each level of $T'$ (beyond the root), there is some vertex $v$ and some $\sigma \in A_v$ such that $[\sigma]_T$ has infinite width. This would imply that $v$ has at least two children, allowing us to conclude that $T'$ has infinite width. The inductive step proceeds as follows: Suppose $v \in T'$ and $\sigma \in A_v$ are such that $[\sigma]_T$ has infinite width. Say the children of $v$ are labelled by the antichains $A_0,A_1,\dots,A_k$. Then $A_0 \cup \dots \cup A_k$ is not extendible to an infinite antichain in $\bigcup_{\rho \in A_v} [\rho ]_T$ (else $v$ would have additional children), but the latter contains $[\sigma]_T$ and thus has infinite width. So there is some $i$ and some $\rho \in A_i$ such that $[\rho]_T$ has infinite width. The base case proceeds similarly; simply consider the children of the root in $T'$ and use the assumption that $T$ has infinite width.

Next, we claim that if $v_0,v_1 \in T'$ are incomparable, then any $\sigma_0 \in A_{v_0}$ and any $\sigma_1 \in A_{v_1}$ are $T$-incomparable. This would imply $A_{v_0}$ and $A_{v_1}$ are disjoint and $A_{v_0} \cup A_{v_1}$ is a $T$-antichain. To prove the claim, let $w \in T'$ denote the longest common ancestor of $v_0$ and $v_1$. Let $w_0 \prefix v_0$ and $w_1 \prefix v_1$ be children of $w$. By construction, $A_{w_0}$ and $A_{w_1}$ are disjoint and $A_{w_0} \cup A_{w_1}$ is a $T$-antichain. Also by construction, for $i =0,1$, there is some $\tau_i \in A_{w_i}$ such that $\tau_i \prefix \sigma_i$. Since $\tau_0$ and $\tau_1$ are $T$-incomparable, so are $\sigma_0$ and $\sigma_1$.

The above claim implies that if $v \in T'$ is extendible to an infinite $T'$-antichain $\sequence{v_i}{i}$, then $A_v$ is extendible to an infinite $T$-antichain, namely $\bigcup_i A_{v_i}$. Since every $A_v$ is sufficiently large, this yields a Weihrauch reduction from $\fop{\BStree}$ to $\ExtVer$.
\end{proof}

Similar to \thref{prop:bswtrupperbound}, we shall obtain an upper bound for $\fop{\BStree}$. Let $\bigsqcap_{k \geq 2} \pitacc{k}$ denote the following problem: Given a sequence $\sequence{p_k}{k \in \mathbb{N}}$ where each $p_k$ is an $\pitacc{k}$-instance, find $(k,i) \in \mathbb{N}^2$ such that $i \in \pitacc{k}(p_k)$. Observe that $\bigsqcap_{k \geq 2} \pitacc{k} \leqW \ACC_\ell$ for every $\ell$. The operation $\sequence{f_n}{n\in\mathbb{N}} \mapsto \bigsqcap_{n\in\mathbb{N}} f_n$ is a generalization of the binary meet to countably many problems. However, it is known that the Weihrauch degrees are not a $\aleph_0$-complete meet semilattice (\cite{HiguchiPauly13}, see also \cite{LMV24}).

\begin{proposition}
\thlabel{prop:bstreefirstorderupperbound}
$\ExtVer \leW \bigsqcap_{k \geq 2} \pitacc{k}$.
\end{proposition}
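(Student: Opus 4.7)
The main direction to focus on is $\ExtVer \leqW \bigsqcap_{k \geq 2} \pitacc{k}$, which by the universal property of the meet reduces to establishing $\ExtVer \leqW \pitacc{k}$ for every fixed $k \geq 2$. My plan is to mimic the argument used in \thref{prop:bswtrupperbound}, but since $\ExtVer$ is first-order we only need a single application of $\pitacc{k}$ instead of parallelizing.

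Given a tree $T \subseteq \baire$ of infinite width, I would first search for some antichain $v_0, \ldots, v_{k-1}$ of size $k$ in $T$: such an antichain exists and can be found computably because the width of $T$ being infinite is guaranteed by the input specification. I would then pass to $\pitacc{k}$ the set $A = \{i < k \st v_i \text{ extends to an infinite antichain in } T\}$. Observe that extendibility of $v_i$ is equivalent to the statement ``for every $n$ there is an antichain of size $n$ in $T$ all of whose elements are $\prefix$-incomparable with $v_i$'', which is uniformly $\boldsymbol{\Pi}^0_2$ in $T$ and $v_i$.

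The key combinatorial lemma to be verified is that $|A| \geq k-1$. Suppose towards contradiction that two distinct $v_i,v_j$ from our antichain are both non-extendible. Then for each of them, the set of vertices of $T$ which are $\prefix$-incomparable with it has finite width. Consider any infinite antichain $B$ in $T$: cofinitely many elements of $B$ must be comparable with $v_i$, and since an antichain contains at most one element below or equal to $v_i$, cofinitely many elements of $B$ lie strictly above $v_i$; similarly for $v_j$. But any vertex above both $v_i$ and $v_j$ witnesses that $v_i,v_j$ are $\prefix$-comparable, contradicting the choice of the antichain. Hence at most one index is missing from $A$ and this is a legal $\pitacc{k}$-instance; any answer $i \in A$ yields the extendible vertex $v_i$. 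Running this construction in parallel over all $k \geq 2$ produces the family of inputs required to reduce to $\bigsqcap_{k \geq 2} \pitacc{k}$, and the tagged answer $(k,i)$ returned by the infimum lets us select the correct $v_i$ from the corresponding $k$-antichain.

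The harder part is the strict inequality, i.e., showing $\bigsqcap_{k \geq 2} \pitacc{k} \not\leqW \ExtVer$. I expect this to be the main obstacle, since a priori $\ExtVer$ sits in the interval $[\pitaccN, \pitacc{k+1}]$ for every $k$ (by \thref{thm:fop_bstree} and the upper bound above), and nothing we have established so far rules out equality with the meet. The natural line of attack is an adversary argument in the style of \thref{lemma:seperationwork}: assume a computable reduction witnessed by $\Phi,\Psi$, and exhibit inputs $(A_k)_{k \geq 2}$ on which no extendible vertex of $\Phi((A_k)_k)$ can be mapped by $\Psi$ to a valid tagged solution. Building the $\boldsymbol{\Pi}^0_2$-sets $A_k$ stage by stage while monitoring $\Psi$'s behavior on the extendible vertices of the tree under construction should allow the adversary to wait until $\Psi$ commits to some pair $(k_0,i_0)$ and then enumerate $i_0$ out of $A_{k_0}$, contradicting the reduction. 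Packaging this diagonalization uniformly across all $k$ simultaneously is what I expect to be delicate.
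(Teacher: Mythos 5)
Your argument for the reduction $\ExtVer \leqW \bigsqcap_{k \geq 2} \pitacc{k}$ is correct and follows the same route as the paper: for each $k$ find a $k$-antichain, use that extendibility is $\Pi^0_2$ to build a $\pitacc{k}$-instance, and observe that at most one vertex of an antichain can fail to be extendible. You actually supply a proof of this last combinatorial fact (the ``cofinitely many of $B$ lie strictly above $v_i$'' argument), which the paper leaves implicit; that write-up is correct.

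However, for the strictness $\bigsqcap_{k \geq 2} \pitacc{k} \not\leqW \ExtVer$, you only sketch an approach and explicitly admit you do not know how to carry it out. This is a genuine gap. Moreover the sketch you give has a specific hole that you do not flag: if the adversary simply waits until $\Psi$ commits to $(k_0,i_0)$ on \emph{some} vertex $v$ and then ejects $i_0$ from $A_{k_0}$, nothing forces $v$ to be a valid $\ExtVer$-solution for the final tree, so no contradiction ensues. The missing ingredient is precisely the combinatorial lemma you yourself proved in the easy direction: among any two $\prefix$-incomparable vertices, at least one is extendible, hence at least one is a legitimate $\ExtVer$-solution no matter how the input is completed. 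The paper exploits this by running $\Phi$ on the ``neutral'' input (where every $j<k$ is a correct $\pitacc{k}$-answer), waiting for two incomparable vertices $v_0,v_1$ to appear in the tree and for $\Psi$ to commit to tagged outputs on both, and then case-splitting: if $\Psi(v_0)$ and $\Psi(v_1)$ give the same tagged answer $(k,j)$, eject $j$ from $A_k$; otherwise, by pigeonhole one can find incomparable $v_0,v_1$ with answers $(k_0,j_0)$ and $(k_1,j_1)$ for distinct $k_0 \neq k_1$, and one ejects $j_0$ from $A_{k_0}$ and $j_1$ from $A_{k_1}$. In either case one of $v_0,v_1$ is extendible and $\Psi$ on it now returns a wrong answer, a contradiction. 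This avoids entirely the ``uniform diagonalization across all $k$'' difficulty you were worried about: only finitely many $A_k$ ever need to be modified, and the neutral input plus finite-prefix determination of $\Phi,\Psi$ handle the continuity side. You should complete the proof along these lines rather than attempting a stagewise $\Pi^0_2$-construction.
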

\begin{proof}
First, we prove the reduction. Given a tree $T$ of infinite width, search for antichains of each finite size. Within each antichain, at most one vertex is not extendible to an infinite antichain in $T$. Since extendibility is a $\Pi^0_2$ property (relative to $T$), for each $k$ we can build an input for $\pitacc{k}$ to identify one vertex in the antichain of size $k$ which is extendible. Then a solution for any of the $\pitacc{k}$-instances yields an $\ExtVer$-solution to $T$.

To show that the reduction is strict, assume towards a contradiction that $\bigsqcap_{k \geq 2} \pitacc{k} \leqW \ExtVer$. Apply the reduction to the ``neutral'' input to $\bigsqcap_{k \geq 2} \pitacc{k}$, i.e., where every $j < k$ is a solution. If the resulting tree $T$ contains incomparable vertices $v_0$ and $v_1$ which yield the same output $j < k$ under the reduction, we may fix a finite prefix which determines this fact, then render $j$ wrong in the $\pitacc{k}$-instance by enumerating it. This leads to a contradiction because either $v_0$ or $v_1$ must be a valid $\ExtVer$-solution. On the other hand, if incomparable vertices in $T$ always yield different outputs under the reduction, by the pigeonhole principle, there must be incomparable vertices $v_0$ and $v_1$ which yield outputs for different $k$. Say $v_i$ determines the output $j_i < k_i$. Once again this fact is determined by a finite prefix of the neutral input. We then render $j_i$ wrong in the $\pitacc{k_i}$-instance for $i=0,1$ after said finite prefix. Since either $v_0$ or $v_1$ must be an $\ExtVer$-solution, we reach a contradiction.
\end{proof}

It turns out that $\ExtVer$ is sandwiched between $\pitaccN$ and $\bigsqcap_{k \geq 2} \pitacc{k}$.

\begin{proposition}
\thlabel{prop:pi02accnlowerbound}
$\pitaccN \leqW \ExtVer$.
\begin{proof}
Given some $p \in \Baire$, we shall build some $T \subseteq \cantor$ of infinite width such that from any vertex in $T$ which is extendible to an infinite antichain, we can compute some $n$ such that $p$ does not end in $n^\omega$ (i.e., if $\lim_i p_i$ exists, it is not $n$). We can assume without loss of generality that $p_i \leq i$. The tree is constructed level by level. After each level is constructed, each newly added vertex is labelled by the least natural number which has yet to be used as a label, in order from left to right. When building level $s$, first consider the leftmost leaf which extends the vertex with label $p_s$. Such a vertex must exist because $T$ already has more than $s \geq p_s$ vertices. Add both children of said leaf to $T$. Then, for every other leaf in $T$ (excluding the children just added), add its left child to $T$.

As we have a bifurcation on every level and no dead-ends, $T$ has infinite width. Moreover, if $p$ ends in $n^\omega$, then the subtree of $T$ not extending the vertex with label $n$ has finite width, i.e.~the vertex with label $n$ is not extendible.
\end{proof}
\end{proposition}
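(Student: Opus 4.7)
The plan is to use the characterization of $\pitaccN$ given by \thref{thm:pi02acck} and view an instance as a sequence $p\in\Baire$ whose task is to output some $n$ that is not the eventual value of $p$ (if such a value exists). Given such a $p$, I would construct a tree $T\subseteq\cantor$ of infinite width whose vertices are assigned distinct natural-number labels by breadth-first traversal, in such a way that the label of any vertex extendible to an infinite antichain in $T$ is automatically a valid $\pitaccN$-answer. The tree is built level by level, using $p$ as a pointer: at stage $s$, locate the already-present vertex $v_{p_s}$ carrying label $p_s$, select the leftmost leaf above $v_{p_s}$, and add \emph{both} of its children; every other current leaf is extended by exactly one (say, left) child. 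A routine preprocessing of $p$ (for instance, replacing $p_s$ by $\min(p_s,s)$) ensures that the vertex $v_{p_s}$ exists at the moment it is needed while preserving the limit of $p$ when one exists.

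Two properties then need to be checked. First, $T$ has infinite width, since every level contributes exactly one new bifurcation and no branch ever dies. Second, and this is the heart of the argument: if $\lim_i p_i = n$, then the vertex $v_n$ is \emph{not} extendible to an infinite antichain in $T$. The decisive observation is that from some stage $s_0$ onwards, every bifurcation is placed strictly above $v_n$; the complement $T\setminus[v_n]$ therefore accrues only single-child extensions after stage $s_0$, so it is the union of the finitely many bifurcating vertices created before stage $s_0$ together with chains extending them, and hence has finite width. Any infinite antichain containing $v_n$ would have to consist of $v_n$ together with infinitely many pairwise incomparable vertices of $T\setminus[v_n]$, which is impossible.

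The reduction itself then simply reads off the BFS label of whatever vertex $\ExtVer$ returns on input $T$: if $\lim_i p_i$ exists and equals $n$, the preceding paragraph forces the returned vertex to differ from $v_n$, so its label differs from $n$; if no such limit exists, every natural number is a legal $\pitaccN$-answer on $p$ and there is nothing to check. I do not anticipate a real obstacle; the one point that deserves care is verifying that vertices outside $[v_n]$ but incomparable with $v_n$ cannot conspire to produce an infinite antichain containing $v_n$, and this is exactly what the finite-width conclusion for $T\setminus[v_n]$ rules out.
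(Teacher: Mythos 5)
Your proposal is correct and uses essentially the same construction as the paper: level-by-level construction of a binary tree with BFS labels, where at stage $s$ the leftmost leaf above the vertex labelled $p_s$ is bifurcated while all other leaves get a single child, and the answer is read off as the BFS label of the $\ExtVer$ output. Your preprocessing of $p$ (replacing $p_s$ by $\min(p_s,s)$) is the paper's ``assume WLOG $p_i \leq i$,'' and your finite-width argument for $T\setminus[v_n]$ when $\lim_i p_i = n$ matches the paper's, just spelled out in slightly more detail.
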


The following proposition is stated for $\pitaccN$ but can be easily generalized to any pointclass $\boldsymbol{\Gamma}$.

\begin{proposition}
$\ExtVer \nleqW \pitaccN$.
\begin{proof}
Since $\pitaccN \nleqW \mflim$ (\thref{prop:pitacc_not_below_lim}), it suffices (by \thref{prop:pi02accnlowerbound}) to prove that if $\ExtVer \leqW \pitaccN$, then $\ExtVer \leqW \mflim$. We will in fact reduce $\ExtVer$ to $\lpo$. 

Observe that the forward functional of a putative reduction $\operatorname{ExtVer} \leqW \pitaccN$ constitutes a computable procedure for producing a sequence of vertices $(v_i)_{i \in \mathbb{N}}$ in a given tree of infinite width such that there is at most one vertex $v_i$ which is not extendible (to an infinite antichain). To reduce $\ExtVer$ to $\lpo$, ask $\lpo$ if there are $j, k$ with $v_j \pprefix v_k$. If so, then $v_k$ must be extendible because (1) at most one $v_i$ is not extendible; (2) if $v_j$ is extendible, so is $v_k$. On the other hand, if $(v_i)_{i \in \mathbb{N}}$ is already an infinite antichain, then $v_0$ is extendible.
\end{proof}
\end{proposition}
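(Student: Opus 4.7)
The plan is to derive a contradiction from the assumption $\ExtVer \leqW \pitaccN$ by showing that this forces $\ExtVer \leqW \lpo$; combined with \thref{prop:pi02accnlowerbound} this would yield $\pitaccN \leqW \lpo$, contradicting the fact (which follows e.g.\ from $\pitaccN \nleqW \mflim$ together with $\lpo \leqW \mflim$) that $\pitaccN \nleqW \lpo$.

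Suppose $\ExtVer \leqW \pitaccN$ via forward and backward functionals $\Phi, \Psi$. Given an $\ExtVer$-instance $T$ (a tree of infinite width), $\Phi(T)$ produces a $\pitaccN$-instance with at most one forbidden answer. Applying the backward map at every $i \in \mathbb{N}$ yields a uniformly computable sequence $(v_i)_{i \in \mathbb{N}}$ of vertices of $T$ such that $v_i$ is an $\ExtVer$-solution (hence extendible in $T$) for every valid $\pitaccN$-answer $i$. In other words, at most one of the $v_i$'s may fail to be extendible.

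My $\lpo$-reduction asks the single $\Sigma^0_1$-question ``do there exist indices $j \neq k$ with $v_j$ and $v_k$ $\prefix$-comparable in $T$?'' If the answer is yes, I classically search for such a pair and output the $\prefix$-larger one (or either one if they coincide). If the answer is no, I output $v_0$.

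The main technical point needed for correctness is the following lemma, which I expect to be the hardest piece: extendibility transfers upward along $\prefix$-extensions, i.e., if $u \pprefix w$ in $T$ and $u$ lies in an infinite antichain $A \subseteq T$, then so does $w$. The verification is short: every $x \in A \setminus \{u\}$ is incomparable with $u$, and hence also incomparable with $w$, because any prefix of $w$ is comparable with $u$ (as $u \prefix w$) and any proper extension of $w$ is an extension of $u$. Thus $(A \setminus \{u\}) \cup \{w\}$ is an infinite antichain through $w$. Given this lemma, the yes-case is correct: at most one of $v_j, v_k$ is the unique non-extendible vertex, and the lemma ensures that if the smaller is extendible then so is the larger (and if $v_j = v_k$, the common value is produced at two distinct indices, so it cannot be the unique bad one). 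In the no-case, the $v_i$'s are pairwise distinct and pairwise incomparable, so $\{v_i : i \in \mathbb{N}\}$ is itself an infinite antichain of $T$, making $v_0$ extendible. This completes the reduction $\ExtVer \leqW \lpo$ and hence the contradiction.
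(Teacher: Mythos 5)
Your proof follows the paper's approach: extract from the backward reduction witness a sequence $(v_i)_{i\in\mathbb{N}}$ of vertices with at most one non-extendible, then ask $\lpo$ whether two of them are $\prefix$-comparable, and output accordingly. The lemma you state and verify (extendibility transfers upward along $\prefix$) is precisely the fact the paper invokes as point (2) without proof. The one substantive difference is the form of the $\lpo$-question: the paper asks whether there are $j,k$ with $v_j \pprefix v_k$ (strict prefix) and, in the negative case, declares that $(v_i)$ is an infinite antichain; but the $v_i$ may repeat, so the paper's question can come back ``no'' while $\{v_i : i \in \mathbb{N}\}$ is finite and $v_0$ happens to be the unique non-extendible vertex, making the output wrong. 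Your question --- do there exist $j \neq k$ with $v_j$ and $v_k$ $\prefix$-comparable, where comparability includes equality? --- forces the $v_i$ to be pairwise distinct and pairwise incomparable in the negative case (so that $\{v_i\}$ really is an infinite antichain), and in the positive case you correctly dispatch the subcase $v_j = v_k$ by observing that a vertex produced at two distinct indices must be extendible. So this is essentially the same proof, with a small gap in the paper's phrasing closed.
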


To summarize:

\begin{corollary}
\thlabel{corr:extver}
$\pitaccN \leW \ExtVer \equivW \fop{\BStree} \leW \bigsqcap \limits_{k \geq 2} \pitacc{k}$.
\end{corollary}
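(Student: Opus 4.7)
The plan is to assemble the four assertions of the corollary directly from results in the immediately preceding sequence of propositions, since every individual link has already been established and the corollary is essentially a collation. The chain has three links: a strict reduction $\pitaccN \leW \ExtVer$, an equivalence $\ExtVer \equivW \fop{\BStree}$, and a strict reduction $\ExtVer \leW \bigsqcap_{k \geq 2} \pitacc{k}$.

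For the middle equivalence $\ExtVer \equivW \fop{\BStree}$, I would cite \thref{thm:fop_bstree} outright: the direction $\ExtVer \leqW \fop{\BStree}$ is immediate because a single extendible vertex counts as a first-order solution produced by $\BStree$, and the converse is the tree-relabelling construction building $T'$ whose vertices carry ``sufficiently large antichain'' labels. For the left strict reduction, the non-strict half $\pitaccN \leqW \ExtVer$ is \thref{prop:pi02accnlowerbound}, where one builds a binary tree level by level so that any tail-constant value $n$ of the input renders the vertex with label $n$ non-extendible. Strictness is the unnamed proposition sitting directly above the corollary: a reduction $\ExtVer \leqW \pitaccN$ would, via its outer functional, supply a computable sequence of vertices of which at most one is non-extendible, and then $\lpo$ could test whether some pair is $\prefix$-comparable (the longer vertex in any such pair is then guaranteed to be extendible), yielding $\ExtVer \leqW \lpo$ and contradicting $\pitaccN \nleqW \lpo$.

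For the right strict reduction, both halves are \thref{prop:bstreefirstorderupperbound}: the reduction itself enumerates antichains of each finite size and, for size $k$, isolates the single potentially non-extendible vertex as a $\pitacc{k}$-instance (extendibility being $\boldsymbol{\Pi}^0_2$); strictness is a pigeonhole-plus-diagonalization argument applied to the ``neutral'' input, producing incomparable vertices whose reduction-determined answers can be invalidated. There is no real obstacle here, since everything has been done; the only care needed is to verify that the strictness halves at each end are genuinely covered by the cited propositions, so that the chain as written in the corollary is an accurate shorthand for four separate facts rather than a single composite claim.
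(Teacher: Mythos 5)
Your proposal is correct and coincides exactly with the paper's approach: the corollary is purely a collation, with the equivalence from \thref{thm:fop_bstree}, the left non-strict reduction and its strictness from \thref{prop:pi02accnlowerbound} and the unnamed proposition preceding the corollary, and both halves of the right strict reduction from \thref{prop:bstreefirstorderupperbound}. The paper gives no separate proof, and your account of what each cited result contributes is accurate.
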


\begin{corollary}
	$\ACC_\mathbb{N} \times \ACC_\mathbb{N}\not\weireducible \BStree$. In particular, $\fop{\BStree} \leW \fop{\BStree} \times \fop{\BStree}$ and $\BStree \leW \BStree \times \BStree$.
\begin{proof}
	By \thref{prop:bstreefirstorderupperbound}, $\ACC_\mathbb{N} \times \ACC_\mathbb{N}\weireducible \BStree$ implies $\ACC_\mathbb{N} \times \ACC_\mathbb{N} \leqW \pitacc{2} \weiequiv \RT{1}{2}$, which is easily seen to be false. The rest of the statement follows easily from the fact that $\BStree \times \BStree\weireducible \BStree$ implies $\fop{\BStree} \times \fop{\BStree}\weireducible \fop{\BStree}$ (see e.g.\ \cite[Proposition 4.1]{soldavalenti}), which, in turn, implies $\ACC_\mathbb{N} \times \ACC_\mathbb{N} \weireducible \BStree$.
\end{proof}
\end{corollary}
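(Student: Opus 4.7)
The plan is to chain the upper bound of \thref{prop:bstreefirstorderupperbound} with a direct adversary argument against $\pitacc{2}$. Since $\ACC_\mathbb{N}\times\ACC_\mathbb{N}$ is Weihrauch equivalent to a first-order problem (compose the codomain $\mathbb{N}\times\mathbb{N}$ with a computable pairing into $\mathbb{N}$), the assumption $\ACC_\mathbb{N}\times\ACC_\mathbb{N}\weireducible\BStree$ yields $\ACC_\mathbb{N}\times\ACC_\mathbb{N}\weireducible\fop{\BStree}$, whence
\[ \ACC_\mathbb{N}\times\ACC_\mathbb{N}\weireducible\bigsqcap_{k\ge 2}\pitacc{k}\weireducible\pitacc{2}\weiequiv\RT{1}{2} \]
by \thref{prop:bstreefirstorderupperbound}. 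So everything reduces to refuting $\ACC_\mathbb{N}\times\ACC_\mathbb{N}\weireducible\pitacc{2}$.

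For the refutation I would run a short diagonalization. Suppose witnesses $\Phi,\Psi$ exist. Consider the ``neutral'' input naming $(A_0,A_1)=(\emptyset,\emptyset)$, for which every pair in $\mathbb{N}^2$ is a valid $\ACC_\mathbb{N}\times\ACC_\mathbb{N}$-solution. For each potential $\pitacc{2}$-answer $i\in\{0,1\}$, the backward functional $\Psi$ produces a name of some pair $(n^i_0,n^i_1)\in\mathbb{N}^2$; by continuity each pair is committed after $\Psi$ reads only a finite prefix of its input. Let $m$ exceed the lengths of both such prefixes, set $a:=n^0_0$ and $b:=n^1_1$, and extend the neutral input past position $m$ so as to enumerate $a$ into $A_0$ and $b$ into $A_1$. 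The extended input represents the valid instance $(\{a\},\{b\})$. Since $\Psi$ had already committed by stage $m$, on the extended input it still returns $(n^i_0,n^i_1)$ when given answer $i$. But $(n^0_0,n^0_1)$ has first coordinate $a\in A_0$ and $(n^1_0,n^1_1)$ has second coordinate $b\in A_1$, so neither is a valid solution. As $\Phi$ on a valid input must still produce a nonempty $\pitacc{2}$-instance, whichever $i$ is returned $\Psi$'s output is wrong---contradiction.

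The two ``in particular'' statements follow easily. From \thref{corr:extver} together with the obvious $\ACC_\mathbb{N}\weireducible\pitaccN$ we have $\ACC_\mathbb{N}\weireducible\fop{\BStree}$, so $\ACC_\mathbb{N}\times\ACC_\mathbb{N}\weireducible\fop{\BStree}\times\fop{\BStree}$; if equality $\fop{\BStree}\weiequiv\fop{\BStree}\times\fop{\BStree}$ held, chaining through $\BStree$ would contradict the main non-reducibility. For $\BStree\leW\BStree\times\BStree$ one invokes the general fact from \cite[Prop.\ 4.1]{soldavalenti} that $\BStree\times\BStree\weireducible\BStree$ forces $\fop{\BStree}\times\fop{\BStree}\weireducible\fop{\BStree}$, reducing it to the first ``in particular'' statement. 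The delicate point throughout is the diagonalization step: one has to notice that the $\boldsymbol{\Pi}^0_2$-flexibility available to $\Phi$ is powerless once both of the only two $\pitacc{2}$-answers have been simultaneously foiled, because the instance is forbidden to shrink to the empty set.
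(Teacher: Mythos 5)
Your high-level route matches the paper's exactly: pass through $\fop{\BStree}$ via \thref{prop:bstreefirstorderupperbound} to reduce the question to $\ACC_\mathbb{N}\times\ACC_\mathbb{N}\nleqW\pitacc{2}\weiequiv\RT{1}{2}$, and derive the two ``in particular'' statements from $\ACC_\mathbb{N}\weireducible\fop{\BStree}$ together with \cite[Prop.\ 4.1]{soldavalenti}. The paper simply asserts that $\ACC_\mathbb{N}\times\ACC_\mathbb{N}\leqW\RT{1}{2}$ is ``easily seen to be false''; you fill this in with a diagonalization, which is a reasonable thing to want to do.

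However, your diagonalization has a genuine gap. You run $\Psi$ on the neutral input with each of the two potential $\pitacc{2}$-answers $i\in\{0,1\}$ and assert ``by continuity each pair is committed after $\Psi$ reads only a finite prefix.'' This presupposes that $\Psi$ halts on \emph{both} answers. But $\Phi(\text{neutral})$ may be a $\pitacc{2}$-instance with a unique valid answer (e.g.\ a sequence converging to $1$, so that only $0$ is correct), in which case $\Psi(\text{neutral},1)$ is not required to halt, and indeed need not. Then your witness $m$, and hence your choice of $a$ and $b$, is undefined. This is not a removable convenience: the situation really can occur, and in fact must be dealt with, since $\ACC_\mathbb{N}\leqW\RT{1}{2}$ holds and is realized by reductions of exactly this unbalanced form. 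Your closing remark that the $\Pi^0_2$-instance ``is forbidden to shrink to the empty set'' is true but misses the point: it is allowed to shrink to a singleton, which is precisely what lets $\Psi$ dodge your trap.

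The repair is a two-stage argument. Run $\Psi(\text{neutral},0)$ and $\Psi(\text{neutral},1)$ in parallel; at least one must halt, say $\Psi(\cdot,0)$ at stage $m_0$ with output $(a_0,b_0)$. If the other also halts at some $m_1$ with output $(a_1,b_1)$, proceed as you did, enumerating $a_0$ into $A_0$ and $b_1$ into $A_1$ past $\max(m_0,m_1)$. Otherwise, enumerate $a_0$ into $A_0$ past $m_0$; now $0$ cannot be a valid $\pitacc{2}$-answer on the new input, so $1$ must be, and $\Psi(\text{new},1)$ must halt, say at $m_1'$ with output $(a_1',b_1')$. Enumerate $b_1'$ into $A_1$ past $m_1'$. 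Since $\Psi(\cdot,0)$ already committed at $m_0<m_1'$, both answers are now foiled on a legitimate input, giving the contradiction. The budget works out exactly: you have one enumeration per coordinate of $\ACC_\mathbb{N}\times\ACC_\mathbb{N}$ and two answers to kill. Without this case split, your proof as written does not go through.
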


We end this section by drawing attention to the following open question:
\begin{question}
Does $\BStree \leqW \DS$?
\end{question}

All lower bounds for $\BStree$ known to us are also below $\DS$; this in particular applies to the finitary, deterministic, and first-order parts of $\BStree$. On the other hand, the best upper bound for $\BStree$ we have is $\parallelization{\pitacc{k}}$ (\thref{prop:bswtrupperbound}), which in turn does not reduce to $\DS$ (or even $\BS$) by \thref{cor:ACC_DS_lim}. One natural attempt to reduce $\BStree$ to $\DS$ is blocked by the following observation:

\begin{proposition}
	There is no computable procedure that receives as input any tree $T \subseteq \cantor$ and returns a linear extension $(T,\prec)$ of the prefix relation on $T$ such that whenever $T$ has infinite width, then $\prec$ is ill-founded.
\end{proposition}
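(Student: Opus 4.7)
The plan is to assume toward contradiction that such a computable procedure $\Phi$ exists, and construct—by an adversarial argument against $\Phi$—a computable tree $T \subseteq \cantor$ of infinite width on which $\Phi(T)$ is well-founded, violating the specification.

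I would aim for a caterpillar $T = \{0^n \st n \in \mathbb{N}\} \cup \{0^{n_i}1 \st i \in \mathbb{N}\}$ with $(n_i)_i$ strictly increasing, to be chosen adversarially. The spikes $0^{n_i}1$ form an infinite antichain, so $T$ automatically has infinite width. Two structural observations about any linear extension $\prec$ of the prefix order on such $T$ simplify the goal: (i) the stem is $\prec$-ascending ($0^0 \prec 0^1 \prec \cdots$), so at most one stem vertex lies on any $\prec$-descending chain; and (ii) any spike $0^{n_i}1$ with $n_i \geq m$ already satisfies $0^{n_i}1 \succ 0^{n_i} \succeq 0^m$ by prefix alone, so only finitely many spikes can lie $\prec$-below any given stem vertex $0^m$. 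Hence an infinite $\prec$-descending chain in $T$ must, after thinning, consist entirely of spikes, and it suffices to arrange $(n_i)_i$ so that $(0^{n_i}1)_i$ contains no infinite $\prec$-descending subsequence.

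The sequence $(n_i)_i$ would be built in stages, simultaneously committing longer and longer prefixes of the characteristic function of $T$. At stage $i$, having fixed $n_0 < \cdots < n_{i-1}$ and committed $T \cap \{w \in \cantor \st |w| \leq D_i\}$ for some $D_i > n_{i-1}$ (with the obvious stem-and-spike pattern), I would search computably for some $n_i > D_i$ and a new depth $D_{i+1} > n_i$ such that, on the extended commitment, $\Phi$ has stably decided each comparison $0^{n_j}1$ vs.\ $0^{n_i}1$ for $j<i$, and all these decisions place $0^{n_j}1 \prec 0^{n_i}1$. If the search succeeds at every stage, the resulting $\prec$-restriction to spikes is $\omega$-ascending; combined with the $\omega$-ascending stem (and observation (ii)), a standard merging argument shows the whole $\Phi(T)$ is well-founded, which is the desired contradiction.

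The main obstacle is proving that this search always succeeds. The bad case is that at some stage $i$, every sufficiently large candidate $n$ and every coherent extension of the commitment eventually forces $\Phi$ to place $0^n 1 \prec 0^{n_j}1$ for some $j<i$. To convert this systematic failure into a direct contradiction, I would invoke the continuity modulus of $\Phi$: the decision $0^n1 \prec 0^{n_j}1$ depends on only finitely many bits, so by a diagonal subsearch one can extract a sparse infinite $B \subseteq \mathbb{N}$ whose individual moduli do not interfere, ensuring that on the tree $T^* = \{0^n\} \cup \{0^{n_j}1 \st j<i\} \cup \{0^n 1 \st n \in B\}$ the $\Phi$-comparison of each new spike with each old spike agrees with its single-new-spike value. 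By pigeonhole, infinitely many new spikes are $\prec$-dominated by a common old spike $0^{n_{j^*}}1$ in $\Phi(T^*)$. Since $T^*$ has infinite width, $\Phi(T^*)$ must be ill-founded; tracing where the descending chain can live—using the structural observations and the fact that consistent merges of well-founded pieces remain well-founded—one is forced to conclude that the restriction of $\Phi(T^*)$ to the new spikes below $0^{n_{j^*}}1$ itself harbours an infinite $\prec$-descending chain. Iterating this extraction produces a defeat of $\Phi$ on a simpler sub-instance, and a careful König-style compactness argument turns the iteration into the required contradiction. The technical heart of the proof is controlling the moduli during this extraction so that the pigeonhole and recursion fit together coherently.
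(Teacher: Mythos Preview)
Your overall plan is natural, and the ``search succeeds'' branch is fine (modulo the minor slip in (i): a $\prec$-descending chain can contain several stem vertices, but only finitely many, since the stem has $\prec$-type $\omega$). The difficulty is entirely in the ``bad case'', and there your argument has two genuine gaps.

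\textbf{The modulus argument does not work.} For any $b\in B$, the use $D(b)$ of $\Phi$'s decision on the single-new-spike caterpillar $T_b$ must exceed $b$, since $\Phi$ needs to see the spike $0^b1$. Hence every earlier element $b'<b$ of $B$ contributes an extra spike $0^{b'}1$ at depth $b'+1\le b<D(b)$, so $T^*$ and $T_b$ disagree \emph{inside} the modulus. No amount of sparsifying $B$ avoids this; the single-spike decisions simply do not transfer to $T^*$. What you can salvage is the decision for the \emph{first} new spike $b_0$ (by pushing the rest of $B$ past $D(b_0)$), but not for the later ones.

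\textbf{Even granting the claim, no contradiction appears.} Suppose it were true that on $T^*$ infinitely many new spikes lie $\prec$-below a fixed old spike $0^{n_{j^*}}1$. Then $T^*$ has infinite width and $\Phi(T^*)$ is ill-founded, with a descending chain among those new spikes. But that is exactly the behaviour $\Phi$ is \emph{supposed} to exhibit on $T^*$; it contradicts nothing. You then write ``iterating this extraction produces a defeat of $\Phi$ on a simpler sub-instance''---but the sub-instance is again an infinite-width caterpillar on which $\Phi$ correctly produces an ill-founded extension, so the recursion never reaches anything false. Concretely, imagine $\Phi$ behaving on caterpillars by placing the stem as $\omega$ at the bottom and the spikes in reverse order of height above it (this is a valid linear extension and is ill-founded on every infinite-width caterpillar). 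Against this behaviour your search fails at stage $1$, and every $T^*$ you build is again a caterpillar on which $\Phi$ behaves correctly---your proposal never probes $\Phi$ outside caterpillars, so it cannot exploit the global hypothesis.

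\textbf{How the paper avoids this.} The paper's construction is adaptive rather than fixing the shape in advance. At each stage it asks $\Phi$ for the $\prec$-maximum $w_n$ of the current antichain of leaves and places the next split \emph{above $w_n$}. This guarantees automatically that $(w_n)_n$ is $\prec$-increasing and cofinal in the final tree, and a short argument then shows every $w_n$ lies in the $\prec$-well-founded part. There is no bad case to analyse: the very fact that $\Phi$ must eventually compare the current leaves is all that is used. The key idea you are missing is to let $\Phi$'s own choices dictate where the tree branches, instead of committing to a caterpillar.
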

\begin{proof}
	For the sake of a contradiction, assume there exists such a procedure. We shall define a pruned binary tree $T$ of infinite width on which the procedure produces a well-ordering. At each stage, we define one new level of $T$ as follows. Start by putting the root, $0$, and $1$ into $T$. Wait for the procedure to decide whether $0 \prec 1$ or $1 \prec 0$. While waiting, we extend $0$ and $1$ by $0$s. Eventually, say at stage $s_0$, the procedure decides $w_0 \in \{0,1\}$ is the $\prec$-max of $0$ and $1$. We then add a split above $w_0$, i.e., put $w_0 \concat 0^{s_0}0$ and $w_0\concat 0^{s_0}1$ into $T$. In general, if we added a split into $T$ at stage $s_n$, let $A_n$ denote the $T$-antichain of vertices which are currently $T$-maximal. We wait for the procedure to decide $\max_\prec A_n$. While waiting, we extend each $T$-maximal element by $0$s. Once $w_n$ has been decided as $\max_\prec A_n$, we add a split above $w_n\concat 0^k$ where $k$ is largest such that $w_n\concat 0^k \in T$. We end the stage by extending all other $T$-maximal elements by $0$.
	
	The construction adds infinitely many splits, so $T$ has infinite width. To show that $(T,\prec)$ is a well-ordering, first observe that $(w_n)_n$ forms an increasing cofinal sequence in $(T,\prec)$. It therefore suffices to show that each $w_n$ lies in the $\prec$-well-founded part $W$. Suppose towards a contradiction that there is some infinite $\prec$-descending sequence $S$ which begins with $w_n$. There must be some $a \in A_n$, different from $w_n$, such that infinitely many elements of $S$ are $T$-above $a$. This is because if $x\prec b$ for every $b\in A_n$, then $x$ is a prefix of some of them. Then the subtree of $T$ above $a$ must contain splits; furthermore, some $s \in S$ lies $T$-above such a split (otherwise the subsequence of $S$ which is $T$-above $a$ forms a $T$-chain, which must have $\prec$-order type $\omega$). However, by construction of $T$, such a split occurs above some $w_m$ where $m > n$, so we have $s \succ w_m \succ w_n$. This contradicts the fact that $S$ begins with $w_n$.
\end{proof}

\section{The quotient relative to the parallel product}
\label{sec:quotient}

\thref{theo:dsproducts} implies that
\[ \max_{\leqW} \{h \st h\times \parallelization{\ACC_\mathbb{N}} \leqW \BS\} \]
exists and is equal to (the Weihrauch degree of) $\mflim$.
The existence of such a maximum (with different problems in place of $\parallelization{\ACC_\mathbb{N}}$ and $\BS$) was discussed in \cite[Theorem 3.7]{dghpp20},
which prompted them to ask about the extent to which the ``parallel quotient'' operator
\[ f/g \equivW \max_{\leqW} \{h \st h \times g \leqW f\} \]
is defined.
While \cite[Remark 3.11]{dghpp20} says ``We have no reason to think this operator is total [..]'',
we address their question by showing that the operator in question is in fact total.

\begin{definition}
\thlabel{def:quotient}
Given $f,g\pmfunction{\Baire}{\Baire}$ with $g$ different from $0$, we define their \emph{parallel quotient} $f/g\pmfunction{\mathbb{N}\times\mathbb{N}\times\Baire}{\Baire}$ as follows:
\begin{align*}
\dom(f/g) := \{(e,i,p)\st (\forall q \in \dom(g))[ &\Phi_e(p,q) \in \dom(f) \text{ and } \\
&(\forall r \in f(\Phi_e(p,q)))(\Phi_i(p,q,r) \in g(q))]\}
\end{align*}
\[ f/g(e,i,p) := \{\pairing{q,r}\st q \in \dom(g) \ \land \ r \in f(\Phi_e(p,q))\}\]

\end{definition}

\begin{proposition}
	\thlabel{thm:quotient_total}
$f/g \equivW \max_{\leqW} \{h\st h \times g \leqW f\}$.
\end{proposition}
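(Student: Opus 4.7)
The plan is to establish the equivalence in two directions: verify that $f/g$ itself belongs to the set $\{h : h \times g \leqW f\}$, and verify that any $h$ in this set satisfies $h \leqW f/g$.

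For the first direction, I would show $f/g \times g \leqW f$ directly from \thref{def:quotient}. Given an input $((e,i,p), q)$ with $(e,i,p) \in \dom(f/g)$ and $q \in \dom(g)$, the forward reduction outputs the $f$-instance $\Phi_e(p,q)$, which lies in $\dom(f)$ by the first clause of $\dom(f/g)$. Upon receiving any $r \in f(\Phi_e(p,q))$, the backward reduction outputs the pair whose first component is $\pairing{q,r}$ and whose second component is $\Phi_i(p,q,r)$. The first component is a valid $f/g$-solution to $(e,i,p)$ directly from the formula for $f/g(e,i,p)$, and the second component lies in $g(q)$ by the second clause of $\dom(f/g)$.

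For the second direction, fix $h$ with $h \times g \leqW f$ via forward functional $\Phi$ and backward functional $\Psi$. By the $s$-$m$-$n$ theorem, I would choose fixed indices $e$ and $i$ (independent of inputs) such that $\Phi_e(p,q) = \Phi(\pairing{p,q})$ and $\Phi_i(p,q,r) = \pi_2(\Psi(\pairing{p,q}, r))$, where $\pi_2$ extracts the $g$-component of the output pair. For any $p \in \dom(h)$ and any $q \in \dom(g)$ one has $\pairing{p,q} \in \dom(h \times g)$, so $\Phi_e(p,q) = \Phi(\pairing{p,q}) \in \dom(f)$, and for any $r \in f(\Phi_e(p,q))$ the output $\Psi(\pairing{p,q}, r)$ lies in $h(p) \times g(q)$, whose second projection $\Phi_i(p,q,r)$ lies in $g(q)$. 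Hence $(e,i,p) \in \dom(f/g)$. The reduction $h \leqW f/g$ then sends $p \mapsto (e,i,p)$, and from a solution $\pairing{q,r}$ to $f/g(e,i,p)$ it outputs $\pi_1(\Psi(\pairing{p,q}, r))$, which lies in $h(p)$.

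I do not expect a significant obstacle: the proof is essentially a direct unpacking of \thref{def:quotient} combined with $s$-$m$-$n$. The only mild subtlety is that the indices $e$ and $i$ must be produced uniformly from $\Phi$ and $\Psi$, independently of the particular $p$, so that the forward map $p \mapsto (e,i,p)$ is computable; this is exactly what $s$-$m$-$n$ delivers. The hypothesis $g \neq 0$ plays no role in the specific computations above, but it is needed behind the scenes to guarantee that $\dom(f/g)$ carries its intended meaning, so that a solution $\pairing{q,r}$ with $q \in \dom(g)$ actually exists and can be used in the backward step.
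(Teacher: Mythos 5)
Your proof is correct and follows essentially the same route as the paper's: the first direction unpacks the definition of $\dom(f/g)$ to verify $f/g \times g \leqW f$, and the second direction fixes indices $e,i$ for the forward and (second-projected) backward functionals of a reduction $h \times g \leqW f$, then maps $p \mapsto (e,i,p)$. The paper omits naming the $s$-$m$-$n$ theorem but implicitly uses it just as you do, so the only difference is a matter of exposition.
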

\begin{proof}
Given a $g$-instance $q$ and an $f/g$-instance $(e,i,p)$,
consider the $f$-instance $\Phi_e(p,q)$.
Given an $r \in f(\Phi_e(p,q))$, we may compute (using $q$ and $e,i,p$) $\Phi_i(p,q,r)$, which is a $g$-solution of $q$,
and $\pairing{q,r}$, which is an $f/g$-solution of $(e,i,p)$.
This proves $f/g \times g \leqW f$.

Suppose $h$ is a problem such that $h \times g \leqW f$ via $\Phi_e$, $\Psi$. We want to show that $h \leqW f/g$. Let $p$ be an $h$-instance. Observe that, in order to compute a solution for $h(p)$, we only need a $g$-input $q$ and any solution $r\in f(\Phi_e(p,q))$. 

Let $i$ be an index for the functional which takes in input $(p,q,r)$ and produces the projection of $\Psi(p, q, r)$ to the second coordinate.
Given the $h$-instance $p$, we uniformly compute the $f/g$-instance $(e,i,p)$.
For every $\pairing{q,r}\in f/g(e,i,p)$, the projection of $\Psi(p,q,r)$ to the first coordinate is an $h$-solution to $p$.
\end{proof}

It follows that $f/g$ is well-defined on all (non-zero) Weihrauch degrees, so we may extend \thref{def:quotient} to all (non-zero) problems. The following proposition is a straightforward consequence of the definitions.

\begin{proposition}
\begin{enumerate}
    \item If $g$ is pointed, then $f/g \weireducible f$.
    \item If $f_0 \weireducible f_1$ and $g_0 \weireducible g_1$, then $f_0/g_1 \weireducible f_1/g_0$.
\end{enumerate}
\end{proposition}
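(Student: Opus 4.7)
The plan is to reduce both parts to the universal property $f/g \equivW \max_{\leqW}\{h \st h \times g \leqW f\}$ established in \thref{thm:quotient_total}, rather than arguing directly from \thref{def:quotient}.

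For part (1), the key observation is that whenever $g$ is pointed, i.e.\ $\dom(g)$ contains a computable point $q_0$, every problem $h$ satisfies $h \leqW h \times g$: the forward functional pairs the given $h$-instance with $q_0$, and the backward functional projects the solution onto its first coordinate. Combining this with $f/g \times g \leqW f$ from \thref{thm:quotient_total}, one immediately obtains $f/g \leqW (f/g) \times g \leqW f$. A direct construction from \thref{def:quotient} would also work: substitute the computable $q_0$ for the universally quantified $q$ in the definition, so that the $f$-instance $\Phi_e(p, q_0)$ is computable from the $f/g$-instance $(e, i, p)$, and any $r \in f(\Phi_e(p, q_0))$ yields the valid $f/g$-solution $\pairing{q_0, r}$.

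For part (2), assume $f_0 \leqW f_1$ and $g_0 \leqW g_1$, and set $h := f_0/g_1$. By \thref{thm:quotient_total}, $h \times g_1 \leqW f_0$. Monotonicity of the parallel product together with the two hypotheses then yields the chain $h \times g_0 \leqW h \times g_1 \leqW f_0 \leqW f_1$. Applying the maximality clause in the characterization of $f_1/g_0$ gives $h \leqW f_1/g_0$, which is exactly the desired reduction $f_0/g_1 \leqW f_1/g_0$.

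I do not expect any serious obstacle: both items are essentially formal consequences of the universal property encoded in \thref{thm:quotient_total}, which did the real work of turning the parallel quotient into a maximum. The only delicate point is to fix what is meant by \emph{pointed} in part (1); the usual convention that $\dom(g)$ admits a computable element is enough to make the substitution $q \mapsto q_0$ produce a computable $f$-instance, which is all that the argument requires.
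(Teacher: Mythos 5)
The paper gives no proof here, remarking only that the proposition ``is a straightforward consequence of the definitions.'' Your argument is correct and is the natural way to fill in that remark: once \thref{thm:quotient_total} is in place, both items follow formally from the universal property, exactly as you present them. Part (1) uses $h \leqW h \times g$ for pointed $g$ together with $(f/g)\times g \leqW f$, and part (2) is the expected antitone/monotone argument via maximality; no gaps.
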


The parallel quotient is useful to describe to what extent a problem $f$ is stable under parallel product. If $f$ is pointed and closed under product, then $f/f \weiequiv f$. At the same time, if $f$ is not closed under product, the quotient $f/f$ gives a quantitative estimate of ``the lack of closure of $f$ under parallel product''.

The parallel quotient is an example of a \emph{residual operator}. In general, for a fixed operator~$\cdot$, it is interesting to investigate the existence of the following degrees: $\max \{h \st f \cdot h \weireducible g\}$, $\max \{h \st h \cdot f \weireducible g\}$, $\min \{h \st g \weireducible f \cdot h \}$, and $\min \{h \st g \weireducible h \cdot f\}$. A residual operator for~$\cdot$ is one that witnesses the existence of one (or more) of these maxima/minima. The existence of residual operators for the join $\sqcup$ and the meet $\sqcap$ in the Weihrauch degrees has been studied in \cite{HiguchiPauly13}. Besides, the existence of $\min \{h \st g \weireducible f \compproduct h\}$ has been proven in \cite{paulybrattka4} (resulting in the \emph{implication} operator $\to$). The parallel quotient is a residual operator for $\times$. A more thorough study of the existence of residual operators for other choices of $\cdot$ is currently under investigation. Further properties of the parallel quotient will be discussed in an upcoming paper. 

As already observed, \thref{theo:dsproducts} can be rewritten as 
\[ \DS/\parallelization{\ACC_\mathbb{N}}\weiequiv \BS/\parallelization{\ACC_\mathbb{N}} \weiequiv \mflim. \]

We now discuss the parallel quotient of $\DS$ and $\BS$ over other well-known problems. 

\begin{proposition}
	\thlabel{thm:DSxCN=DS}
	$\DS \weiequiv \CNatural \times \DS \equivW \DS \compproduct \CNatural$ and $\BS \weiequiv \CNatural \times \BS \equivW \BS \compproduct \CNatural$.
\end{proposition}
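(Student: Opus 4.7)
My plan is to split both equivalences into a trivial chain of reductions and a single nontrivial piece. Since $\CNatural$ is pointed, $\DS\leqW\CNatural\times\DS$; and the parallel product always reduces to the compositional product, giving $\CNatural\times\DS\leqW\DS\compproduct\CNatural$. The same holds with $\BS$ in place of $\DS$. So the entire statement will follow once I establish the nontrivial reductions $\DS\compproduct\CNatural\leqW\DS$ and $\BS\compproduct\CNatural\leqW\BS$, which I will do by one common construction.

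Given an instance $(w,p)$ of $\DS\compproduct\CNatural$, write $A:=\CNatural(p)$ (non-empty by hypothesis) and $L_n:=\Phi_w(p,n)$; $L_n$ is a genuine ill-founded linear order for each $n\in A$ but may be nonsense for $n\notin A$. My idea is to combine the $L_n$'s into one linear order via the ordered sum
\[ M\;=\;C_0+C_1+C_2+\cdots, \]
where each $C_n$ will be a computable ``sanitized'' copy of $L_n$ on a subset of $\mathbb{N}$. I will build $C_n$ stage-by-stage, enumerating its domain lazily: at stage $s$ I attempt to add the next element of $L_n$ to $C_n$, but only if the current approximation of $L_n$ already determines its position relative to all previously added elements (consistently with $C_n$'s current order) and $n+1$ has not yet been enumerated in $p$. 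Otherwise I do nothing for $C_n$ at this stage.

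This sanitization is designed to ensure two properties. First, if $n\in A$, then no check ever fails permanently, so $C_n$ will be order-isomorphic to the ill-founded order $L_n$. Second, if $n\notin A$, then $n+1$ eventually appears in $p$, after which $C_n$ stops growing and hence remains finite. The characteristic function of $\leq_M$ on $\mathbb{N}^2$ is computable by waiting in parallel for the relevant elements to enter $M$'s domain or for $n+1$ to enter $p$, exactly one of which happens in finite time. These properties force $M$ to be ill-founded, and any descending sequence in $M$ has non-increasing block indices, hence eventually constant at some $n$ forced to lie in $A$, yielding a descending sequence in the corresponding $L_n$.

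For $\BS$ I will run the same construction with linear orders replaced by quasi-orders, descending sequences by bad sequences, and ill-founded by non-wqo. The one new observation needed is that in an ordered sum of quasi-orders the block indices of a bad sequence are still non-increasing: two indices $i<j$ lying in blocks $n_i<n_j$ would be forced by the sum order to satisfy $q_i\preceq q_j$, contradicting badness. I expect the main obstacle to be the sanitization step itself: turning $L_n$ (resp.\ its $\BS$ counterpart) into a computable total (quasi-)order that is genuinely well-behaved when $n\in A$ and harmless when $n\notin A$, without ever having to decide in finite time whether $n\in A$. The lazy stage-by-stage addition described above is exactly what makes this work.
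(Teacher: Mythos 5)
Your outline has the right decomposition (the only nontrivial piece is $\DS\compproduct\CNatural\leqW\DS$, resp.\ $\BS\compproduct\CNatural\leqW\BS$), and the ``lazy'' stage-by-stage sanitization idea is in the right spirit. But the sum construction $M = C_0 + C_1 + C_2 + \cdots$ does not support a computable backward functional, and this is a genuine gap, not a presentation issue.

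The problem is that you build $C_n \cong L_n$ for \emph{every} $n \in A=\CNatural(p)$, so $M$ may contain several infinite, ill-founded blocks. A descending sequence in $M$ can start in a high block $n_1 \in A$, spend a while there, then drop to a lower block $n_2 \in A$, and so on. You correctly observe that the block indices are non-increasing and so eventually constant at some $n_0 \in A$, but the backward reduction must \emph{compute} a pair $(y,v)$ with $v\in\CNatural(p)$ and $y$ a descending sequence in $L_v$. From a finite prefix of the given descending sequence there is no way to determine the stable block $n_0$ (nor to decide whether the block index will drop again), so you cannot commit to an output $v$, and committing early to $\mathrm{block}(a_0)$ fails because only finitely many $a_i$ may lie there. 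In effect your construction reduces $\DS\compproduct\CNatural$ to $\mflim\compproduct\DS$, not to $\DS$. The paper sidesteps this by building a \emph{single} running copy: it builds $L_0$ until $0$ is ruled out, then $L_1$, and so on, so the resulting order is, modulo a finite prefix, isomorphic to $L_{\min A}$. Consequently every infinite descending sequence already starts in the stable block, and $v$ can be read off from the very first element. You can repair your sum construction by only allowing $C_n$ to grow after all of $1,\dots,n$ have appeared in $p$ (so $C_n$ for $n>\min A$ stays empty), at which point it coincides with the paper's construction up to presentation.

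Separately, the claimed computability of the characteristic function of $\leq_M$ is argued incorrectly: you wait ``for the relevant elements to enter $M$'s domain or for $n+1$ to enter $p$,'' asserting exactly one happens in finite time. If $n\in A$, neither alternative need ever occur for a candidate element that simply never gets added to $C_n$. The standard fix, which the paper uses explicitly, is to tag each element with the stage at which it was added (a third coordinate $s$); then membership in $M$ is decided by running $s$ stages of the construction. This is routine, but as stated your argument has a hole here as well.
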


\begin{proof}
	The reductions $\DS \weireducible \CNatural \times \DS \weireducible \DS \compproduct \CNatural$ are trivial. To show that $\DS\compproduct \CNatural \weireducible \DS$, let $(w,p)\in\dom(\DS\compproduct \CNatural)$. Without loss of generality, we may view the (possibly finite) string produced by $\Phi_w(n)$ as a linear order $L_n$. Clearly $L_n$ need not be ill-founded if $n\notin \CNatural(p)$.
	
	We shall uniformly compute a linear order $L$ as follows: the elements of $L$ will be of the form $(n,x,s)$, where 
	\[ (n,x,s) \le_L (m,y,s') \defiff n<m \text{ or } (n=m \text{ and } x\le_{L_n} y).\] 
	The third component plays no role in the order and is only used to guarantee that the resulting linear order is computable (and not just c.e.). The construction below will guarantee that no two triples share the same third component.  

	Starting from $n = 0$, as long as $n$ is not enumerated outside of $\CNatural(p)$, we build an isomorphic copy of $L_n$ at the top of $L$. More precisely we add $(n,x,s)$ to $L$, where $s$ is the current step of the construction and $x$ is the $\mathbb{N}$-least element of $L_n$ which has not been added.

    If we ever see $n \notin \CNatural(p)$, we continue the above with $n+1$ instead of $n$.

    This completes the construction of $L$.	Observe that if $n = \min \CNatural(p)$ then, modulo a finite initial segment, $L$ is isomorphic to $L_n$. In particular, $L$ is ill-founded and, given any infinite descending sequence through $L$, we can uniformly compute $n$ and a descending sequence through $L_n$. 

	The very same argument works for $\BS$ in place of $\DS$: just replace ``linear order'', ``ill-founded'', and ``descending sequence'' with ``quasi-order'', ``a non-well quasi order'' and ``bad sequence'' respectively.
\end{proof}

\begin{lemma}
	\thlabel{thm:f_over_fo_problem}
	Let $g$ be computably true (i.e., every $g$-instance has a $g$-computable solution). Suppose $g\leqW f\restrict{X}$, where $f\restrict{X}$ is such that, for every $x\in X$ and every $y\in f(x)$, $y\not\turingreducible x$. Then $f/g\not\leqW g$.
\end{lemma}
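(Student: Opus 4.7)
The plan is to derive a contradiction from $f/g \leqW g$ by constructing a fully computable $f/g$-instance, then tracing the reduction to obtain a computable $f|_X$-solution, in violation of the hypothesis.

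First, I would ``internalize'' the reduction $g \leqW f\restrict{X}$ as an $f/g$-instance. Fix computable functionals $\Phi_a, \Phi_b$ witnessing $g \leqW f\restrict{X}$, so that for every $q \in \dom(g)$ we have $\Phi_a(q) \in X$ and for every $r \in f(\Phi_a(q))$ we have $\Phi_b(q,r) \in g(q)$. By the $s$-$m$-$n$ theorem, pick indices $e,i$ such that $\Phi_e(p,q) = \Phi_a(q)$ and $\Phi_i(p,q,r) = \Phi_b(q,r)$ for all inputs (ignoring $p$). Then $(e,i,p) \in \dom(f/g)$ for every $p$, and any $f/g$-solution is of the form $\pairing{q,r}$ with $q \in \dom(g)$ and $r \in f(\Phi_a(q))$.

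Next, suppose towards a contradiction that $f/g \leqW g$ via computable $\Phi, \Psi$. Pick any computable $p$ (e.g.\ $p = 0^{\mathbb N}$) and set $p' := \Phi(e,i,p)$; this is a computable element of $\dom(g)$. Since $g$ is computably true, there exists $s \in g(p')$ with $s \leq_{\mathrm T} p'$, and since $p'$ is computable, so is $s$. Applying the backward functional produces an $f/g$-solution $\pairing{q,r} := \Psi(e,i,p,s)$, and since all inputs to $\Psi$ are computable, both $q$ and $r$ are computable.

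Finally, I would close the argument by invoking the hypothesis on $f\restrict{X}$: since $q \in \dom(g)$, we have $\Phi_a(q) \in X$, and $r \in f(\Phi_a(q))$ forces $r \not\turingreducible \Phi_a(q)$. But $\Phi_a(q)$ is computable (as $q$ is) and $r$ is computable, so $r \leq_{\mathrm T} \Phi_a(q)$ trivially, which is the desired contradiction. The main subtlety is not technical but conceptual: the parallel quotient $f/g$ is precisely the right gadget to ``swallow'' the non-uniform reduction $g \leqW f\restrict{X}$, so that a reduction $f/g \leqW g$ would combine with the computable truth of $g$ to manufacture a computable $f\restrict X$-solution out of thin air, contradicting the Turing-incomparability assumption on $f\restrict X$.
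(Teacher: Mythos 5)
Your proof is correct and takes essentially the same approach as the paper: you build a computable $f/g$-instance $(e,i,p)$ from the reduction $g \leqW f\restrict{X}$, observe that it can have no computable solution (since any solution $\pairing{q,r}$ with $q$ computable would give a computable $r \in f(\Phi_e(p,q))$ with $\Phi_e(p,q) \in X$ computable, contradicting the Turing-incomparability hypothesis), and then derive a contradiction from $f/g \leqW g$ together with computable truth of $g$. You spell out the final step slightly more explicitly than the paper (which merely asserts that $f/g(e,i,p)$ has no computable solution), but the argument is the same.
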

\begin{proof}
    Let $(e,i,p)\in\dom(f/g)$ be such that $p$ is computable and the maps $q\mapsto \Phi_e(p, q)$ and $(q,r)\mapsto \Phi_i(p,q,r)$ witness the reduction $g\leqW f\restrict{X}$. Observe that, by definition of $f/g$ and $(e,i,p)$, every solution of $f/g(e,i,p)$ computes a solution for $f(\Phi_e(p,q))$ for some input $q\in\dom(g)$. In particular, since $\Phi_e(p,q)\in X$, $f/g(e,i,p)$ does not have any computable solution.
	
	If $f/g \leqW g$ as witnessed by $\Phi,\Psi$, then for every $t\in g\Phi(e,i,p)$, $\Psi( (e,i,p), t) \in f/g(e,i,p)$. This is a contradiction as $g$ is computably true and therefore $g\Phi(e,i,p)$ always contains a computable point.
\end{proof}

\begin{proposition}
	For $f\in\{\DS,\BS\}$, $\PiBound\leW  f / \PiBound\leqW f / \RT{1}{2} \leW f$.
\end{proposition}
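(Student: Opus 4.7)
The plan is to prove each of the three relations in the chain separately, using the algebraic properties of the parallel quotient (\thref{thm:quotient_total} and the monotonicity observation following it) together with \thref{thm:f_over_fo_problem}. I expect the rightmost strict inequality to be the main obstacle.

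For the leftmost inequality $\PiBound \leW f/\PiBound$, I would first establish $\PiBound \leqW f/\PiBound$, which by \thref{thm:quotient_total} is equivalent to $\PiBound \times \PiBound \leqW f$. Since the union of two $\boldsymbol{\Pi}^1_1$-codes for finite sets is again such a code and any upper bound of the union upper-bounds both, $\PiBound$ is closed under parallel product, so $\PiBound \times \PiBound \equivW \PiBound \equivW \fop{\DS} \leqW f$ (using \cite[Thm.\ 4.10, 4.16]{goh-pauly-valenti}). For the strictness $f/\PiBound \nleqW \PiBound$, I would invoke \thref{thm:f_over_fo_problem} with $g := \PiBound$: this problem is computably true because its solutions are natural numbers, and it reduces to $\DS$ (and analogously to $\BS$) through pseudo-well-orders, which form a family of instances on which every solution is non-hyperarithmetic and in particular not Turing-reducible to the input. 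The middle non-strict reduction $f/\PiBound \leqW f/\RT{1}{2}$ follows from the contravariant monotonicity of the quotient together with $\RT{1}{2} \equivW \Fin{2}{\DS} \leqW \fop{\DS} \equivW \PiBound$ (\thref{theo:finkbs}). The upper bound $f/\RT{1}{2} \leqW f$ in the last inequality is the general fact that $f/g \leqW f$ whenever $g$ is pointed, and $\RT{1}{2}$ is pointed.

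The hard part is the strictness $f \nleqW f/\RT{1}{2}$, equivalently $f \times \RT{1}{2} \nleqW f$: this says $\RT{1}{2}$ is not absorbed by $f$ under parallel product, in contrast with \thref{thm:DSxCN=DS} where $\CNatural$ is absorbed via a sequential stacking of linear orders. Assuming $f \times \RT{1}{2} \leqW f$ via witnesses $\Phi, \Psi$, I would diagonalize against $\Psi$'s commitment to a homogeneous color in the spirit of \thref{lemma:seperationwork}: fix a pseudo-well-order $L$ (resp.\ an analogous tree-decomposable non-wqo for $\BS$) and build a computable 2-coloring $c$ stage by stage, preserving the invariant that $\Phi(L, c)$ remains an ill-founded $f$-instance while driving the color bit output by $\Psi$ on some descending (resp.\ bad) sequence of $\Phi(L, c)$ to be incorrect. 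Coordinating the adaptive construction of $c$ with the continuity of $\Psi$ and ensuring that a valid $f$-solution of $\Phi(L, c)$ is always eventually available for $\Psi$ to read is where most of the technical effort would lie.
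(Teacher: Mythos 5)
Your handling of the first two links in the chain matches the paper. For $\PiBound \leqW f/\PiBound$ you correctly reduce to $\PiBound \times \PiBound \leqW f$ via \thref{thm:quotient_total} and the closure of $\PiBound$ under product; for the strictness you correctly invoke \thref{thm:f_over_fo_problem} with $g = \PiBound$ and the fact that $\PiBound$-instances can be presented as (sums/products with) Harrison/pseudo-well-orders so that the associated $\DS$-instances have no computable solutions; and for the middle reduction you correctly use antimonotonicity together with $\RT{1}{2} \leqW \PiBound$.

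The gap is in the strictness of the last inequality. You correctly reformulate it as $f \times \RT{1}{2} \nleqW f$, but then you propose to prove this by a fresh stage-by-stage diagonalization in the style of \thref{lemma:seperationwork}, and you never actually carry it out. That construction is much harder than you acknowledge: you would be fixing a non-computable $\DS$-instance $L$ while adaptively building $c$, yet you cannot computably evaluate $\Psi$ on actual solutions of $\Phi(L,c)$ since those solutions are non-computable; one would have to manage this via \thref{lem}-style extendibility arguments as in \thref{thm:sort2xRT12_not_le_BS}, which is a substantial piece of work in its own right. More importantly, the construction is unnecessary: the paper's key theorem, \thref{theo:dsproducts}, already hands you the result. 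If $\RT{1}{2} \times f \leqW f$, then since $\parallelization{\ACC_\mathbb{N}} \leqW \DS \leqW f$ and $\parallelization{\ACC_\mathbb{N}} \times \RT{1}{2} \leqW f \times \RT{1}{2}$, you would get $\parallelization{\ACC_\mathbb{N}} \times \RT{1}{2} \leqW f$, and \thref{theo:dsproducts} (which covers both $\DS$ and $\BS$) then forces $\RT{1}{2} \leqW \mflim$, which is false (e.g.\ because $\fop{\mflim} \equivW \CNatural$ but $\RT{1}{2} \nleqW \CNatural$). You should replace the diagonalization sketch with this two-line argument.
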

\begin{proof}
	The reduction $\PiBound\leqW  \DS / \PiBound$ follows from the fact that $\PiBound$ is closed under product (see e.g.\ \cite[Theorem 7.16]{soldavalenti}) and $\PiBound \weireducible \DS$. To show that this reduction is strict, we shall use \thref{thm:f_over_fo_problem}. Let $X$ be the set of ill-founded linear orders $L$ with no $L$-computable descending sequence. One can modify the reduction \cite[Proposition 4.8]{goh-pauly-valenti} from $\PiBound$ to $\DS$ to show
    $\PiBound \leqW \DS|_X$: Given a $\PiBound$-instance (thought of as a sequence of trees $(T_n)_n$), define a $(T_n)_n$-computable ill-founded tree $S$ with no $(T_n)_n$-computable path (this can be done in a manner which is uniformly computable from $(T_n)_n$). Then define a linear order $L = \bigcup_n \{n\} \times \mathrm{KB}(T_n \times S)$. Note that if $T_n$ is well-founded, then so is $T_n \times S$, while if $T_n$ is ill-founded, then $T_n \times S$ is ill-founded with no $(T_n)_n$-computable path. So $L$ is ill-founded with no $L$-computable descending sequence. The backward functional works as in \cite[Proposition 4.8]{goh-pauly-valenti}. It follows from \thref{thm:f_over_fo_problem} that $\DS/\PiBound \nleqW \PiBound$.

	The reduction $\DS / \PiBound\leqW \DS / \RT{1}{2}$ follows by the antimonotonicity of the quotient operator. The reduction $ \DS / \RT{1}{2} \leqW \DS$ is immediate as $\RT{1}{2}$ is pointed. The fact that it is strict follows from the fact that if $\DS \leqW \DS /\RT{1}{2}$ then, by definition, $\RT{1}{2}\times \DS \weireducible \DS$. This is impossible as, by \thref{theo:dsproducts}, $\RT{1}{2}\times \parallelization{\ACC_\mathbb{N}}\not\weireducible \DS$.

	Replacing $\DS$ with $\BS$ in the above argument shows that the statement holds for $\BS$ as well.
\end{proof}

Notice that, in the proof of the previous result, we used the fact that both $\DS/\PiBound$ and $\BS/\PiBound$ (and hence $\DS/\RT{1}{2}$ and $\BS/\RT{1}{2}$) have computable instances with no computable solutions. In particular, this implies that none of them is a first-order problem. The following result provides a lower bound for $\DS/\RT{1}{2}$ and $\BS/\RT{1}{2}$.

Let us denote with $\DSfe$ (resp.\ $\BSfe$) the problem ``given an ill-founded linear order (resp.\ non-well quasi-order) $X$, compute a sequence that, cofinitely, is a $X$-descending sequence (resp.\ $X$-bad sequence)''. We mention that $\DSfe$ and $\BSfe$ can be characterized as $\CNatural \rightarrow \DS$ and $\CNatural \rightarrow \BS$ respectively.
\begin{proposition}
	\thlabel{thm:RT12xDSfe<=DS}
	$\RT{1}{2} \times \DSfe \leqW \DS$ and $\RT{1}{2} \times \BSfe \leqW \BS$.
\end{proposition}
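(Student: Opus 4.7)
The plan is to sketch the reduction $\RT{1}{2} \times \DSfe \leqW \DS$; the $\BS$-version proceeds by an analogous construction with ill-founded linear orders replaced by non-well quasi-orders and descending sequences by bad sequences.

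Given $c\colon\mathbb{N}\to\{0,1\}$ and an ill-founded linear order $L$, I would build $L^*$ as the concatenation $L_0 + L_1$ of two ``color-filtered'' copies of $L$, placing $L_0$ below $L_1$. Enumerating $L$ as $(x_k)_{k\in\mathbb{N}}$, at each stage $s$ I add the next unused $x_k$ to $L_{c(s)}$, equipping each $L_i$ with the order induced from $L$. Then $L_i \cong L$ when $|c^{-1}(i)|=\infty$ and $L_i$ is finite otherwise, so $L^*$ is ill-founded (at least one summand is ill-founded).

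The key structural observation is that because $L_0$ lies below $L_1$, any infinite descending sequence $(p_n)_n$ in $L^*$ either stays in $L_1$ throughout---which forces $|c^{-1}(1)|=\infty$---or enters $L_0$ at some step and remains there, the infinite $L_0$-tail forcing $|c^{-1}(0)|=\infty$. The $\DSfe$-output is then obtained by projecting $(p_n)_n$ to $L$ via $(x,i)\mapsto x$: it is descending within each $L_i$, with at most one non-descending jump at a potential $L_1\to L_0$ transition, hence cofinitely descending, as required. The color output commits to $0$ as soon as an $L_0$-element is observed in the DS.

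I expect the main obstacle to be the commitment to color $1$ in the case where the DS remains in $L_1$ throughout: after any finite $L_1$-prefix we cannot a priori rule out a later drop to $L_0$. The resolution exploits the ``cofinitely'' flexibility of $\DSfe$ (which permits a finite non-descending prefix in the output sequence), allowing us to jointly read $c$ and the DS and commit to color $1$ only after enough $L_1$-elements have appeared in the DS to certify $|c^{-1}(1)|=\infty$ via the construction of $L_1$ (each of whose elements was added at a specific color-$1$ occurrence of $c$). For the $\BS$-version, the construction transfers by using the same placement of $Q_0$ below $Q_1$ in the quasi-order sense and analysing bad sequences in place of descending sequences.
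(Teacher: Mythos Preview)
Your construction $L^* = L_0 + L_1$ does give a cofinitely $L$-descending sequence by projection, but there is a genuine gap in producing the $\RT{1}{2}$-output. Take $c = 1\,0\,0\,0\,\cdots$: then $L_1$ is a singleton, $L_0 \cong L$, and $L^* \cong L + 1$. A descending sequence in $L^*$ may start at the single $L_1$-element before dropping into $L_0$; after observing that first element you cannot commit to colour $1$ (it occurs only once), yet you also cannot know that a drop is coming---had both colours been infinite the sequence might stay in $L_1$ forever. Your proposed resolution (``commit to $1$ after enough $L_1$-elements have appeared'') does not work: each $L_1$-element in the descending sequence witnesses exactly one colour-$1$ stage of $c$, so no finite number of them certifies $|c^{-1}(1)| = \infty$. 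The ``cofinitely'' slack in $\DSfe$ is a property of the \emph{sequence} output and gives you no room on the colour output, which must be a single correct natural number.

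The paper's construction sidesteps this by using not two but unboundedly many levels, one per maximal constant-colour run of $c$, stacked in order of appearance, together with a special bottom level that receives one $L$-element at every $1\to 0$ colour change. If $c$ is eventually constant with value $b$, the only infinite level is the top one, carrying tag $b$, and any infinite descending sequence lies entirely there; the tag on its \emph{first} element is already the correct colour. If $c$ changes infinitely often, both colours are valid $\RT{1}{2}$-answers, the bottom level is a full copy of $L$, and the projected sequence is eventually $L$-descending. The key point your two-level version misses is arranging that the first element of the descending sequence already determines a valid colour, so that no waiting is required.
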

\begin{proof}
	We only prove that $\RT{1}{2} \times \DSfe \leqW \DS$; the proof for $\BS$ can be obtained analogously by adapting the following argument. 
	Let $(c,L)$ be an instance of $\RT{1}{2} \times \DSfe$, where $c\function{\mathbb{N}}{2}$ is a $2$-coloring and $L=\sequence{x_n}{n\in\mathbb{N}}$ is an ill-founded linear order. We build a linear order $M$ in stages. Intuitively, we want to build a copy of $L$ until we see that the current color changes. When this happens, we start building a fresh copy of $L$ at the top of the current linear order. However, to guarantee that the resulting order will be ill-founded, we also build another copy of $L$ at the bottom, which gets expanded every time two color changes are observed.

	Formally, we construct the order $M$ as follows: elements are of the form $(n,x,b,s)$ and are ordered as 
	\[ (n,x,b,s) \le_M (m,y,b',s') \defiff n<m \text{ or } (n=m \text{ and } x\le_L y).  \]
	The third and fourth components play no role in the order $M$. The construction below will guarantee that this is a linear order (rather than a quasi-order). 
	
	At stage $0$, we add $(1,x_0,c(0),0)$ to $M$. We also define $p_0:=-1$, $q_0:=0$, and $h_0:=1$. Intuitively, $p_s$ stores the last position in the bottom copy of $L$ and $q_s$ stores the last position in the top copy of $L$.

	At stage $s+1$, we distinguish the following cases: 
	\begin{itemize}
		\item if $c(s+1)=c(s)=b$, we add $(h_s,x_{q_s+1},b, s+1)$ to $M$. We define $q_{s+1}:=q_s+1$, $p_{s+1}:=p_s$, $h_{s+1}:=h_s$ and go to the next stage.
		\item if $c(s+1)=1$ and $c(s)=0$, we add $(h_s+1,x_{0},1,s+1)$ to $M$. We define $q_{s+1}:=0$, $p_{s+1}:=p_s$, $h_{s+1}:=h_s+1$ and go to the next stage.
		\item if $c(s+1)=0$ and $c(s)=1$, we add $(h_s+1,x_{0},0, s+1)$ to $M$. We also add $(0,x_{p_s+1},2, s+1)$ to $M$. We define $q_{s+1}:=0$, $p_{s+1}:=p_s+1$, $h_{s+1}:=h_s+1$ and go to the next stage.
	\end{itemize}
	This concludes the construction.

	Observe that if the coloring $c$ is eventually constant and $\lim_n c(n)=b$, then $M \cong N+L$ for some $N\in\mathbb{N}$ (in particular it is ill-founded) and every descending sequence through $M$ is of the form $\sequence{(n,y_i,b,s_i)}{i\in\mathbb{N}}$ for some $n$, where $\sequence{y_i}{i\in\mathbb{N}}$ is a $L$-descending sequence. In particular, we can compute a solution for $(\RT{1}{2} \times \DSfe)(c,L)$ via projections. 

	On the other hand, if $c$ is not eventually constant, then $M \cong L + \omega$. Every descending sequence through $M$ must eventually list elements of the initial segment of $M$ isomorphic to $L$. In particular, if $\sequence{(n_i,y_i,b_i,s_i)}{i\in\mathbb{N}}$ is an $M$-descending sequence then $b_0\in \RT{1}{2}(c)$ (trivially) and $\sequence{y_i}{i\in\mathbb{N}} \in \DSfe(L)$.
\end{proof}

Let $\Sort_2: \Cantor \to \Cantor$ be defined by $\Sort_2(p) = 0^i1^\omega$ if there are exactly $i$-many $n$ such that $p(n)=0$, otherwise $\Sort_2(p) = 0^\omega$. While $\Sort_2$ and $\RT{1}{2}$ are quite weak on their own, their parallel product does not reduce to $\BS$:

\begin{proposition}
	\thlabel{thm:sort2xRT12_not_le_BS}
	$\Sort_2 \times \RT{1}{2}\not\weireducible \BS$.
\end{proposition}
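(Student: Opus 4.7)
I would argue by contradiction, adapting the spoil-based construction in the proof of \thref{theo:dsproducts} to the setting where only a single $\RT{1}{2}$-instance is available rather than the countably many $\ACC_\mathbb{N}$-instances used there.

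Suppose for contradiction that $\Sort_2 \times \RT{1}{2} \leqW \BS$ via forward $\Phi$ and backward $\Psi$. Given an arbitrary $\Sort_2$-instance $p \in 2^\omega$, I would build the $\RT{1}{2}$-instance $c$ in stages, starting with an alternating $01$ prefix so both colors remain candidate $\RT{1}{2}(c)$-solutions, while simultaneously enumerating a set $W$ of finite bad sequences in $\Phi(p,c)$ that must be non-extendible. The monitoring condition at each stage is: for each finite bad sequence $\sigma \notin W$ in the current $\Phi(p,c)$, look for a finite bad sequence $\tau$ with $\sigma \trianglelefteq^{\Phi(p,c)} \tau$ such that $\Psi(p,c,\tau)$ has committed to a color $i \in \{0,1\}$; when found, I would extend $c$'s tail with a long run of $(1-i)$s, forcing $\RT{1}{2}(c) = 1-i$. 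By \thref{lem} and the validity of $\Psi$, this renders $\tau$ (and hence $\sigma$) non-extendible in $\Phi(p,c)$, and $\sigma$ enters $W$.

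The main obstacle, compared to \thref{theo:dsproducts}, is that only one $\RT{1}{2}$-instance is available, so only one irrevocable commit of $c$ is possible; after that commit, only the ``wrong color'' direction is captured by $W$. To complete the argument, I would try to show that once the commit is made, the greedy search through $\Phi(p,c)$ avoiding $W$ (using $\CNatural$ at most once, since the commit is one-shot) still produces an infinite bad sequence from which $\Psi$ recovers $\Sort_2(p)$, yielding $\Sort_2 \leqW \CNatural$ and hence a contradiction, as $\Sort_2 \nleqW \CNatural$ (the required information is $\Sigma^0_2$ in nature, beyond the $\Pi^0_1$-choice power of $\CNatural$).

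An alternative route, which may be cleaner, is to exploit the reduction $\ACC_\mathbb{N} \leqW \Sort_2$ (via the encoding $p(s) = 1$ until the $\ACC_\mathbb{N}$-instance enumerates something and $p(s) = 0$ thereafter, so $\Sort_2(p)$ distinguishes $A = \emptyset$ from $A = \{k\}$) and try to lift the product structure so that $\widehat{\ACC_\mathbb{N}} \times \RT{1}{2} \leqW \BS$; by \thref{theo:dsproducts} this would force $\RT{1}{2} \leqW \mflim$, contradicting $\widehat{\RT{1}{2}} \equivW \KL \nleqW \mflim$. The delicate step, and the principal obstacle of this alternative, is justifying the embedding $\widehat{\ACC_\mathbb{N}} \leqW \Sort_2 \times \RT{1}{2}$ despite a single $\Sort_2$-call carrying essentially only a count in $\mathbb{N} \cup \{\omega\}$, which would require combining the one-shot spoil mechanism with the per-stage encoding of multiple $\ACC_\mathbb{N}$-instances.
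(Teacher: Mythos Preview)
Both routes you sketch have genuine gaps.

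\textbf{First approach.} The plan to keep $p$ arbitrary and diagonalize only through $c$ cannot be completed as stated. After a single irrevocable commit of $c$ to the tail $(1-i)^\omega$, the set $W$ you enumerate (finite bad sequences on which $\Psi$ has committed to colour $i$) is only a \emph{subset} of the non-extendible sequences: there will typically be non-extendible $\sigma$ that have not yet committed to any colour, or that have committed to the correct colour $1-i$ and are non-extendible for reasons unrelated to the $\RT{1}{2}$-coordinate. A greedy search that merely avoids $W$ has no mechanism for steering clear of such $\sigma$, and a single application of $\CNatural$ does not supply one. So the conclusion $\Sort_2 \leqW \CNatural$ does not follow from your construction. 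The paper's proof avoids this by building \emph{both} $p$ and $c$ simultaneously and carrying out the actual diagonalization on the $\Sort_2$ side: temporary extensions of $c$ by blocks $(1-k)^m$ are used only to force bad sequences (via \thref{lem}) to commit, through $\Psi_0$, to an output of the form $0^j 1\cdots$; one then appends more than $j$ zeroes to $p$, rendering that commitment wrong. Since this is done infinitely often, $p$ has infinitely many zeroes, so $\Sort_2(p)=0^\omega$, and every extendible bad sequence is $\trianglelefteq$-below one that has been spoiled. The $\RT{1}{2}$-coordinate is never used for the contradiction itself, only as a lever.

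\textbf{Second approach.} The required embedding $\parallelization{\ACC_\mathbb{N}} \leqW \Sort_2 \times \RT{1}{2}$ fails outright: a $\Sort_2$-output is determined by a single element of $\mathbb{N}\cup\{\omega\}$, and the $\RT{1}{2}$-output by a single bit, so $\Sort_2 \times \RT{1}{2}$ cannot encode countably many independent $\ACC_\mathbb{N}$-answers. Your acknowledged ``delicate step'' is therefore not merely delicate but false, and the contradiction you aim for cannot be reached along this route.
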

\begin{proof}
	Observe first of all that $\BS$ is a cylinder, i.e., $\id \times \BS \leqsW \BS$ (this can be easily proved by adapting the proof of \cite[Proposition 4.6]{goh-pauly-valenti}). This means that it suffices to prove $\Sort_2 \times \RT{1}{2} \not\leqsW \BS$ (see \cite[11.3.5]{bgp21}).
    
	Assume towards a contradiction that the functionals $\Phi,\Psi_0,\Psi_1$ witness the strong reduction $\Sort_2 \times \RT{1}{2}\strongweireducible \BS$ (where $\Psi_0$ and $\Psi_1$ produce, respectively, the $\Sort_2$ and the $\RT{1}{2}$ answers). We build an instance $(p,c)$ of $\Sort_2 \times \RT{1}{2}$ in stages as follows. At each stage $s$, we have a finite string $\rho_s$, a finite coloring $\gamma_s$, and a set $D_s$ of finite bad sequences in the quasi-order $\preceq_s$ produced by $\Phi(\rho_s,\gamma_s)$ in $s$ steps. Intuitively, the set $D_s$ corresponds to the set of finite bad sequences that have already been forced to be non-extendible. Recall the quasi-order $\trianglelefteq$ (\thref{def:bad_seq_quasiorder}) on bad sequences.

	We start with $\rho_0 := \gamma_0 := \varepsilon$ and $D_0:=\emptyset$. At stage $s+1=2n+1$, we consider the set $M_{s+1}$ of all the $\trianglelefteq$-maximal bad sequences $\beta$ in $\preceq_{s+1}$ that are not in $D_s$ and such that 
	\begin{itemize}
		\item $\Psi_1(\beta)(0)\downarrow=k<2$
		\item $\Psi_0(\beta)$ did not commit to a finite number of zeroes for the $\Sort_2$ solution (i.e.\ the string produced by $\Psi_0(\beta)$ in $s+1$ steps is not of the form $0^i\concat 1 \sigma$ for any $i$ or $\sigma$).
	\end{itemize}
	For each such $\beta$, we search for a sufficiently large $m\in\mathbb{N}$ such that there exists a finite bad sequence $\alpha$ in $\Phi(\rho_s \concat 1^m, \gamma_s \concat (1-k)^m)$ such that $\beta \trianglelefteq \alpha$ and there is $i$ such that $0^i1 \prefix \Psi_0(\alpha)$ within $s+1+m$ steps. In other words, we extend our finite input of $\Sort_2$ and finite coloring $\gamma_s$ so as to force the forward functional to produce some finite bad sequence $\alpha \trianglerighteq \beta$ such that $\Psi_0(\alpha)$ commits to a finite number of zeroes for the $\Sort_2$ solution. Observe that this needs to happen because $\beta$ cannot be extendible in $\Phi(\rho_s\concat 1^\omega, \gamma_s\concat (1-k)^\omega)$ (it is producing the wrong $\RT{1}{2}$ answer) and hence, by \thref{lem}, there is a finite bad sequence $\alpha$ in $\Phi(\rho_s\concat 1^\omega, \gamma_s\concat (1-k)^\omega)$ as desired. We diagonalize against $\alpha$ by extending $\rho_s\concat 1^m$ with $(i+1)$-many zeroes. We then add $\alpha$ and every bad sequence $\trianglelefteq$-below $\alpha$ to $D_s$. This concludes the action needed for $\beta$. Once all the bad sequences in $M_{s+1}$ have received attention, we go to the next stage. 

	At stage $s+1=2n+2$, we consider the set $M_{s+1}$ of all the $\trianglelefteq$-maximal bad sequences $\beta$ in $\preceq_{s+1}$ that are not in $D_s$ and such that, for some $i$, $0^i1 \prefix \Psi_0(\beta)$. Let 
	\[j:= \max \{ i \st (\exists \beta \in M_{s+1})(0^i1\prefix \Psi_0(\beta)) \}+1.\]
	We can uniformly compute such $j$ because $M_{s+1}$ is finite. We define $\rho_{s+1}:=\rho_s\concat 0^j1$ and $\gamma_{s+1}:=\gamma_s\concat 0$. We also add the $\trianglelefteq$-downward closure of $M_{s+1}$ into $D_s$ and go to the next stage. 

	This concludes the construction. It is apparent that the strings $\rho_s$ and $\gamma_s$ are extended infinitely often, and therefore the pair $(p,c)$ with $p:= \bigcup_s \rho_s$ and $c:=\bigcup_s \gamma_s$ is a valid instance for $\Sort_2\times \RT{1}{2}$. Let $\preceq$ be the non-well quasi-order defined by $\Phi(p,c)$ and let $B\in \BS(\Phi(p,c))$. By continuity, let $n$ be such that $\Psi_1(B[n])\downarrow$ and let $s+1$ be the first stage by which every element of $B[n]$ enters $\preceq$. We show that there is a finite bad sequence $\alpha\trianglerighteq B[n]$ which produces a wrong answer for $\Sort_2\times \RT{1}{2}(p,c)$. This suffices to reach a contradiction, as, by \thref{lem}, $B[n]$ being extendible implies that $\alpha$ is extendible.
	
	Assume first that $\Psi_0(B[n])_{s+1}$ commits to finitely many zeroes. Let $t\ge s+1$ be even. Let $\alpha \trianglerighteq B[n]$ be $\trianglelefteq$-maximal in $\preceq_{t}$ such that, for some $i$, $0^i1\prefix\Psi_0(\alpha)_{t}$. By construction, at stage $t$ we are adding more than $i$-many zeroes in $p$, therefore $\Psi_0(\alpha)$ is not the prefix of $\Sort_2(p)$.

	If $\Psi_0(B[n])_{s+1}$ does not commit to finitely many zeroes, let $r\ge s+1$ be odd. Let $\beta \trianglerighteq B[n]$ be $\trianglelefteq$-maximal in $\preceq_{r}$ such that $\beta\in M_r$. At stage $r$, we forced the forward functional $\Phi$ to produce some $\alpha \trianglerighteq \beta$ such that, for some $i$, $0^i1\prefix\Psi_0(\alpha)$, and then extended $\rho_r$ by adding more than $i$-many zeroes. In other words,  $\Psi_0(\alpha)$ is not a prefix of $\Sort_2(p)$, and this concludes the proof.
\end{proof}

Observe that $\Sort_2$ is one of the weakest ``natural'' problems that require infinitely many mind-changes to be solved. The previous result shows that, roughly speaking, while the quotient of $\DS$ (resp.\ $\BS$) over $\RT{1}{2}$ is capable of solving every problem that can be solved with finitely-many mind changes (essentially because $\RT{1}{2}\times \RT{1}{\mathbb{N}}\weiequiv \RT{1}{\mathbb{N}}\weireducible \DS$, see \cite[Proposition 4.24]{goh-pauly-valenti}), allowing infinitely-many mind changes constitutes a critical obstacle. This leads to the following open problem:

\begin{question}
	Characterize $\DS/\RT{1}{2}$ and $\BS/\RT{1}{2}$.
\end{question}
	
In an attempt to better understand the stability of $\DS$ and $\BS$ under parallel products, we highlight also the following open question:

\begin{question}
	What are the degrees of $\DS/\DS$ and $\BS/\BS$?
\end{question}

Observe that, as a consequence of \thref{thm:DSxCN=DS} and \thref{thm:sort2xRT12_not_le_BS}, $\CNatural\weireducible \DS/\DS \leW \mflim$. It is therefore natural to ask whether $\DS/\DS \weireducible \CNatural$. A possible way to separate $\DS/\DS$ and $\CNatural$ would be showing that $\DS$ is equivalent to its restriction to inputs $L$ with no $L$-computable solution. This can be rephrased as asking whether $\NON\times \DS \weireducible \DS$, where $\NON(p):=\{q\in \Baire \st q\not\turingreducible p\}$. The same observations can be made by substituting $\DS$ with $\BS$.

\bibliographystyle{mbibstyle}
\bibliography{references}

\end{document}